\documentclass[11pt,oneside,reqno]{amsart}
\usepackage{geometry}
\geometry{textheight = 9in, lmargin=0.8in,rmargin=0.8in}
\setlength{\footskip}{0.4in}

\pagestyle{plain}
\usepackage[utf8]{inputenc}
\usepackage{amstext,latexsym,amsbsy,amsmath,amssymb,amsthm,mathtools,relsize,geometry,enumerate}
\usepackage{hyperref}
\usepackage[shortlabels]{enumitem}

\hypersetup{
  colorlinks   = true, 
  urlcolor     = blue, 
  linkcolor    = red, 
  citecolor   = red 
}
\usepackage{mathtools,enumerate,enumitem}

\usepackage[capitalize,nameinlink,noabbrev]{cleveref}

\usepackage{graphicx}
\usepackage{epstopdf}
\setkeys{Gin}{width=\linewidth,totalheight=\textheight,keepaspectratio}
\graphicspath{{./images/}}

\usepackage{multicol}
\usepackage{booktabs}
\usepackage{mathrsfs}
\usepackage{color}

\newcommand{\nbb}{\mathbb{N}}
\newcommand{\rbb}{\mathbb{R}}
\newcommand{\zbb}{\mathbb{Z}}

\renewcommand{\L}{\mathcal{L}}
\newcommand{\Lhat}{\widehat{\L}}

\renewcommand{\H}{\mathcal{H}}
\newcommand{\Pcal}{\mathcal{P}}
\newcommand{\B}{\mathcal{B}}
\newcommand{\la}{\langle}
\newcommand{\ra}{\rangle}

\newcommand{\grad}{\nabla}

\newcommand{\Hzeromu}{\mathcal{H}^0_\mu}
\newcommand{\Honemu}{\mathcal{H}^1_\mu}
\newcommand{\Htwomu}{\mathcal{H}^2_\mu}
\newcommand{\Hthreemu}{\mathcal{H}^3_\mu}
\newcommand{\Hmmu}{\mathcal{H}^m_\mu}

\newcommand{\Mzeromu}{M^0_\mu}
\newcommand{\Monemu}{M^1_\mu}
\newcommand{\Mtwomu}{M^2_\mu}
\newcommand{\Mthreemu}{M^3_\mu}

\newcommand{\Mmmu}{M^m_\mu}

\newcommand{\Mdelta}{\mathcal{M}_\delta}

\newcommand{\numu}{\nu^\mu}

\newcommand{\etatilde}{\tilde{\eta}}

\newcommand{\uhat}{\widehat{u}}
\newcommand{\etahat}{\widehat{\eta}}

\newcommand{\De}{\Delta}
\newcommand{\lap}{\De}

\newcommand{\x}{U_0}

\newcommand{\bdy}{\partial}

\newcommand{\mi}{\wedge}

\newcommand{\Tr}{\text{Tr}}

\renewcommand{\d}{\text{d}}

\newcommand{\domain}{\mathcal{O}}

\newcommand{\f}{\varphi}

\newcommand{\E}{\mathbb{E}}
\newcommand{\Ecal}{E}
\renewcommand{\P}{\mathbb{P}}

\newcommand{\textred}[1]{\textcolor{red}{#1}}

\newcommand{\Z}{\mathcal{Z}}
\newcommand{\T}{\mathbb{T}}
\newcommand{\Tcal}{\mathcal{T}}
\newcommand{\dom}{\text{Dom}}

\newcommand{\nhat}{\hat{n}}

\newcommand{\close}{\!\!\!}

\theoremstyle{plain}
\newtheorem{theorem}{Theorem}[section]

\newtheorem{lemma}[theorem]{Lemma}

\newtheorem{definition}[theorem]{Definition}

\theoremstyle{definition}

\newtheorem{remark}[theorem]{Remark}

\usepackage[pagewise]{lineno}


\numberwithin{equation}{section}

\title{Existence and higher regularity of statistically steady states for the stochastic Coleman-Gurtin equation}

\author{Nathan E.~ Glatt-Holtz$^1$, Vincent R. Martinez$^{2}$ and Hung D.~Nguyen$^3$}

\address{$^1$ Department of Statistics, Indiana University, Bloomington, IN, USA}
\address{$^2$ Department of Mathematics \& Statistics, CUNY Hunter College, Department of Mathematics, CUNY Graduate Center, New York, NY, USA}
\address{$^3$ Department of Mathematics, University of Tennessee, Knoxville, TN, USA}

\begin{document}

\maketitle

\begin{abstract}
We study a class of semi-linear differential Volterra equations with polynomial-type potentials that incorporates the effects of memory while being subjected to random perturbations via an additive Gaussian noise. We show that for a broad class of non-linear potentials, the system always admits invariant probability measures. However, the presence of memory effects precludes access to compactness in a typical fashion. In this paper, this obstacle is overcome by introducing functional spaces adapted to the memory kernels, thereby allowing one to recover compactness. Under the assumption of sufficiently smooth noise, it is then shown that the statistically stationary states possess higher-order regularity properties dictated by the structure of the nonlinearity. This is established through a control argument that asymptotically transfers regularity onto the solution by exploiting the underlying Lyapunov structure of the system in a novel way.
\end{abstract}


\section{Introduction}

\label{sec:intro}
Let $\domain\subset \rbb^d$ be a bounded open domain with smooth boundary, {where $d\geq1$}. We are interested in the following semilinear stochastic Volterra equation, which is written in non-dimensional variables:
\begin{equation} \label{eqn:react-diff:K}
\begin{aligned}
&\d u(t)=\kappa\Delta u(t)\d t +(1-\kappa)\int_{0}^{\infty}\close K(s)\Delta u(t-s)\d s\d t+\f(u(t))\d t+Q\d w(t),\\
&u(t)\big|_{\partial\domain}=0,\qquad u|_{(-\infty,0]}=u_0.
\end{aligned}
\end{equation}
This equation was introduced to describe the {evolution} of a {scalar} field $u(t)=u(t,x):[0,\infty)\times \domain\to\rbb$, such as heat, in a viscoelastic medium. On the right--hand side of~\eqref{eqn:react-diff:K}, $\f:\rbb\to\rbb$ {represents the potential, typically given as a polynomial nonlinearity that satisfies} certain dissipative conditions, $Q\d w(t)$ is a Gaussian process which is delta correlated (white) in time and {whose spatial correlation is characterized by the operator $Q$},
$\kappa\in(0,1)$ defines the relative contribution of memory terms, and $K:\rbb^+\to\rbb^+$, where $\rbb^+=[0,\infty)$, denotes a memory kernel, which regulates the extent to which the past can affect the present; it is assumed to be a smooth function of exponential-type that satisfies
\begin{equation} \label{cond:K:exponential}
K'+\delta K\leq 0,
\end{equation}
for some constant $\delta>0$.

In the absence of stochastic forcing, that is, when $Q\equiv 0$, there are many works investigating the large-time behavior of \eqref{eqn:react-diff:K} by means of deterministic global attractors  \cite{conti2005singular,conti2006singular,
gatti2005memory,
gatti2004exponential}. On the other hand, when noise is present, the theory of random attractors was investigated in the works \cite{caraballo2007existence,caraballo2008pullback}. Notably, the results in \cite{conti2005singular,conti2006singular} establish that the Hausdorff dimension of the global attractor is dictated by the smoothness of the nonlinearity. In contrast, the investigation of the statistically steady states of the stochastic system \eqref{eqn:react-diff:K}, {as represented by the invariant probability measures corresponding to the Markovian dynamics of its history-augmented system (see \eqref{eqn:react-diff:mu}),} seemed to receive less attention. {Recently, in \cite{glatt2022short}, under the assumption that the system is directly excited by random forcing on a sufficiently large subspace of the component of phase space corresponding only to the temperature of the history-augmented system,} stationary solutions of \eqref{eqn:react-diff:K} are shown to be good approximations of the {stationary solutions of the} corresponding stochastic nonlinear heat equation, that is, without memory. However, in the absence of a condition on the number of directions which are directly forced, the issue of the existence of statistically steady states nevertheless remains a challenge, owing to the hyperbolic nature of system's capacity to retain past information.

The main goal of the work is thus two-fold. First, within a broad class of nonlinear potentials, we demonstrate that the system always admits invariant probability measures. Secondly, we seek to establish a relation between the support of the invariant measures and the regularities of the noise and nonlinear potentials. In what follows, we provide a detailed overview of the main mathematical results of the article.

\subsection{Overview of the main results} \label{sec:intro:results}
 We note that the process $u(t)$ governed by \eqref{eqn:react-diff:K} is not Markovian owing to the presence of memory effects. It is therefore not clear what is meant by an invariant probability measure for the system \eqref{eqn:react-diff:K} at this stage. In order to make sense of statistically steady states, we augment the original process by an auxiliary variable such that Markovianity is recovered. This framework was originally introduced in \cite{dafermos1970asymptotic} in the study of long-time behavior of a viscoelastic system and later exploited in \cite{conti2006singular} for studying singular limits of systems with memory. The main idea is to introduce a ``history variable", $\eta(t,s)$ that is subsequently appended to the original process, $u(t)$, to form a joint process $(u(t),\eta(t,s))$ that is Markovian in an extended phase space. More precisely, we introduce the \emph{integrated past history} of $u(t)$ defined by
\begin{equation} \label{form:eta}
\eta(t,s):=\int_0^s u(t-r)\d r,\quad s,t {\ \geq\ } 0.
\end{equation}
Observe that $\eta$ satisfies the following inhomogeneous transport equation
\begin{align*}
\partial_t\eta(t,s)=-\partial_s\eta(t,s)+u(t).
\end{align*}
To see the role of $\eta$ in \eqref{eqn:react-diff:K}, set
\begin{align}\label{def:mu:K}
\mu(s):=-K'(s).
\end{align}
Upon integrating by parts with respect to $s$ in the memory term appearing in \eqref{eqn:react-diff:K}, then invoking \eqref{form:eta}, one obtains
\begin{equation} \label{eqn:integration-by-part}
\int_{0}^{\infty}\close K (s)\Delta u(t-s)\d s=\int_{0}^{\infty}\close \mu(s)\Delta\eta(t,s)\d s.
\end{equation}
This allows one to rewrite \eqref{eqn:react-diff:K} as the following extended system, whose dynamics are now Markovian:
\begin{equation} \label{eqn:react-diff:mu:original}
\begin{aligned}
{\d} u(t)&=\kappa\Delta u(t){\d t}+(1-\kappa)\int_0^\infty\close \mu(s)\Delta\eta(t,s)\d s {\d t} +\f(u(t))\d t+Q{\d} w(t),\\
\quad
\partial_t \eta(t,s)&=-\partial_s\eta(t,s)+u(t),\quad  \\
u(t)\big|_{\partial\domain}&=0,\quad\eta(t,s)|_{\partial \domain}=0,\\
u(0)&=u_0 \text{ in }\domain,\quad \eta(0;s)=\eta_0(s).
\end{aligned}
\end{equation}
The precise details of the phase space for the extended variable $(u,\eta)$ are provided in Section \ref{sect:notation}.

The main result of the paper is the existence of invariant probability measures associated to the extended system \eqref{eqn:react-diff:mu:original} and the property that their supports belong to spaces of regularity stronger than that of the natural phase space of the system.

\begin{theorem} \label{thm:regularity:pseudo}
{Suppose that $K$ satisfies~\eqref{cond:K:exponential}, that the noise has a sufficiently smooth spatial correlation structure, and that $\f$ grows at most algebraically, that is, $\f$ satisfies}
\begin{align*}
    |\f(x)|\le c|x|^{p}+C,\quad \text{for all}\ x\in \rbb,
\end{align*}
for some positive constants $c,C,p$. Then under certain dissipativity conditions on $\f$, the extended system \eqref{eqn:react-diff:mu:original} possesses at least one invariant probability measure $\nu$. Furthermore, {when $d\le 3$ and $p<5$}, the support of any invariant probability measure $\nu$ consists of functions as smooth as $\f$ and $Q$ allow.
\end{theorem}

We emphasize that invariance refers to the invariance of $\nu$ with respect to the Markov semigroup associated to extended system \eqref{eqn:react-diff:mu:original}. We refer the reader to Theorem \ref{thm:existence} and Theorem \ref{thm:regularity} for the more precise version of Theorem \ref{thm:regularity:pseudo}. We note that the conditions imposed on the potential, $\f$, in Theorem \ref{thm:regularity:pseudo} apply to a broad class of polynomial nonlinearities, one of which is the well-known Allen-Cahn cubic potential, $\f(x)=x-x^3$.

\subsection{Previous related literature and methodology of proofs}

Stochastic differential equations with memory were studied as early as the seminal work \cite{ito1964stationary}. Since then, there have been many works concerning the theory of well-posedness {for} infinite-dimensional systems with memory {such as} \cite{barbu1975nonlinear,barbu52nonlinear,barbu1979existence,
bonaccorsi2004large,bonaccorsi2006infinite,clement1996some,clement1997white}. The existence of statistically steady states in {this} context {was addressed} for several {systems}, such as {the} stochastic Volterra equations \cite{clement1998white}, {the} Navier-Stokes equation \cite{weinan2001gibbsian}, {and the} Ginzburg–Landau equation \cite{weinan2002gibbsian}, while the {issue of uniqueness was studied in} \cite{bakhtin2005stationary,bonaccorsi2012asymptotic,
weinan2002gibbsian,weinan2001gibbsian,
glatt2020generalized,hairer2011asymptotic}. Particularly, in the work of \cite{bonaccorsi2012asymptotic} and \cite{nguyen2022ergodicity}, ergodicity of a reaction-diffusion equation with memory effects similar to \eqref{eqn:react-diff:K} was investigated. The notable difference between the system considered there (see \cite[Equation (1.1)]{bonaccorsi2012asymptotic}) from the one considered in the current article is the presence, {in \cite[Equation (1.1)]{bonaccorsi2012asymptotic}}, of a \textit{negative} convolution integral and a smallness assumption that imposes that the maximum amplitude of $K$ be dominated by the viscous drag coefficient; {in our notation, this corresponds to replacing $+(1-\kappa)K(s)$ with $-(1-\kappa)K(s)$ and assuming $K(t)<\frac{\kappa}{1-\kappa}$, respectively.} By employing an approach from \cite{engel2001one,miller1974linear}, the system considered in \cite{bonaccorsi2012asymptotic, nguyen2022ergodicity} could then be rewritten as an abstract Cauchy system similar to~\eqref{eqn:react-diff:mu:original} on product spaces. It was further shown that the system possessed unique ergodicity and that it was strongly mixing. In a companion paper \cite{glatt2022short} to the present article, the large-time behavior and singular limit of the system \eqref{eqn:react-diff:mu:original} was investigated. It was shown there that under the assumption that the noise excited sufficiently many directions in phase space \cite[Condition Q3]{glatt2022short},  \eqref{eqn:react-diff:mu:original} admitted exactly one invariant probability measure and that the system was exponentially mixing. It is important to point out that such a hypothesis is not assumed here for the purpose of establishing existence of invariant probability measures. Hence, the results of this article are complementary to those obtained in \cite{glatt2022short}. In fact, for a very general set of conditions on the noise, we will demonstrate that \eqref{eqn:react-diff:mu:original} always admits invariant probability measures whose supports consist of functions with smoothness intimately related to that of the stochastic forcing and the potential present in the system.

Turning to~\eqref{eqn:react-diff:K}, the extended phase space approach for studying~\eqref{eqn:react-diff:K} as manifested in the system ~\eqref{eqn:react-diff:mu:original} was introduced in the work \cite{dafermos1970asymptotic} and later popularized for many partial differential equations (PDEs) \cite{conti2005singular,conti2006singular,gatti2004exponential,
gatti2005memory} as well as stochastic PDEs (SPDEs) \cite{caraballo2007existence,caraballo2008pullback,li2019asymptotic,
liu2017well,liu2019asymptotic,
shangguan2024geometric,shu2020asymptotic}. Such a Markovian approach can also be found in SDEs, e.g., in Langevin dynamics with memory \cite{glatt2020generalized,
ottobre2011asymptotic,pavliotis2014stochastic}. As mentioned in \cite{glatt2022short}, the advantage of this method is that one may rewrite the noiseless counterpart of \eqref{eqn:react-diff:mu:original} as an autonomous system of evolution equations on product spaces (see \eqref{eqn:react-diff:mu} below). In turn, this provides one access to the classical Markovian framework to study statistically invariant structures of~\eqref{eqn:react-diff:mu:original}. On the other hand, the analytic trade-off in introducing the history variable, $\eta$, can be found in the hyperbolic structure of its governing evolution equation, which subsequently precludes access to compactness in the traditional way. We show that we can nevertheless overcome this difficulty by establishing sufficient regularity of the solution in the proper functional setting.

A crucial property that is leveraged in this article, as well as in many others \cite{bonaccorsi2012asymptotic,clement1997white,
glatt2022short, ottobre2011asymptotic,pavliotis2014stochastic}, is the assumption that the memory kernel decays exponentially into the past (see \eqref{cond:K:exponential} and \nameref{cond:mu}). For stochastic equations with memory decaying non-exponentially, e.g., sub--exponential or power-law, we refer the reader to \cite{baeumer2015existence, desch2011p,
 glatt2020generalized, nguyen2018small}. We emphasize that in the regime of exponential-type memory kernels that are considered in this article, the results we establish are able to accommodate a wider class of nonlinear potentials $\f$ with minimal growth conditions, and are compatible with results regarding their deterministic counterpart.

 Recall that {the} main result of the paper is to establish the existence and regularity of {invariant probability measures} of \eqref{eqn:react-diff:mu:original} (see Theorem \ref{thm:regularity:pseudo}). In particular, we show that the system \eqref{eqn:react-diff:mu:original} admits at least {one} invariant {probability} measure $\nu$. Furthermore, in spatial dimensions {physical dimensions} $d\le 3$, {we show these invariant probabilities are in fact supported in spaces of higher regularity relative to that of the phase space}. The strategy that we employ for establishing existence is the classical Krylov-Bogoliubov procedure, which invokes tightness for a sequence of time-average measures. {However, as mentioned in the preceding paragraph, due to the lack of compactness from the memory variables, we resort to the techniques developed \cite{conti2005singular,conti2006singular} to effectively recover it.}

 On the other hand, the regularity of the invariant probabilities is {established} through a series of bootstrap {arguments that carefully exploits the smoothing property} of solutions inherent to the system, coupled with a control strategy, originally introduced in \cite{glatt2021long} and developed further in \cite{nguyen2022ergodicity}, which asymptotically guides solutions towards a region of phase space whose spatial regularity is regulated by the spatial regularity possessed by the external driving forces. {The bootstrap argument furnishes higher-order moment bounds by leveraging control over lower-order moment bounds. This manifests explicitly in our analysis by establishing a recursive relation on Lyapunov-type functionals corresponding to higher Sobolev-type norms in terms of those corresponding to lower-order Sobolev-type norms.} The rough idea {behind the control argument, on the other hand,} is as follows: denote an invariant probability measure of \eqref{eqn:react-diff:mu:original} by $\nu^\mu$ and the ``controlled process" by $(\uhat,\etahat)$. Let $|\cdotp|$ denote the norm of the extended phase space and $\|\cdotp\|$ denote a stronger norm of a subspace of the extended phase space. Suppose that the controlled process $(\uhat,\etahat)$, with \textit{smooth initial value} $(\uhat_0,\etahat_0)$, possesses time-uniform bounds in expectation with respect to the stronger norm $\|\cdotp\|$, and moreover, has the property that $(\uhat,\etahat)$ asymptotically converges to $(u,\eta)$ with respect to the weaker norm $|\cdotp|$, in expectation, for \textit{any initial data}. Lastly, let $P_N$ denote the $N$-dimensional Galerkin projection. Then, proceeding heuristically, by invariance of $\nu^\mu$ one may argue
    \begin{align*}
        \int\|(P_Nu_0,P_N\eta_0)\|d\nu^\mu&= \int\E\|(P_Nu(t;u_0),P_N\eta(t;\eta_0)\|d\nu^\mu\notag
        \\
        &\leq C_{N}\int\E|(u(t;u_0)-\uhat(t;\uhat_0),\eta(t,\eta_0)-\etahat(t;\etahat_0)|d\nu^\mu\\
        &\quad\ +\int\E\|(\uhat(t;\uhat_0),\etahat(t;\etahat_0))\|d\nu^\mu.\notag
    \end{align*}
The first integral converges to $0$ as $t\rightarrow\infty$, while the second integral is bounded uniformly in time. Since one may always choose $t$ accordingly to $N$, one obtains
    \begin{align}
        \sup_N\int\|(P_Nu_0,P_N\eta_0)\|d\nu^\mu<\infty,\notag
    \end{align}
from which one may then deduce that $\nu^\mu$ is supported in the smoother subspace. We explicitly construct a control with the above properties in Section \ref{sec:regularity} below. The key idea is introduce a control which enforces convergence in the phase space, but also retains the Lyapunov structure of the original system. Ultimately, the control we introduce is an affine perturbation of the original system that enforces convergence only on a sufficiently large finite-dimensional subspace. Thus, our control strategy is designed in such a way that it effectively transfers the regularity of the controlled process to the solution in the time-asymptotic limit. {From this perspective, the regularity result of the current article} can also be considered as a stochastic analogue of results in deterministic settings regarding the regularity of the global attractor \cite{bonaccorsi2006infinite}. These results are stated precisely in Theorem \ref{thm:existence} and Theorem \ref{thm:regularity} below. Their proofs are provided in Section \ref{sec:existence} and Section \ref{sec:regularity}, respectively.

\subsection{Organization of the paper}

The rest of the paper is organized as follows: in Section \ref{sec:main-result}, we {review} the {precise} functional setting {that we work in}. Particularly, we will {formulate} \eqref{eqn:react-diff:mu:original} as an abstract Cauchy {problem} \eqref{eqn:react-diff:mu} on an appropriate product space. We also identify the main assumptions that we make on the memory, the nonlinear potentials and the noise structure. Then, we {state} our main results in this section, including Theorem \ref{thm:existence} on the existence and Theorem \ref{thm:regularity} on {the} regularity of invariant probability measures. In Section \ref{sec:apriori-moment-estimate}, we perform a priori moment bounds on the solutions {that ultimately ensures the existence and regularity of the invariant probability measures}. In Section \ref{sec:existence} and Section \ref{sec:regularity}, respectively, we prove the main results of the paper, regarding the existence and regularity of an invariant probability. The paper concludes with two appendices, Section \ref{sec:well-posed} and Section \ref{sec:auxiliary-result}. In Section \ref{sec:well-posed}, we supply the details for the construction of unique pathwise solutions via the standard Galerkin approximation to \eqref{eqn:react-diff:mu:original} while in Section \ref{sec:auxiliary-result}, we collect several {technical} auxiliary results that are invoked in proving the regularity of invariant probability measures.

\section{Assumptions and statements of main results}\label{sec:main-result}

In this section, we state our main results and detail the various structural assumptions we impose on \eqref{eqn:react-diff:mu} for them. In Section \ref{sect:notation}, we review the functional settings and the phase spaces for \eqref{eqn:react-diff:mu:original} \cite{conti2005singular,conti2006singular,
glatt2022short}. In Section \ref{sect:well-posed}, we state the well-posedness of the system \eqref{eqn:react-diff:mu} through Theorem \ref{thm:well-posed}. In Section \ref{sect:exist}, we state the existence of invariant probabilitym measures through Theorem \ref{thm:existence} and the regularity of the support of these invariant probability measures through Theorem \ref{thm:regularity}.

\subsection{Functional setting}\label{sect:notation}

Given a bounded open set $\domain$ in $\rbb^d$ with smooth boundary, we let $L^p(\domain)$, for $1\leq p\leq\infty$ denote the usual Lebesgue spaces.  In the particular case $p=2$, we let $H=L^2(\domain)$. We denote the {corresponding} inner product and norm in $H$ by $\langle\,\cdot\,,\,\cdot\,\rangle_H$ and $\|\cdot\|_H$, {respectively}.

Let $A$ denote the (negative) Dirichlet Laplacian operator, {$-\De_D$}. It is well-known that there exists a complete orthonormal basis $\{e_k\}_{k\ge 1}$ in $H$ that diagonalizes $A$, i.e., there exists a positive sequence $0<\alpha_1<\alpha_2<\dots$ diverging to infinity such that
\begin{equation} \label{eqn:Ae_k=-alpha.e_k}
Ae_k=\alpha_k e_k, \quad k\ge 1.
\end{equation}

More generally, for each $r\in\rbb$, we denote by $H^r$, the domain of $A^{r/2}$ endowed with the inner product, i.e., $H^r:=D(A^{r/2})$, (see \cite{cerrai2020convergence,conti2006singular}). Then the corresponding inner product is defined by
$$\langle u,v\rangle_{H^r}=\sum_{k\ge 1}\alpha_k^{r}\la u,e_k\ra_H\la v,e_k\ra_H,$$
so that the corresponding induced norm is given by
$$\|u\|_{H^r}^2=\sum_{k\geq 1}\alpha_k^{r}\langle u,e_k\rangle^2_H.$$

Next, we {introduce} the notion of extended phase spaces {that was developed in} \cite{
conti2005singular,conti2006singular,
dafermos1970asymptotic}. {This will establish the formal framework in which we construct solutions to} \eqref{eqn:react-diff:mu:original}. First, given a memory kernel $\mu:[0,\infty)\to[0,\infty)$, we define the following weighted Hilbert spaces
\begin{equation}
M^\beta_{\mu}=L^2_{\mu}([0,\infty);H^{\beta+1}), \qquad \beta\in\rbb,
\end{equation}
endowed with the inner product
\begin{align}\label{def:Mnorm}
\langle \eta_1,\eta_2\rangle_{M^\beta_\mu}=\int_0^\infty\close \mu(s)\langle A^{(1+\beta)/2}\eta_1(s),A^{(1+\beta)/2}\eta_2(s)\rangle_H\d s.
\end{align}
It is important to note that while the usual embedding $H^{\beta_1}\subset H^{\beta_2}$, $\beta_1>\beta_2$, is compact, the embedding $M^{\beta_1}_\mu\subset M^{\beta_2}_\mu$ is only continuous \cite{pata2001attractors}. In order to establish existence of invariant probability measures, this defect will require us to introduce additional spaces, which we develop now.

Next, let $\Tcal_\mu$ be the operator on $\Mzeromu$ defined by
\begin{align} \label{form:Tcal}
\Tcal_\mu\eta := -\partial_s\eta,\qquad\dom(\Tcal_\mu) =\{\eta\in\Mzeromu:\partial_s\eta\in \Mzeromu,\eta(0)=0\},
\end{align}
where $\partial_s$ is the derivative in the distribution sense. In other words, $\Tcal_\mu$ is the infinitesimal generator of the right-translation semigroup acting on $\Mzeromu$ \cite[Theorem 3.1]{grasselli2002uniform}. Furthermore, if $u\in L^1_{\text{loc}}([0,\infty);H^1)$ then the following functional formulation of the Cauchy initial-value problem
\begin{equation} \label{eqn:eta:Cauchy-problem}
\left\{
\begin{aligned}
\frac{\d}{\d t}\eta(t)&=\Tcal_\mu\eta(t)+u(t),\\
\eta(0)&=\eta_0\in\Mzeromu,
\end{aligned}
\right.
\end{equation}
has a unique solution $\eta\in C([0,\infty);\Mzeromu)$ with the following representation: \cite{conti2006singular,grasselli2002uniform}
\begin{equation} \label{form:eta(t)-representation}
\eta(t,s;\eta_0)=\begin{cases} \int_0^s u(t-r)\d r,& 0<s\le t,\\
\eta_0(s-t)+\int_0^t u(t-r)\d r,& s>t.
\end{cases}
\end{equation}
The above explicit formula will be helpful when we show the existence of invariant probability measures in Theorem \ref{thm:existence} below. Another significance of $\Tcal_\mu$ is the following useful estimate \cite[Theorem 3.1]{grasselli2002uniform} that will be employed throughout {our analysis below}: for $\eta\in\dom(\Tcal_\mu)$, observe that integrating by parts gives
\begin{align}
\la \Tcal_\mu\eta,\eta\ra_{\Mzeromu}&=-\frac{1}{2}\int_0^\infty\close \mu(s)\partial_s\|A^{1/2}\eta(s)\|^2_H\d s\notag \\
&=\frac{1}{2}\int_0^\infty\close \mu'(s)\|A^{1/2}\eta(s)\|^2_H\d s\notag \\
&\le -\frac{1}{2}\delta\|\eta\|_{\Mzeromu}^2. \label{ineq:<T.eta,eta>}
\end{align}
Similarly, we may derive the following bound in $M^\beta_\mu$ for any $\beta\in\rbb$
\begin{equation} \label{ineq:<T.eta,eta>_(M^n)}
\begin{aligned}
\la \Tcal_\mu\eta,\eta\ra_{M^\beta_\mu}\le  -\frac{1}{2}\delta\|\eta\|_{M^\beta_\mu}^2.
\end{aligned}
\end{equation}

Furthermore, given $\eta\in \Mzeromu$, we introduce the {\it tail function}
\begin{equation}\label{form:tailfunction}
\T_\eta^\mu(r) = \int_{(0,\frac{1}{r})\cup(r,\infty)}\close\close\close\|A^{1/2}\eta(s)\|_H^2\mu(s)\d s,\qquad r\geq 1.
\end{equation}
Then we define the Banach space for $\beta\in\rbb$
\begin{equation} \label{form:space:L}
\mathcal{E}^\beta_\mu =\{\eta\in M^\beta_\mu:\eta\in\dom(\Tcal_\mu),\quad\sup_{r\geq 1}r\T_\eta^\mu(r)<\infty\},
\end{equation}
with the norm {defined by}
\begin{equation} \label{form:space:L:norm}
\|\eta\|_{\Ecal^{\beta}_\mu}^2 = \|\eta\|_{M^\beta_\mu}^2+\|\Tcal_\mu\eta\|^2_{\Mzeromu}+\sup_{r\geq 1}r\T_\eta^\mu(r).
\end{equation}

Having introduced these ``memory spaces", we define for $\beta\in\rbb$ the Banach spaces
\begin{equation} \label{form:space:H_epsilon}
\H^\beta_\mu=H^\beta\times M^\beta_\mu,\qquad \Z^\beta_\mu=H^\beta\times \Ecal^\beta_\mu.
\end{equation}
It was shown in \cite{conti2006singular, gatti2004exponential,joseph1989heat,joseph1990heat, pata2001attractors} that $\Ecal^\beta_\mu$ is compactly embedded into $M^0_\mu$ for $\beta>0$, (see \cite[Lemma 3.1]{conti2006singular} and \cite[Lemma 5.5]{pata2001attractors}) so that any bounded set in $\Z^\beta_\mu$ is totally bounded in $\H^\beta_\mu$. Moreover, for $(u,\eta)\in \H^\beta_\mu$ (or $\Z^\beta_\mu$), we denote by $\pi_i,\,i=1,2,$ the projection on marginal spaces, namely
$$\pi_1(u,\eta)=u,\qquad \pi_2(u,\eta)=\eta.$$
Also, for $n\ge 1$, we denote by $P_n$ the projection of $(u,\eta)$ onto the {subspace spanned by the first $n$ eigenfunctions $e_k$}:
\begin{align} \label{form:P_Nu}
P_nu=\sum_{k=1}^n\la u,e_k\ra_He_k,\quad\text{and}\quad P_n\eta(s) =\sum_{k=1}^n\la \eta(s),e_k\ra_H e_k.
\end{align}

Finally, we may now recast \eqref{eqn:react-diff:mu:original} as follows:
\begin{align}
\d u(t)&=-\kappa A u(t)\d t-(1-\kappa)\int_0^\infty\close \mu(s)A\eta(t,s)\d s \d t+\f(u(t))\d t+Q\d  w(t),\notag \\
\frac{\d}{\d t}\eta(t)&= \Tcal_\mu\eta(t)+u(t), \notag \\
( u(0),\eta(0))&=(u_0,\eta_0)\in\Hzeromu. \label{eqn:react-diff:mu}
\end{align}

\subsection{Well-posedness}\label{sect:well-posed}
In this subsection, we discuss the well-posedness of \eqref{eqn:react-diff:mu}. We first start with the condition on the memory kernel $\mu$.
\subsection*{(M1)}\label{cond:mu} Let $\mu\in C^1([0,\infty))$ be a positive function such that
\begin{equation} \label{ineq:mu}
\mu'+\delta\mu\le 0,
\end{equation}
for some $\delta>0$.

{Regarding the noise, we assume that} $w(t)$ is a cylindrical Wiener process on $H$, {whose decomposition is given by}
$$w(t)=\sum_{k\ge 1}e_kB_k(t),$$
where $\{e_k\}_{k\ge 1}$ is the orthonormal basis of $H$ as in \eqref{eqn:Ae_k=-alpha.e_k} and $\{B_k(t)\}_{k\ge 1}$ is a sequence of independent {standard} one-dimensional Brownian motions, each defined on the same stochastic basis $\mathcal{S}=(\Omega, \mathcal{F},\{\mathcal{F}_t\}_{t\ge 0},\P)$ \cite{karatzas2012brownian}. Concerning the linear operator $Q$, we impose the following assumption \cite{bonaccorsi2012asymptotic,cerrai2020convergence,da2014stochastic,glatt2017unique}:
\subsection*{(Q1)}\label{cond:Q}
$Q:H\to H$ is a symmetric, non-negative, bounded linear map {such that}
\begin{align*}
\Tr(QAQ)<\infty,\quad \text{and}\quad \sup_{x\in\domain}\sum_{k\ge 1}|Qe_k(x)|^2<\infty.
\end{align*}
{In the above, we recall that $\Tr(QAQ)=\sum_{k\ge 1}\la QAQe_k,e_k\ra_H =\sum_{k\ge 1}\|Qe_k\|^2_{H^1}$.}

\begin{remark} We note that condition $ \sup_{x\in\domain}\sum_{k\ge 1}|Qe_k(x)|^2<\infty$ is required for the well--posedness \cite{bonaccorsi2012asymptotic,da2014stochastic}. We do not explicitly make use of this condition for the large--time asymptotic analysis of~\eqref{eqn:react-diff:mu}.

\end{remark}

Finally, concerning the {potential, $\f:\rbb\to\rbb$}, we impose the following  conditions:

\subsection*{(P0)}\label{cond:phi} $\f\in C^1$ satisfies $\f(0)=0$.

\subsection*{(P1)}\label{cond:phi:1} There exist positive constants $a_1$ and $p_0>1$ such that for all $x\in\rbb$,
$$ |\f(x)|\le a_1(1+|x|^{p_0}).$$

\subsection*{(P2)}\label{cond:phi:2} There exist positive constants $a_2,a_3$ such that for all $x\in\rbb$,
$$x\f(x)\le -a_2|x|^{p_0+1}+a_3,$$
where $p_0$ is the same constant from \textbf{(P1)}.

\subsection*{(P3)}\label{cond:phi:3} The derivative $\f'$ satisfies
$$\sup_{x\in\rbb}\f'(x)=:a_\f<\infty.$$

Fixing a stochastic basis $\mathcal{S}=(\Omega, \mathcal{F},\{\mathcal{F}_t\}_{t\ge 0},\P)$, let us now state what we mean by a ``weak solution" of \eqref{eqn:react-diff:mu}  (see \cite{glatt2008stochastic}).
\begin{definition} \label{defn:mild-soln}
Given initial condition $(u_0,\eta_0)\in \Hzeromu$, a process $U(\cdot)=\big(u(\cdot),\eta(\cdot)\big)$ is called a weak solution of \eqref{eqn:react-diff:mu} if $u(\cdotp)$ is $\mathcal{F}_t$-adapted and there exists $q\geq1$ such that $\P$--a.s. one has
\begin{align*}
u\in C_{w}([0,\infty);H)\cap L^2_{\emph{loc}}([0,\infty);H^1), \quad \eta\in C({[0,\infty)};\Mzeromu),
\end{align*}
and
\begin{align*}
\f(u)\in L^q_{\emph{loc}}([0,\infty);L^q(\domain)),
\end{align*}
{where $C_{w}([0,\infty);H)$ denotes the space of functions that are weakly continuous in $H$ with respect to $t$. Moreover,} for $\P$--a.s.
\begin{align*}
\la u(t),v\ra_H&=\la u_0,v\ra_H  -\kappa\int_0^t\la u(r),v\ra_{H^1} \emph{d} r-(1-\kappa)\int_0^t\la \eta(r),v\ra_{\Mzeromu}\\
&\quad+\int_0^t\la \f(u(r)),v\ra_H \emph{d} r+\int_0^t\la v(r),Q\emph{d} w(r)\ra_H,\\
\la \eta(t),\etatilde\ra_{\Mzeromu}&=\la\eta_0,\etatilde\ra_{\Mzeromu}+  \int_0^t\la \Tcal_\mu\eta(r),\etatilde\ra_{\Mzeromu} \emph{d}r+\int_0^t\la u(r),\etatilde\ra_{\Mzeromu}\emph{d} r,
\end{align*}
{holds for all $v\in H^1\cap L^{q'}(\domain)$, where $q'\geq1$ is the H\"older conjugate of $q$, and for all $\etatilde\in \Mzeromu$.}
\end{definition}

The first result of this note is the following well-posedness result ensuring the existence and uniqueness of weak solutions.
\begin{theorem} \label{thm:well-posed}
{Assume that \nameref{cond:mu}, \nameref{cond:Q} and \nameref{cond:phi}--\nameref{cond:phi:3} hold}. Then for all $U_0\in\Hzeromu$, \eqref{eqn:react-diff:mu} admits a unique weak solution $U(\,\cdot\,;\x)$ in the sense of Definition \ref{defn:mild-soln}. Furthermore, the solution $U(t;U_0)$ is continuous in $H$ with respect to $U_0$, for all fixed $t\ge 0$, i.e.,
\begin{align*}
\E\|U(t;U^{n}_0 )-U(t;U_0)\|^2_{\Hzeromu}\to 0,\quad\text{as  }n\to \infty,
\end{align*}
whenever $\|U_0^n-U_0\|_{\Hzeromu}\to 0$ as $n\to\infty$.
\end{theorem}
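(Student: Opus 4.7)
The plan is to prove Theorem \ref{thm:well-posed} by a Galerkin scheme adapted to the extended phase space $\Hzeromu$, leveraging the dissipative cancellation structure between the $u$- and $\eta$-equations. For each $n \geq 1$, I would project the $u$-equation via $P_n$ (in the spatial Dirichlet eigenbasis), retaining the full transport equation for $\eta^{(n)}$ driven by $u^{(n)} = P_n u$. Since $P_n H$ is finite-dimensional, $P_n\f(u^{(n)})$ is locally Lipschitz there; combined with the fact that $\Tcal_\mu$ generates a strongly continuous semigroup of contractions on $\Mzeromu$, this gives local pathwise solvability of the truncated system by standard SDE theory. The noise term $\sum_{k \le n}(Qe_k)\,dB_k$ is Hilbert--Schmidt uniformly in $n$ thanks to \nameref{cond:Q}, since $\|Q\|^2_{\HS} \le \alpha_1^{-1}\Tr(QAQ) < \infty$ by Poincar\'e.

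The key step is to derive uniform-in-$n$ a priori estimates on the Lyapunov functional $V(u,\eta) := \|u\|_H^2 + (1-\kappa)\|\eta\|^2_{\Mzeromu}$. Applying It\^o's formula, the cross term
\begin{align*}
-2(1-\kappa)\left\langle \int_0^\infty \mu(s) A\eta(s)\,ds,\, u \right\rangle_H = -2(1-\kappa)\la u,\eta\ra_{\Mzeromu},
\end{align*}
arising from $d\|u\|_H^2$, cancels exactly against the term $+2(1-\kappa)\la u,\eta\ra_{\Mzeromu}$ arising from $d\|\eta\|^2_{\Mzeromu}$. Combining this cancellation with \eqref{ineq:<T.eta,eta>} and \nameref{cond:phi:2} yields
\begin{align*}
dV + \bigl(2\kappa\|u\|^2_{H^1} + (1-\kappa)\delta\|\eta\|^2_{\Mzeromu} + 2a_2\|u\|^{p_0+1}_{L^{p_0+1}}\bigr)\,dt \le C\,dt + dM_t,
\end{align*}
where $M_t$ is a martingale controlled by $\Tr(Q^2)$. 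Moment bounds together with BDG then furnish uniform estimates for $\E \sup_{t \le T}V(u^{(n)},\eta^{(n)})^k$, and also for $u^{(n)}$ in $L^2_t H^1$ and $\f(u^{(n)})$ in $L^{(p_0+1)/p_0}_{t,x}$, which precludes finite-time blow-up and thus yields global existence of the Galerkin approximations.

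To extract a weak solution, I would combine the $H^1$-regularity in space with a fractional-in-time H\"older bound on $u^{(n)}$, read off from its stochastic evolution equation, so that an Aubin--Lions argument yields tightness of the laws of $u^{(n)}$. A Skorokhod representation then produces, along a subsequence, almost sure convergence $u^{(n)} \to u$ in $L^2_t H$ and a.e.\ in $(t,x)$ on a new probability space. For $\eta^{(n)}$, the explicit formula \eqref{form:eta(t)-representation} allows passage to the limit directly in terms of the limiting $u$. The a.e.\ convergence combined with the uniform $L^{p_0+1}$ bound and \nameref{cond:phi:1} delivers, via Vitali, the convergence $\f(u^{(n)}) \to \f(u)$ in $L^q_{t,x}$ for some $q > 1$, enough to identify the nonlinear drift in the limit. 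The stochastic integral is identified via martingale representation, and once uniqueness is established, a Gy\"ongy--Krylov argument transfers the solution back to the original probability space.

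Uniqueness and continuous dependence are handled together by a deterministic energy argument on the difference $(\bar u,\bar\eta) := U(\cdot;U_0^1) - U(\cdot;U_0^2)$ of two solutions driven by the same noise. The same cancellation structure as above together with \nameref{cond:phi:3}, which gives $\la \f(u^1) - \f(u^2),\bar u\ra_H \le a_\f \|\bar u\|_H^2$, and \eqref{ineq:<T.eta,eta>} yield
\begin{align*}
\frac{d}{dt}\bigl(\|\bar u\|_H^2 + (1-\kappa)\|\bar\eta\|^2_{\Mzeromu}\bigr) \le 2a_\f\bigl(\|\bar u\|_H^2 + (1-\kappa)\|\bar\eta\|^2_{\Mzeromu}\bigr),
\end{align*}
and Gronwall delivers pathwise contraction in $\Hzeromu$ with rate $e^{2a_\f t}$; taking expectations gives the continuity claim. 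The principal technical obstacle is the limit passage in $\f(u^{(n)})$: because \nameref{cond:phi:1} permits $p_0$ above the Sobolev-critical exponent, no direct compact embedding $H^1 \hookrightarrow L^{p_0+1}$ is available, so one must carefully combine the additional integrability forced by \nameref{cond:phi:2} with the a.e.\ convergence and a uniform-integrability check to identify the weak limit of $\f(u^{(n)})$ as $\f(u)$ rather than as an unknown element of $L^1_{t,x}$.
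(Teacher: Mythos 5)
Your proposal is sound and reaches the same estimates as the paper's proof, but the probabilistic scaffolding is genuinely different. The paper first subtracts the Wiener process, setting $u=v+\xi$ with $\d\xi=Q\,\d w$, so that the Galerkin system \eqref{eqn:shift:Galerkin} is a \emph{random} PDE with no stochastic integral; compactness for the nonlinearity is then obtained pathwise, by bounding $\partial_t V_N$ in $L^q(0,T;H^{-m_*})$ and invoking the deterministic Aubin--Lions lemma to get $V_N\to v$ strongly in $L^2(0,T;H)$, after which a.e.\ convergence plus the uniform $L^q$ bound identifies the weak limit of $\f(V_N+\xi)$ exactly as you propose via Vitali. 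Because everything happens $\omega$-by-$\omega$ on the original stochastic basis, the paper never needs tightness of laws, Skorokhod representation, martingale identification of the stochastic integral, or Gy\"ongy--Krylov; your route carries all of that machinery, which is correct but heavier than necessary for purely additive noise (it is the natural route for multiplicative noise). Conversely, your route avoids the paper's need to estimate $\E\|\xi(t)\|^{p_0+1}_{L^{p_0+1}}$ via Gaussianity and the pointwise condition $\sup_x\sum_k|Qe_k(x)|^2<\infty$, and to manipulate the shifted nonlinearity $\f(v+\xi)$ in the energy identity. Your other deviations are harmless: truncating only the $u$-equation while keeping the full linear transport for $\eta$ is legitimate since the memory term is a bounded linear functional of $\eta\in\Mzeromu$ and the $\eta$-equation is explicitly solvable by \eqref{form:eta(t)-representation}; the cancellation of the cross terms $\pm(1-\kappa)\la u,\eta\ra_{\Mzeromu}$ is exactly the computation behind \eqref{ineq:L^epsilon.Psi_0}; and your uniqueness/continuity argument via \nameref{cond:phi:3}, \eqref{ineq:<T.eta,eta>} and Gronwall on the noise-free difference equation is precisely what the paper asserts (and omits). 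Your closing diagnosis of the main obstacle --- identifying the weak $L^q$ limit of $\f(u^{(n)})$ when $p_0$ is supercritical, using a.e.\ convergence plus uniform integrability rather than a compact Sobolev embedding --- is exactly the mechanism of \cite[Lemma 8.3]{robinson2001infinite} that the paper relies on.
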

The method that we employ to construct solutions is the well-known Faedo-Galerkin approximation and can be found in many previous works for SPDE. {We refer the reader to \cite{albeverio2008spde,
caraballo2007existence,caraballo2008pullback,
glatt2008stochastic}, for instance}. For the sake of completeness, we supply the relevant details for the proof of Theorem \ref{thm:well-posed} in~Section \ref{sec:well-posed}.

\subsection{Existence of invariant probability measures}\label{sect:exist}

Under the assumptions of the well-posedness result Theorem \ref{thm:well-posed}, we may define Markov transition probabilities corresponding to the process $U(t;U_0)$ satisfying \eqref{eqn:react-diff:mu} given by
\begin{align*}
P_t^{\mu}(U_0,A):=\P(U(t;\x)\in A),
\end{align*}
for each $t\geq 0$, $U_0\in\Hzeromu$, and Borel sets $A\subseteq \Hzeromu$. We let $\B_b(\Hzeromu)$ denote the set of bounded Borel measurable functions defined on $\Hzeromu$. The Markov semigroup associated to \eqref{eqn:react-diff:mu} is the operator $P_t^{\mu}:\B_b(\Hzeromu)\to\B_b(\Hzeromu)$ defined by
\begin{align}\label{form:P_t^epsilon}
P_t^{\mu} f(U_0)=\E[f(U(t;\x))], \quad f\in \B_b(\Hzeromu).
\end{align}
Recall that a probability measure $\nu\in \Pcal r(\Hzeromu)$ is said to be \emph{invariant} for the semigroup $P_t^{\mu}$ if for every $f\in \B_b(\Hzeromu)$
\begin{align*}
\int_{\Hzeromu} f(U_0) (P_t^{\mu})^*\nu(\d U_0)=\int_{\Hzeromu} f(U_0)\nu(\d U_0),
\end{align*}
where $(P_t^{\mu})^*\nu$ denotes the push-forward measure of $\nu$ by $P_t^{\mu}$, i.e.,
$$\int_{\Hzeromu}f(\x)(P^\mu_t)^*\nu(\d \x)=\int_{\Hzeromu}P^\mu_t f(\x)\nu(\d \x).$$
Next, we denote by, $\L^\mu$, the generator associated to system \eqref{eqn:react-diff:mu}. One defines $\L^\mu$ for any $g\in C^2(\Hzeromu)$ with $g=g(U)$, $U=(u,\eta)$, satisfying
\begin{align*}
\Tr(D_{uu}gQQ^*)<\infty,
\end{align*}
by
\begin{equation} \label{form:L^epsilon}
\begin{aligned}
\L^\mu g(u,\eta)&:=-\kappa\la Au,D_u g\ra_H-(1-\kappa)\int_0^\infty\close \mu(s) \la  A\eta(s),D_u g\ra_H\d s+\la \f(u),D_u g\ra_H \\
&\quad\ +\la \Tcal_\mu\eta,D_\eta g\ra_{\Mzeromu}+\la u,{D_\eta  g}\ra_{\Mzeromu}+\frac{1}{2}\Tr(D_{uu}gQQ^*).
\end{aligned}
\end{equation}

In light of~\nameref{cond:mu}, we introduce the class, $\Mdelta$, of memory kernels defined by
\begin{equation} \label{form:M_delta}
\Mdelta = \{\mu:C^1([0,\infty);(0,\infty)):\mu'+\delta\mu\le 0\}.
\end{equation}
We now state our second main result of the paper concerning the existence and moment bounds in $\Hzeromu$ of an invariant probability measure.
\begin{theorem} \label{thm:existence}
Under the same hypothesis of Theorem \ref{thm:well-posed}, the system \eqref{eqn:react-diff:mu} admits at least one invariant probability measure $\numu$. Furthermore, for all $\beta>0$ sufficiently small independent of $\mu\in \Mdelta$ as in~\eqref{form:M_delta}, any invariant probability $\numu$ of~\eqref{eqn:react-diff:mu} satisfies
\begin{equation} \label{ineq:exponential-bound:nu^epsilon}
\sup_{\mu\in\Mdelta}\int_{\Hzeromu} \exp\left({\beta\|(u,\eta)\|^2_{\Hzeromu}}\right)\numu(\emph{d} u,\emph{d}\eta)<\infty.
\end{equation}
\end{theorem}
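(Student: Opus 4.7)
The plan is to apply the Krylov--Bogoliubov procedure to the Markov semigroup $P_t^{\mu}$ defined in \eqref{form:P_t^epsilon}: starting from $U_0=(0,0)$, form the time-averaged measures $\nu_T=T^{-1}\int_0^T (P_t^\mu)^*\delta_{U_0}\,\d t$, establish tightness of $\{\nu_T\}_{T\ge 1}$ on $\Hzeromu$, and extract a weak subsequential limit $\numu$. The Feller property of $P_t^\mu$ on bounded continuous functions is immediate from the continuity assertion of Theorem \ref{thm:well-posed}. The exponential moment bound \eqref{ineq:exponential-bound:nu^epsilon} will be produced from a Foster--Lyapunov inequality at the level of the generator $\L^\mu$ and then transferred to $\numu$ by truncation and Fatou.

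The first step is an It\^o-level energy estimate. Applying It\^o's formula to $V(u,\eta)=\|u\|_H^2+(1-\kappa)\|\eta\|_{\Mzeromu}^2$, the memory cross term $-2(1-\kappa)\int\mu(s)\la A\eta(s),u\ra_H\,\d s$ coming from the $u$-equation cancels $+2(1-\kappa)\la u,\eta\ra_{\Mzeromu}$ coming from the $\eta$-equation, while \eqref{ineq:<T.eta,eta>} supplies the dissipation $-\delta(1-\kappa)\|\eta\|_{\Mzeromu}^2$ from $\Tcal_\mu$ and \nameref{cond:phi:2} supplies $-2a_2\|u\|_{L^{p_0+1}}^{p_0+1}$. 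Together with the It\^o correction $\Tr(QQ^*)$ controlled by \nameref{cond:Q}, this yields a differential inequality of the form $\L^\mu V \le -cV+C$ whose constants depend on $\mu$ only through $\delta$. For \eqref{ineq:exponential-bound:nu^epsilon}, I would then apply It\^o to $\exp(\beta V)$. The stochastic correction produces terms of order $\beta^2\|Q^*u\|_H^2\,e^{\beta V}$ plus $\beta\,\Tr(QQ^*)\,e^{\beta V}$, while the dissipation provides $-2\beta\kappa\|u\|_{H^1}^2\,e^{\beta V}\le -2\beta\kappa\alpha_1\|u\|_H^2\,e^{\beta V}$. For $\beta>0$ sufficiently small (depending on $\kappa,\alpha_1$, and $\|Q\|$, but not on $\mu$), the stochastic correction is absorbed and the resulting inequality $\L^\mu e^{\beta V}\le -c\,e^{\beta V}+C$ yields $\sup_{t\ge 0}\E\,e^{\beta V(u(t),\eta(t))}<\infty$ uniformly over $\mu\in\Mdelta$.

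The main obstacle is tightness, owing to the continuous-but-not-compact embedding of the memory spaces. We circumvent this by working in the smaller space $\Z^\beta_\mu\hookrightarrow\Hzeromu$, which embeds compactly for $\beta>0$ by \cite[Lemma 3.1]{conti2006singular}. For the $u$-component, a second Lyapunov estimate on $\|u\|_{H^\beta}^2+(1-\kappa)\|\eta\|_{M^\beta_\mu}^2$, with the nonlinear contribution controlled via \nameref{cond:phi:1} and Sobolev interpolation against the $L^{p_0+1}$-control from the first step, provides a time-uniform $H^\beta$-bound in expectation for $\beta>0$ small. For $\eta$, the representation \eqref{form:eta(t)-representation} with $U_0=(0,0)$ reduces $\eta(t,s)=\int_0^{s\wedge t}u(t-r)\,\d r$, so that the three components of the $\mathcal{E}^\beta_\mu$-norm become weighted integrals in $u$: the $M^\beta_\mu$-piece by Cauchy--Schwarz and Fubini against $\mu(s)\,s\,\d s$, the $\|\Tcal_\mu\eta\|_{\Mzeromu}^2$-piece directly since $\partial_s\eta(t,s)=u(t-s)\mathbf{1}_{s\le t}$, and the tail function $\T_\eta^\mu(r)$ by splitting into the window $s<1/r$ (where Cauchy--Schwarz supplies an $s$-factor that beats the weight $r$) and the window $s>r$ (where $\int_r^\infty\mu(s)\,\d s\le\delta^{-1}\mu(r)$, obtained by integrating \eqref{ineq:mu}, furnishes the needed decay).

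Time-averaging these second-moment bounds over $[0,T]$ produces uniform control of $\nu_T$ on bounded sets of $\Z^\beta_\mu$, hence tightness on $\Hzeromu$, and Krylov--Bogoliubov delivers an invariant $\numu$. Finally, for \eqref{ineq:exponential-bound:nu^epsilon}, I would test $\nu_T$ against $\min(e^{\beta V},M)\in\B_b(\Hzeromu)$, pass to the weak limit along the tight subsequence, and then send $M\to\infty$ by monotone convergence, invoking the uniform-in-time exponential moment from the second step; the same argument applies to any invariant measure by $P_t^\mu$-invariance. The uniformity over $\Mdelta$ follows because every estimate in the second step depends on $\mu$ only through the decay rate $\delta$.
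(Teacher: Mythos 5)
Your proposal is correct and follows essentially the same route as the paper: Krylov--Bogoliubov from the zero initial datum, the Lyapunov bound $\L^\mu e^{\beta\Psi_0}\le -ce^{\beta\Psi_0}+C$ for small $\beta$, tightness via the compact embedding of $H^\beta\times\Ecal^\beta_\mu$ into $\Hzeromu$ using the explicit representation $\eta(t,s)=\int_0^{s\wedge t}u(t-r)\,\d r$ and the exponential decay $\mu(s)\le\mu(0)e^{-\delta s}$, and transfer of the exponential moment to any invariant measure by invariance, truncation, and monotone convergence. The only (immaterial) differences are that the paper works with $\beta=1$ via the $H^1$ Lyapunov functional rather than a small fractional exponent, and it proves the $\Tcal_\mu\eta$ and tail-function bounds uniformly in time through an exponentially weighted integral estimate rather than by time-averaging.
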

The existence of $\numu$ will follow the classical Krylov-Bogoliubov argument where it is sufficient to establish the tightness for a sequence of time-averaged probability measures. In order to do that, as mentioned above in Section \ref{sect:notation}, we will establish suitable moment bounds in $\Z^1_\mu$, (see \eqref{form:space:H_epsilon}), which is compactly embedded into $\Hzeromu$. The proof of Theorem \ref{thm:existence} will be presented in Section \ref{sec:existence}.

\subsection{Regularity of invariant probability measures}\label{sect:regularity} Finally, we turn to the main topic of regularity of invariant probability measures. For this purpose, we state the following conditions, which will be employed to study the support of $\numu$

\subsection*{(P4)}\label{cond:phi:d=3} Let $\f$ satisfy \nameref{cond:phi}--\nameref{cond:phi:3}. There exists $m\ge 2$ such that $\f\in C^{m-1}(\rbb)$ and that the following hold:
    \begin{enumerate}[label=\alph*.]

    \item For $i=2,\dots, 2[\frac{m}{2}]-2$, $\f^{(i)}(0)=0$.

    \item For $i=1,\dots, m-1$, there exists $p_i>0$ such that $|\f^{(i)}(x)|\le c(1+|x|^{p_i})$. Furthermore, $p_1<4$.

    \end{enumerate}

\subsection*{(Q2)}\label{cond:Q:higher_regularity}
Let $Q$ be as in \nameref{cond:Q} and $m$ be as in \nameref{cond:phi:d=3}. We assume that
\begin{align*}
\Tr(QA^mQ)<\infty,
\end{align*}
where $\Tr(QAQ)=\sum_{k\ge 1}\la QA^mQe_k,e_k\ra_H=\sum_{k\ge 1}\|Qe_k\|^2_{H^m}$.

Having introduced assumptions \nameref{cond:phi:d=3} and \nameref{cond:Q:higher_regularity} about the regularity of the non-linear potentials and noise structures, respectively, we now state the next main result asserting that any invariant probability $\numu$ in { the setting of dimension} $d\le 3$ must concentrate in spaces of higher regularity.
\begin{theorem} \label{thm:regularity}
{Let $d\leq 3$ and assume the same conditions from  Theorem \ref{thm:well-posed}. Additionally assume that \nameref{cond:Q:higher_regularity}, \nameref{cond:phi:d=3} hold} for $m\ge 2$. Then, any invariant probability $\numu$ of \eqref{eqn:react-diff:mu} satisfies
\begin{equation}\label{ineq:exponential-bound:nu(H^m)}
\sup_{\mu\in \Mdelta}\int_{\Hmmu} \exp\Big\{\beta_m\|(u,\eta)\|^{q_m}_{\Hmmu} \Big\}\numu(\emph{d}u,\emph{d}\eta)<\infty,
\end{equation}
for some positive constants $\beta_m$, $q_m\in(0,1)$. Here, $\Mdelta$ is as in~\eqref{form:M_delta}. Furthermore, for all $p\ge 1$,
\begin{equation} \label{ineq:moment-bound:nu(H^m)}
\sup_{\mu\in \Mdelta}\int_{\Hmmu} \|u\|^p_{H^m}\|u\|^2_{H^{m+1}}\numu(\emph{d} u,\emph{d}\eta)<\infty,
\end{equation}
In particular, $\numu(H^{m+1}\times \Mmmu)=1$.
\end{theorem}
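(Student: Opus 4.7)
The plan is to establish \eqref{ineq:exponential-bound:nu(H^m)}--\eqref{ineq:moment-bound:nu(H^m)} by induction on $m \geq 2$, implementing the control / Galerkin strategy sketched in Section \ref{sec:intro:results} on top of the a priori moment bounds to be derived in Section \ref{sec:apriori-moment-estimate}. The base $m=1$ case is covered by Theorem \ref{thm:existence}. Fix $\mu \in \mathcal{M}_\delta$ and an invariant probability $\nu^\mu$; the goal in the inductive step is to show that if the bounds hold at level $m-1$, then they persist at level $m$.

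The main device is a controlled companion process. Following \cite{glatt2021long,nguyen2022ergodicity}, I would introduce $\widehat{U} = (\widehat{u},\widehat{\eta})$ as the solution of an affinely perturbed copy of \eqref{eqn:react-diff:mu},
\begin{align*}
d\widehat{u} &= -\kappa A \widehat{u}\,dt - (1-\kappa)\int_0^\infty \mu(s) A \widehat{\eta}(s)\,ds\,dt + \varphi(\widehat{u})\,dt + Q\,dw + \lambda P_N(u-\widehat{u})\,dt,\\
\tfrac{d}{dt}\widehat{\eta} &= \mathcal{T}_\mu \widehat{\eta} + \widehat{u},
\end{align*}
driven by the \emph{same} Wiener process $w$ as $U$, with parameters $N \in \mathbb{N}$ and $\lambda > 0$ chosen large enough that the feedback dominates the dissipation deficit coming from $a_\varphi$ in \nameref{cond:phi:3}. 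Two properties are then required. First, the difference $V = U - \widehat{U}$ satisfies a deterministic equation in which the noise has cancelled, and a Gronwall argument on $\|V\|_{\mathcal{H}^0_\mu}^2$ yields exponential decay $\mathbb{E}\|V(t)\|_{\mathcal{H}^0_\mu}^2 \leq C e^{-\gamma t}\|V(0)\|_{\mathcal{H}^0_\mu}^2$ for \emph{any} initial data. Second, $\widehat{U}$ retains the Lyapunov structure of \eqref{eqn:react-diff:mu}, so for smooth initial data $\widehat{U}_0 \in \mathcal{H}^m_\mu$, a bootstrap It\^o computation on a Lyapunov functional equivalent to $\|\widehat{U}\|_{\mathcal{H}^m_\mu}^2$ -- closing the nonlinear terms $\langle A^m \varphi(\widehat{u}),A^m \widehat{u}\rangle$ via Sobolev embedding and the inductive $H^{m-1}$ moment bound, and handling the memory component through \eqref{ineq:<T.eta,eta>_(M^n)} -- delivers the time-uniform estimates
\begin{align*}
\sup_{t\geq 0}\mathbb{E}\,\exp\bigl(\beta_m\|\widehat{U}(t;\widehat{U}_0)\|_{\mathcal{H}^m_\mu}^{q_m}\bigr)<\infty,\qquad
\sup_{t\geq 0}\mathbb{E}\,\|\widehat{u}(t)\|_{H^m}^p\|\widehat{u}(t)\|_{H^{m+1}}^2<\infty.
\end{align*}

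The regularity of $\nu^\mu$ is then extracted by testing invariance against a bounded truncation on Galerkin modes. For $n\in\mathbb{N}$, $R>0$, and $t\geq 0$, the function $F_{n,R}(U_0) := \|P_n U_0\|_{\mathcal{H}^m_\mu}^2 \wedge R$ is bounded Borel, so invariance of $\nu^\mu$ gives $\int F_{n,R}\, d\nu^\mu = \int \mathbb{E}\,F_{n,R}(U(t;U_0))\, d\nu^\mu$. Fixing a reference smooth $\widehat{U}_0 \in \mathcal{H}^m_\mu$ and using $\|P_n V\|_{\mathcal{H}^m_\mu}^2 \leq \alpha_n^m \|V\|_{\mathcal{H}^0_\mu}^2$, the right-hand side is bounded by
\begin{align*}
2\alpha_n^m \int \mathbb{E}\|U(t;U_0) - \widehat{U}(t;\widehat{U}_0)\|_{\mathcal{H}^0_\mu}^2\, d\nu^\mu(U_0) + 2\int \mathbb{E}\|\widehat{U}(t;\widehat{U}_0)\|_{\mathcal{H}^m_\mu}^2\, d\nu^\mu(U_0).
\end{align*}
The exponential contraction of $V$ combined with the $\mathcal{H}^0_\mu$ moment of $\nu^\mu$ from Theorem \ref{thm:existence} sends the first term to $0$ as $t\to\infty$ for each fixed $n$, while the second is bounded uniformly in $t$ by the a priori estimate above. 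Sending $t\to\infty$, then $R\to\infty$ and $n\to\infty$ through monotone convergence / Fatou's lemma yields $\int \|U_0\|_{\mathcal{H}^m_\mu}^2\, d\nu^\mu < \infty$. Re-running the same invariance argument with the exponential test functional $\exp(\beta_m\|P_n U_0\|_{\mathcal{H}^m_\mu}^{q_m})\wedge R$ and with $\|P_n u\|_{H^m}^p \|P_n u\|_{H^{m+1}}^2 \wedge R$ produces \eqref{ineq:exponential-bound:nu(H^m)} and \eqref{ineq:moment-bound:nu(H^m)}, respectively, and the latter immediately gives $\nu^\mu(H^{m+1}\times M^m_\mu) = 1$. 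Uniformity in $\mu\in\mathcal{M}_\delta$ is preserved because every ingredient depends on $\mu$ only through $\delta$ in \nameref{cond:mu}.

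The hardest step, I expect, is the Lyapunov estimate for $\widehat{U}$ at the $H^m$ level. The nonlinear term $\langle A^m\varphi(\widehat{u}),A^m\widehat{u}\rangle$ is critical in physical dimension $d=3$ and can only be closed by combining Sobolev embeddings with the inductive lower-order moments -- this is precisely where the hypothesis $p_1 < 4$ of \nameref{cond:phi:d=3} enters, and it accounts for the induction rather than a single-shot estimate. A secondary technical point is ensuring that the affine feedback $\lambda P_N(u-\widehat{u})$ does not spoil the higher-order Lyapunov inequality: this is handled because $P_N$ is self-adjoint and commutes with $A$, so it contributes only finite-dimensional, easily absorbed cross-terms, while retaining its coercive effect on the low modes where it is needed to drive $V\to 0$.
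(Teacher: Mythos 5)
Your overall strategy for the support statement is the same as the paper's: a finite-dimensionally forced controlled process $\widehat{U}$ driven by the same noise, exponential contraction of the difference in $\Hzeromu$ (using $\lambda\alpha_N$ large enough to beat $a_\f$), time-uniform $\Hmmu$-bounds for $\widehat{U}$ from smooth (in the paper, zero) initial data, and then invariance tested against $R\wedge\|P_nU_0\|_{\Hmmu}$ with $t$ sent to infinity before $n$ and $R$. Your observation that the feedback term contributes only a harmless cross term $-\lambda\la P_N(\widehat u-u),\widehat u\ra_{H^k}\le C\alpha_N^{k+1}\|u\|_H^2$ to the higher-order Lyapunov inequality is exactly the mechanism used in the paper's Lemma \ref{lem:moment-bound:uhat}, and the bootstrap closing the nonlinearity via Agmon/Sobolev interpolation with $p_1<4$ is the content of Lemmas \ref{lem:moment-boud:H^2xM^2:d=3}--\ref{lem:moment-bound:H^m:d=3}. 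Up to and including the conclusion $\int\|U_0\|_{\Hmmu}\,\numu(\d U_0)<\infty$, your argument is sound.

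There is, however, a genuine gap in your derivation of \eqref{ineq:moment-bound:nu(H^m)}. You invoke a time-uniform pointwise bound $\sup_{t\ge0}\E\|\widehat u(t)\|_{H^m}^p\|\widehat u(t)\|_{H^{m+1}}^2<\infty$ and then re-run the invariance argument with the test function $R\wedge\|P_nu\|_{H^m}^p\|P_nu\|_{H^{m+1}}^2$. Such a pointwise-in-time $H^{m+1}$ moment is not available under \nameref{cond:Q:higher_regularity}: that hypothesis only gives $\Tr(QA^mQ)<\infty$, so the It\^o correction at the $H^{m+1}$ level is infinite and no Lyapunov estimate can be closed there; the solution is not expected to lie in $H^{m+1}$ at each fixed time with uniformly bounded moments. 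The $H^{m+1}$ information enters only through the \emph{dissipation} term of the $H^m$-level energy identity, i.e.\ in time-integrated form, as in \eqref{ineq:moment-bound:H^m:d=3}: one has $c\int_0^t\E\,\Psi_m(U(r))^{n-1}\|A^{\frac{m+1}{2}}u(r)\|_H^2\,\d r\le C\Psi_m(U_0)^{q}+Ct$. The correct extraction of \eqref{ineq:moment-bound:nu(H^m)} is therefore to integrate this inequality against $\numu$ (legitimate once the first part of the theorem guarantees $\numu$ charges $\Hmmu$ with finite polynomial moments), use invariance to replace the time integral of the expectation by $t$ times the spatial average against $\numu$ of the truncated integrand, divide by $t$, and let $t\to\infty$ before removing the truncation by monotone convergence. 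A similar remark applies to \eqref{ineq:exponential-bound:nu(H^m)}: rather than threading the exponential through the Galerkin/control decomposition (where sub-additivity fails), it is cleaner to first establish the support statement and then apply the a priori exponential bound \eqref{ineq:exponential-bound:H^m:d=3} for the original dynamics started from $\numu$-typical data, exactly as in the proof of \eqref{ineq:exponential-bound:nu^epsilon}.
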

We note that Theorem \ref{thm:regularity} may be considered as a stochastic analogue to \cite[Theorem 8.1]{conti2006singular} for global attractors in deterministic settings. In order to prove Theorem \ref{thm:regularity}, {we} will make use of a series of bootstrap arguments where we will inductively establish moment bounds in {higher-order} spaces $\H^k$ for $k=2,\dots,m$. The restriction $d\leq3$ arises from the need to control $L^\infty$--norms, while the restriction $p_1<4$ from \nameref{cond:phi:d=3} arises from the need to control $|\f'(x)|$ by $|x|^{p_1}$. Our approach draws upon the one taken in \cite{conti2006singular}, which dealt with analogous regularity issues. The proof of Theorem \ref{thm:regularity} will be {carried out} in Section \ref{sec:regularity}.

\section{A priori moment estimates}\label{sec:apriori-moment-estimate}

Throughout the rest of the paper, $c$ and $C$ denote generic positive constants that may change from line to line. The main parameters that they depend on will appear between parenthesis, e.g., $c(T,q)$ is a function of $T$ and $q$. First we state an exponential moment bound in $\Hzeromu$ for $U(t)$.

We introduce the function $\Psi_0$ defined as
\begin{equation} \label{form:Psi_0}
\Psi_0(u,\eta)=\frac{1}{2}\|u\|^2_{H}+\frac{1}{2}(1-\kappa)\|\eta\|^2_{\Mzeromu}.
\end{equation}

In Lemma \ref{lem:moment-bound:H^0_epsilon} below, we assert an energy estimate in $\Hzeromu$ through function $\Psi_0$ defined in \eqref{form:Psi_0}.

\begin{lemma} \label{lem:moment-bound:H^0_epsilon}
Assume the hypotheses of~Theorem \ref{thm:well-posed} and let $U_0=(u_0,\eta_0)\in \Hzeromu$. Then

\begin{enumerate}[noitemsep,topsep=0pt,wide=0pt,label=\arabic*.,ref=\theassumption.\arabic*]
\item For all $\beta$ sufficiently small independent of $\mu\in\Mdelta$,
\begin{equation}\label{ineq:exponential-bound:H^0_epsilon}
\E \exp\left({\beta\Psi_0(U(t))}\right)\le e^{-c_0t}\exp\left({\beta\Psi_0(U_0)}\right)+C_0,\quad t\ge 0,
\end{equation}
where $\Psi_0$ is as in \eqref{form:Psi_0} and $c_0=c_0(\beta)>0$, $C_0=C_0(\beta)>0$ do not depend on $U_0\in\Hzeromu$, $t\ge 0$ and $\mu\in\Mdelta$.

\item For all $n\ge 1$, there exist positive constants $c_{0,n},\tilde{c}_{0,n}$ and $C_{0,n}$ independent of $U_0\in\Hzeromu$, $t\ge 0$ and $\mu\in\Mdelta$ such that
\begin{equation}  \label{ineq:d.Psi_0^n}
\emph{d} \Psi_0(U(t))^n\le -c_{0,n} \Psi_0(U(t))^n\emph{d} t+\tilde{c}_{0,n}\emph{d} t+\emph{d} M_{0,n}(t),
\end{equation}
where $M_{0,n}(t)$ is the semi-martingale given by
\begin{equation} \label{form:M_(0,n)}
M_{0,n}(t)=\int_0^t n\Psi_0(U(r))^{n-1}\la u(t),Q\emph{d} w(r)\ra_H.
\end{equation}
Furthermore,
\begin{equation} \label{ineq:Psi_0^n}
\E\Psi_0(U(t))^n \le e^{-c_{0,n}t}\Psi_0(U_0)^n+C_{0,n},\quad t\ge 0.
\end{equation}
\end{enumerate}
\end{lemma}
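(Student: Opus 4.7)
The plan is to apply an Itô-type energy identity to $\Psi_0(U(t))$, exploit a key cancellation between the memory contributions in the two equations of~\eqref{eqn:react-diff:mu}, and then bootstrap to moments and exponential moments. Formally applying the chain rule gives
\begin{align*}
\d\Psi_0(U(t)) &= \la u,\d u\ra_H + (1-\kappa)\la \eta,\d\eta\ra_{\Mzeromu} + \tfrac{1}{2}\Tr(QQ^*)\d t.
\end{align*}
Substituting the two evolution equations, the drift produces a term $-(1-\kappa)\la u,\eta\ra_{\Mzeromu}$ (from integrating the memory integral by parts to recognize it as the $\Mzeromu$ inner product) and an opposite term $+(1-\kappa)\la u,\eta\ra_{\Mzeromu}$ (from the forcing term in the $\eta$-equation). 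These cross terms cancel identically, which is the structural reason why $\Psi_0$ is a natural Lyapunov functional.

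With the cancellation in hand, I would invoke the bound~\eqref{ineq:<T.eta,eta>} for $\la \Tcal_\mu\eta,\eta\ra_{\Mzeromu}$, the Poincaré inequality $\|u\|_{H^1}^2\ge\alpha_1\|u\|_H^2$ applied to $-\kappa\|u\|_{H^1}^2$, and \nameref{cond:phi:2} to handle $\la u,\f(u)\ra_H \le -a_2\|u\|_{L^{p_0+1}}^{p_0+1}+a_3|\domain|$. Dropping the (nonpositive) $L^{p_0+1}$ term and setting $c_0=\min(2\kappa\alpha_1,\delta)$ yields the pointwise SDE inequality
\begin{align*}
\d\Psi_0(U) \le -c_0\Psi_0(U)\d t + C\,\d t + \la u,Q\d w\ra_H,
\end{align*}
with $C$ depending only on $\Tr(QQ^*)$, $a_3$, and $|\domain|$, and notably independent of $\mu\in\Mdelta$ since the dissipativity constant $\delta$ enters the same way for every such kernel. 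This establishes~\eqref{ineq:d.Psi_0^n} for $n=1$; for general $n$, I apply Itô to $\Psi_0^n$, whose correction $\tfrac{n(n-1)}{2}\Psi_0^{n-2}\|Q^*u\|_H^2$ is controlled by $C\Psi_0^{n-1}$ via $\|Q^*u\|_H^2\lesssim\|u\|_H^2\lesssim\Psi_0$, then absorb the $\Psi_0^{n-1}$ contribution into $-nc_0\Psi_0^n$ by Young's inequality to recover~\eqref{ineq:d.Psi_0^n}. Taking expectations (after a localization argument to kill the martingale) and applying Grönwall yields~\eqref{ineq:Psi_0^n}.

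For the exponential bound, I would apply Itô's formula to $e^{\beta\Psi_0}$, producing drift
\begin{align*}
\beta e^{\beta\Psi_0}\bigl(-c_0\Psi_0+C\bigr) + \tfrac{\beta^2}{2} e^{\beta\Psi_0}\|Q^*u\|_H^2 \le \beta e^{\beta\Psi_0}\bigl[(-c_0+\beta C_Q)\Psi_0 + C\bigr],
\end{align*}
where $C_Q$ comes from $\|Q^*u\|_H^2\le 2 C_Q\Psi_0$. Choosing $\beta>0$ small enough that $\beta C_Q \le c_0/2$, the bracket becomes $-\tfrac{c_0}{2}\Psi_0+C$, and since the function $x\mapsto (-\tfrac{c_0}{2}x+C)\beta e^{\beta x}+\alpha e^{\beta x}$ is bounded above on $[0,\infty)$ for any $\alpha>0$, one obtains a pointwise inequality of the form $(\text{drift of }e^{\beta\Psi_0})\le -\alpha e^{\beta\Psi_0}+\widetilde{C}$. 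Taking expectations, localizing, and applying Grönwall then delivers~\eqref{ineq:exponential-bound:H^0_epsilon}, with $c_0$ and $C_0$ independent of $\mu\in\Mdelta$ because every estimate used only the uniform dissipativity constant~$\delta$.

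The main technical obstacle is the rigorous justification of the Itô identity above: the variational solution $u$ satisfies only $u\in C_w([0,\infty);H)\cap L^2_{\text{loc}}([0,\infty);H^1)$, so the chain rule must be established either by first running the computation on the finite-dimensional Galerkin projections $P_n U$ from Section~\ref{sec:well-posed} (where the noise is projected and $\f$ is smooth on a finite-dimensional subspace, so standard Itô applies) and then passing to the limit using the uniform bounds already obtained at the Galerkin level, or by invoking an infinite-dimensional Itô formula adapted to the variational triple. Either route is routine but requires care so that the cross-term cancellation, which is exact at the Galerkin level, is preserved in the limit. All remaining steps — Young's inequality, Grönwall, localization of the martingale via stopping times and Fatou — are standard once the Itô identity is in place.
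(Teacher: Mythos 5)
Your proposal is correct and follows essentially the same route as the paper's proof: the same cross-term cancellation making $\Psi_0$ a Lyapunov functional, the bound \eqref{ineq:<T.eta,eta>} together with \nameref{cond:phi:2} and Poincar\'e for the drift, Young's inequality to absorb the $\Psi_0^{n-1}$ correction for higher moments, and the smallness condition $\beta\lesssim \kappa\alpha_1/\Tr(QQ^*)$ plus the elementary bound \eqref{ineq:e^r(r-C)>e^r-C} for the exponential moment. Your added remarks on justifying the It\^o identity at the Galerkin level and localizing the martingale are sensible rigor points that the paper elides by working formally with the generator $\L^\mu$, but they do not constitute a different argument.
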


\begin{proof}

We first start {with}~\eqref{ineq:exponential-bound:H^0_epsilon} and compute partial derivatives of $\Psi_0$
\begin{align*}
D_u\Psi_0 = u,\quad D_\eta \Psi_0={(1-\kappa)}\eta,\quad \text{and}\quad D_{uu}\Psi_0=Id.
\end{align*}
Recalling $\L^\mu$ as in \eqref{form:L^epsilon}, we have
\begin{align*}
\L^\mu\Psi_0(u,v)& {\ =}-\kappa\|A^{1/2}u\|^2_H-(1-\kappa)\la \eta,u\ra_{\Mzeromu}+\la \f(u),u\ra_H+\frac{1}{2}\Tr(QQ^*)\\
&\quad\ +(1-\kappa)\la \Tcal_\mu\eta,\eta\ra_{\Mzeromu} +(1-\kappa)\la u,\eta\ra_{\Mzeromu}\\
&= -\kappa\|A^{1/2}u\|^2_H+(1-\kappa)\la \Tcal_\mu\eta,\eta\ra_{\Mzeromu}+\la \f(u),u\ra_H+\frac{1}{2}\Tr(QQ^*).
\end{align*}
Recalling \nameref{cond:Q}, we readily have $\Tr(QQ^*)<\infty$. In light of \eqref{ineq:<T.eta,eta>},
\begin{align*}
\la \Tcal_\mu\eta,\eta\ra_{\Mzeromu}\le -\frac{1}{2}\delta\|\eta\|^2_{\Mzeromu}.
\end{align*}
 Using \nameref{cond:phi:2}, it holds that
\begin{align*}
\la \f(u),u\ra_H\le a_3|\domain|,
\end{align*}
where $|\domain|$ denotes the {Lebesgue measure} of $\domain$ in $\rbb^d$.
Combining the above estimates, we arrive at
\begin{align} \label{ineq:L^epsilon.Psi_0}
\L^\mu\Psi_0(u,v)&\le -\kappa\|A^{1/2}u\|^2_H-\frac{1}{2}(1-\kappa)\delta\|\eta\|^2_{\Mzeromu}+a_3|\domain|+\frac{1}{2}\Tr(QQ^*).
\end{align}
Turning to~\eqref{ineq:exponential-bound:H^0_epsilon}, we consider $g(u,\eta)=\exp\left({\beta\Psi_0(u,\eta)}\right)$. For $\xi\in\Hzeromu$, the Frechet derivatives of $g$ along the direction of $\xi$ are given by
\begin{align*}
\la D_ug(u,\eta),\pi_1\xi\ra_{H}&=\beta \exp\left({\beta\Psi_0(u,\eta)}\right)\la u,\pi_1\xi\ra_{H},\\
\la D_\eta g(u,\eta),\pi_2\xi\ra_{\Mzeromu}&=\beta(1-\kappa) \exp\left({\beta\Psi_0(u,\eta)}\right)\la\eta,\pi_2\xi\ra_{\Mzeromu},\\
\text{and}\quad D_{uu}g(u,\eta)(\xi)&=\beta \exp\left({\beta\Psi_0(u,\eta)}\right)\pi_1\xi+\beta^2\exp\left({\beta\Psi_0(u,\eta)}\right)\la u,\pi_1\xi\ra_{H} u.
\end{align*}
{Applying $\L^\mu$ to $g$ gives
\begin{align*}
\L^\mu g(u,\eta)&=\beta e^{\beta{\Psi_0}(u,\eta)}\Big(-\kappa\|A^{1/2}u\|^2_H+(1-\kappa)\la \Tcal_\mu\eta,\eta\ra_{\Mzeromu}+\la \f(u),u\ra_H \notag \\
&\quad\ +\frac{1}{2}\Tr(QQ^*)+\frac{1}{2}\beta\sum_{k\ge 1}\la u,QQ^*e_k\ra_H\la u,e_k\ra_{H}\Big)\\
&=\beta e^{\beta{\Psi_0}(u,\eta)}\Big(\L^\mu\Psi_0(u,\eta)+\frac{1}{2}\beta\sum_{k\ge 1}\la u,QQ^*e_k\ra_H\la u,e_k\ra_{H}\Big) .
\end{align*}
In view of \eqref{ineq:L^epsilon.Psi_0}, we readily have
\begin{align}\label{ineq:L^mu.g}
    \L^\mu g(u,\eta)&\le \beta e^{\beta{\Psi_0}(u,\eta)}\Big(-\kappa\|A^{1/2}u\|^2_H-\frac{1}{2}(1-\kappa)\delta\|\eta\|^2_{\Mzeromu}+a_3|\domain| \notag \\
    &\quad\ +\frac{1}{2}\Tr(QQ^*)+\frac{1}{2}\beta\sum_{k\ge 1}\la u,QQ^*e_k\ra_H\la u,e_k\ra_{H}\Big).
\end{align}
To estimate the last term on the above right hand side, we recall  \nameref{cond:Q}
\begin{align*}
\frac{1}{2}\beta\sum_{k\ge 1}\la u,QQ^*e_k\ra_H\la u,e_k\ra_{H}&=\frac{1}{2}\beta\|Qu\|^2_{H}\\&=\frac{1}{2}\beta\sum_{k\ge 1}|\la u,Qe_k\ra_H|^2\\
&\le  \frac{1}{2}\beta\Tr(QQ^*)\|u\|^2_H,
\end{align*}
which can be subsumed into $-\kappa\|A^{1/2}u\|_{H}^2$ by taking $\beta$ sufficiently small, namely,
\begin{align*}
\beta<\frac{\kappa\alpha_1}{\Tr(QQ^*)}.
\end{align*}
So, recalling $\Psi_0$ defined in \eqref{form:Psi_0}, we get from \eqref{ineq:L^mu.g}
\begin{align*}
    \L^\mu g(u,\eta)&\le \beta e^{\beta\Psi_0(u,\eta)}\Big(-\frac{1}{2}\kappa\alpha_1\|u\|^2_H-(1-\kappa)\frac{\delta}{2}\|\eta\|^2_{\Mzeromu}+a_3|\domain|+\frac{1}{2}\Tr(QQ^*)\Big)\\
   &\le \beta e^{\beta\Psi_0(u,\eta)}\Big(-c \Psi_0(u,\eta)+a_3|\domain|+\frac{1}{2}\Tr(QQ^*)\Big).
\end{align*}
We thus combine the above estimate with the identity
\begin{align*}
    \frac{\d}{\d t}\E g(U(t)) = \E\L^\mu g(U(t)),
\end{align*}
to infer the existence of positive constants $c=c(\beta,\kappa,Q,\f)$ and $C=C(\beta,\kappa,Q,\f)$ such that the following holds uniformly in $t$ and $\mu\in\Mdelta$ defined in~\eqref{form:M_delta}
\begin{align*}
\frac{\d}{\d t}\E g(U(t))=\frac{\d}{\d t}\E \,\exp\left({\beta\Psi_0(U(t))}\right)&\le -c\E \,\exp\left({\beta\Psi_0(U(t))}\right)\big(\Psi_0(U(t))-C\big).
\end{align*} }
To further bound the above right hand side, we employ the elementary fact that there exists a constant $\tilde{C}=\tilde{C}(\beta,C)>0$ such that for all $r\ge 0$,
\begin{equation} \label{ineq:e^r(r-C)>e^r-C}
e^{\beta r}(r-C)>e^{\beta r}-\tilde{C}.
\end{equation}
So, there exist $c$ and $C$ independent of $t$, $\mu\in\Mdelta$ and initial condition $U_0$ such that
\begin{equation} \label{ineq:d/dt.E.e^(beta.Psi_1(t))}
\frac{\d}{\d t}\E \,\exp\left({\beta\Psi_0(U(t))}\right)\le - c\,\E \,\exp\left({\beta\Psi_0(U(t))}\right)+C.
\end{equation}
This establishes~\eqref{ineq:exponential-bound:H^0_epsilon} by virtue of Gronwall's inequality.

With regard to~\eqref{ineq:d.Psi_0^n}--\eqref{ineq:Psi_0^n}, we shall proceed by induction on $n$. The case $n=1$ is actually a consequence of the above estimates. Indeed, by It\^o's formula and~\eqref{ineq:L^epsilon.Psi_0}, we readily have
\begin{align}
\d \Psi_0(U(t))
&=\L^\mu\Psi_0(U(t))\d t+\la u(t),Q\d w(t)\ra_H \notag\\
&\le -\kappa\|A^{1/2}u\|^2_H\d t-\frac{1}{2}(1-\kappa)\delta\|\eta\|^2_{\Mzeromu}\d t+\Big(a_3|\domain|+\frac{1}{2}\Tr(QQ^*)\Big)\d t  \notag \\
&\quad\ +\la u(t),Q\d w(t)\ra_H,\label{ineq:d.Psi_0}
\end{align}
which establishes~\eqref{ineq:d.Psi_0^n} for $n=1$. Also,
\begin{align*}
\frac{\d}{\d t}\E\Psi_0(U(t))&=-\kappa\alpha_1\E\|u(t)\|^2_H-\frac{1}{2}(1-\kappa)\delta\E\|\eta(t)\|^2_{\Mzeromu}+a_3|\domain|+\frac{1}{2}\Tr(QQ^*),
\end{align*}
whence
\begin{align*}
\E\Psi_0(U(t)) \le e^{-c_{0,1}t}\Psi_0(U_0)+C_{0,1},
\end{align*}
where
\begin{align} \label{form:c_(0,1)}
c_{0,1} = \min\{2\kappa\alpha_1,(1-\kappa)\delta\},\quad\text{and}\quad C_{0,1}= \frac{a_3|\domain|+\frac{1}{2}\Tr(QQ^*)}{c_{0,1}},
\end{align}
implying \eqref{ineq:Psi_0^n} for $n=1$.

Now consider $n\ge 2$, for $\xi\in\Hzeromu$, the Frechet derivatives of $\Psi_0(u,\eta)^n$ along the direction of $\xi$ are given by
\begin{align*}
\la D_u\Psi_0(u,\eta)^n,\pi_1\xi\ra_{H}&=n\Psi_0(u,\eta)^{n-1}\la u,\pi_1\xi\ra_{H},\\
\la D_\eta \Psi_0(u,\eta)^n,\pi_2\xi\ra_{\Mzeromu}&=n\Psi_0(u,\eta)^{n-1}\la\eta,\pi_2\xi\ra_{\Mzeromu},\\
D_{uu}\Psi_0(u,\eta)^n(\xi)&=n\Psi_0(u,\eta)^{n-1}\pi_1\xi+n(n-1)\Psi_0(u,\eta)^{n-2}\la u,\pi_1\xi\ra_{H} u.
\end{align*}
Applying $\L^\mu$, (see \eqref{form:L^epsilon}), to $\Psi_0(u,\eta)^n$ gives
\begin{align*}
&\L^\mu \Psi_0(u,\eta)^n\\
&=n\Psi_0(u,\eta)^{n-1}\Big(-\kappa\|A^{1/2}u\|^2_H+(1-\kappa)\la \Tcal_\mu\eta,\eta\ra_{\Mzeromu}+\la \f(u),u\ra_H+\frac{1}{2}\Tr(QQ^*)\Big)\\
&\quad +\frac{1}{2} n(n-1)\Psi_0(u,\eta)^{n-2}\|Qu\|^2_H.
\end{align*}
Similarly to the base case $n=1$,
\begin{align*}
\la \Tcal_\mu\eta,\eta\ra_{\Mzeromu}+\la \f(u),u\ra_H&\le -\frac{1}{2}\delta\|\eta\|^2_{\Mzeromu}+a_3|\domain|.
\end{align*}
Also,
\begin{align*}
\frac{1}{2}\|Qu\|^2_H&\le \frac{1}{2}\Tr(QQ^*)\|u\|^2_H\le \Tr(QQ^*)\Psi_0(u,\eta).
\end{align*}
It follows that
\begin{align*}
\L^\mu \Psi_0(u,\eta)^n&\le -c\Psi_0(u,\eta)^n + C\Psi_0(u,\eta)^{n-1}.
\end{align*}
By {Young's} inequality, it is clear that $\Psi_0(u,\eta)^{n-1}$ can be {absorbed} into $-c\Psi_0(u,\eta)^n$. We therefore may infer the existence of positive constants $c_{0,n}$ and $\tilde{c}_{0,n}$ such that
\begin{align}\label{ineq:L^epsilon.Psi_0^n}
\L^\mu \Psi_0(u,\eta)^n\le -c_{0,n}\Psi_0(u,\eta)^n+\tilde{c}_{0,n}.
\end{align}
As a consequence, by It\^o's formula, we arrive at
\begin{align*}
\d \Psi_0(U(t))^n&=\L^\mu\Psi_0(U(t))\d t+n\Psi_0(U(t))^{n-1}\la u(t),Q\d w(t)\ra_H\\
&\le -c_{0,n}\Psi_0(U(t))^n\d t+\tilde{c}_{0,n}\d t+n\Psi_0(U(t))^{n-1}\la u(t),Q\d w(t)\ra_H,
\end{align*}
which proves~\eqref{ineq:d.Psi_0^n}. Furthermore,
\begin{align*}
\frac{\d}{\d t}\E \Psi_0(U(t))^n&\le -c_{0,n}\E\Psi_0(U(t))^n+\tilde{c}_{0,n}.
\end{align*}
This together with Gronwall's inequality produces \eqref{ineq:Psi_0^n} for all $n\ge 2$. The proof is thus finished.
\end{proof}

We next introduce the function
\begin{equation} \label{form:Psi_1}
\Psi_1(u,\eta)=\frac{1}{2}\|u\|^2_{H^1}+\frac{1}{2}(1-\kappa)\|\eta\|^2_{\Monemu},
\end{equation}
and for general $m\in\nbb$
\begin{equation} \label{form:Psi_m}
\Psi_m(u,\eta)=\frac{1}{2}\|u\|^2_{H^m}+\frac{1}{2}(1-\kappa)\|\eta\|^2_{\Mmmu}.
\end{equation}
To prove the $\Honemu$--analog of Lemma \ref{lem:moment-bound:H^0_epsilon}, we will make use of the following elementary inequality.
\begin{lemma}\label{lem:tech:1}
{ Given $c,c'>0$, the following holds
\begin{equation*}
e^{-ct}\int_0^t e^{-(c'-c)r}\emph{d} r\le Ce^{-\tilde{c} t},\quad t\ge 0,
\end{equation*}
for some positive constants $C=C(c,c'),\tilde{c}=\tilde{c}(c,c')$ independent of $t$.}
\end{lemma}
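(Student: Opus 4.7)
The plan is to prove the bound by direct evaluation of the integral, splitting into the three cases $c'>c$, $c'<c$, and $c'=c$, and then choosing $\tilde{c}$ strictly smaller than $\min(c,c')$ so that a single bound covers all three situations.

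First I would compute the integral explicitly. When $c'\neq c$ we have
\begin{equation*}
e^{-ct}\int_0^t e^{-(c'-c)r}\d r = \frac{e^{-ct}-e^{-c't}}{c'-c},
\end{equation*}
so in the case $c'>c$ one bounds the numerator by $e^{-ct}$, giving an estimate of the form $\frac{1}{c'-c}e^{-ct}$, while in the case $c'<c$ one bounds the numerator by $e^{-c't}$, giving $\frac{1}{c-c'}e^{-c't}$. In the degenerate case $c'=c$ the integral collapses to $t$, so the left-hand side becomes $te^{-ct}$, which decays exponentially but with a polynomial prefactor.

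To package these three sub-estimates uniformly, I would set $\tilde{c}$ to be any constant strictly less than $\min(c,c')$; a concrete choice is $\tilde{c}=\tfrac{1}{2}\min(c,c')$. In the two non-degenerate cases the decay rate already exceeds $\tilde{c}$, so the desired inequality holds with $C=1/|c'-c|$. In the case $c=c'$ one uses the elementary bound $te^{-(c-\tilde{c})t}\le C(c,\tilde{c})$, valid for all $t\ge 0$, to absorb the $t$ factor into the exponential with slower rate, producing a constant $C=C(c)$.

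There is essentially no obstacle here, as the argument is a one-line integration followed by a case analysis. The only mild subtlety is ensuring that the constants $C$ and $\tilde{c}$ one writes down at the end are uniform across the three regimes, which is handled by choosing $\tilde{c}<\min(c,c')$ strictly and taking $C$ to be the maximum of the constants produced in the three cases.
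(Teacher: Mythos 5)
Your proof is correct and follows essentially the same route as the paper: a three-way case split on the sign of $c'-c$, with the explicit bounds $e^{-ct}/(c'-c)$, $e^{-c't}/(c-c')$, and $te^{-ct}\le Ce^{-ct/2}$ in the respective cases. The only cosmetic difference is that you evaluate the integral in closed form before bounding, whereas the paper bounds the integral directly, and your uniform choice $\tilde{c}=\tfrac12\min(c,c')$ matches the rate the paper obtains.
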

\begin{proof}
There are three cases depending on the sign of $c'-c$. If $c'-c>0$, then
\begin{align*}
e^{-ct}\int_0^t e^{-(c'-c)r}\d r\le \frac{e^{-ct}}{c'-c}.
\end{align*}
Otherwise, if $c'-c< 0$
\begin{align*}
e^{-ct}\int_0^t e^{-(c'-c)r}\d r= e^{-c't}\int_0^t e^{-(c-c')(t-r)}\d r\le \frac{e^{-c't}}{c-c'}.
\end{align*}
Now if $c'-c=0$,
\begin{align*}
e^{-ct}\int_0^t e^{-(c'-c)r}\d r= e^{-ct}t\le \frac{2e^{-ct/2}}{c}.
\end{align*}
Altogether, we observe that
\begin{equation}\notag
e^{-ct}\int_0^t e^{-(c'-c)r}\d r\le Ce^{-\tilde{c}t}.
\end{equation}
\end{proof}

\begin{lemma} \label{lem:moment-boud:H^1_epsilon}
Assume the hypotheses of~Theorem \ref{thm:well-posed} and let $U_0=(u_0,\eta_0)\in \Honemu$. Then
\begin{enumerate}
[noitemsep,topsep=0pt,wide=0pt,label=\arabic*.,ref=\theassumption.\arabic*]
\item For all $\beta$ sufficiently small independent of $\mu\in\Mdelta$,
\begin{equation}\label{ineq:exponential-bound:H^1_epsilon}
\E \exp\left({\beta\Psi_1(U(t))}\right)\le  e^{-c_{1,0}t}\exp\left({\beta C_{1,0}\Psi_1(U_0)}\right)+C_{1,0},\quad t\ge 0,
\end{equation}
where $\Psi_1(u,\eta)$ is as in \eqref{form:Psi_1} and $c_{1,0}=c_{1,0}(\beta)>0$, $C_{1,0}=C_{1,0}(\beta)>0$ do not depend on $U_0$, $\mu\in\Mdelta$ and $t\ge 0$.

\item For all $n\ge 1$, there exist positive constants $c_{1,n}$, $C_{1,n}$ independent of $U_0$, $\mu\in\Mdelta$ and $t\ge 0$ such that
\begin{equation}  \label{ineq:d.Psi_1^n}
\emph{d} \Psi_1(U(t))^n\le -c_{1,n} \Psi_1(U(t))^n\emph{d} t+C_{1,n}\emph{d} t+C_{1,n}\Psi_0(U(t))^{n}\emph{d} t+\emph{d} M_{1,n}(t),
\end{equation}
where $M_{1,n}(t)$ is the semi-martingale given by
\begin{equation} \label{form:M_(1,n)}
M_{1,n}(t)=\int_0^t n\Psi_1(U(r))^{n-1}\la A^{1/2}u(t),A^{1/2}Q\emph{d} w(r)\ra_H.
\end{equation}
Furthermore,
\begin{equation} \label{ineq:Psi_1^n}
\E\Psi_1(U(t))^n \le C_{1,n}e^{-c_{1,n}t}\Psi_1(U_0)^n+C_{1,n},\quad t\ge 0.
\end{equation}
\end{enumerate}
\end{lemma}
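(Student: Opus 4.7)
The argument parallels that of Lemma \ref{lem:moment-bound:H^0_epsilon}, but with two new ingredients: handling the nonlinear drift $\la\f(u),Au\ra_H$ at the $H^1$-level, and treating a lower-order forcing that does not admit self-absorption. The starting point is the generator computation. Since $D_u\Psi_1 = Au$, $D_\eta\Psi_1=(1-\kappa)\eta$ in the $\Monemu$-pairing, and $D_{uu}\Psi_1 = A$, the cross terms between the memory convolution and the transport equation cancel via $\la u,\eta\ra_{\Monemu}=\int_0^\infty\mu(s)\la Au,A\eta(s)\ra_H\,\d s$. Invoking \eqref{ineq:<T.eta,eta>_(M^n)}, integrating $\la\f(u),Au\ra_H$ by parts into $\int_\domain\f'(u)|\grad u|^2\,\d x\le a_\f\|u\|_{H^1}^2$ via \nameref{cond:phi:3}, and noting $\Tr(AQQ^*)=\Tr(QAQ)<\infty$ by \nameref{cond:Q}, one obtains
\begin{equation*}
\L^\mu\Psi_1 \le -\kappa\|Au\|_H^2 + a_\f\|u\|_{H^1}^2 - \tfrac{1}{2}(1-\kappa)\delta\|\eta\|_{\Monemu}^2 + \tfrac{1}{2}\Tr(QAQ).
\end{equation*}
A standard interpolation $\|u\|_{H^1}^2\le\epsilon\|Au\|_H^2+C_\epsilon\|u\|_H^2$ then produces the key recursive estimate
\begin{equation*}
\L^\mu\Psi_1 \le -c_1\Psi_1 + C\Psi_0 + C',
\end{equation*}
which is the $\Psi_1$-analog of \eqref{ineq:L^epsilon.Psi_0} but now carries a genuine lower-order forcing.

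For the polynomial bound \eqref{ineq:Psi_1^n}, the case $n=1$ follows by applying It\^o to $\Psi_1(U(t))$ and invoking the recursive estimate. For $n\ge 2$, the It\^o correction at $\Psi_1^n$ contributes $\tfrac{1}{2}n(n-1)\Psi_1^{n-2}\sum_k\la Au,Qe_k\ra_H^2$, which by Cauchy--Schwarz on each summand and Bessel summation is bounded by $\|u\|_{H^1}^2\Tr(QAQ)$, hence by $C\Psi_1^{n-1}$. Young's inequality absorbs this term and the cross-product $\Psi_1^{n-1}\Psi_0$ (arising from multiplying the recursive estimate by $n\Psi_1^{n-1}$) into a small fraction of $-c\Psi_1^n$, at the cost of $C\Psi_0^n+C$; this yields \eqref{ineq:d.Psi_1^n}. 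Taking expectations, invoking \eqref{ineq:Psi_0^n} to dominate $\E\Psi_0(U(r))^n$, using $\Psi_0(U_0)\le C\Psi_1(U_0)$, and applying Gronwall produce a bound of the form $\E\Psi_1(U(t))^n\le e^{-c_{1,n}t}\Psi_1(U_0)^n+C\int_0^t e^{-c_{1,n}(t-s)}(e^{-c_{0,n}s}\Psi_1(U_0)^n + 1)\,\d s$; Lemma \ref{lem:tech:1} then handles the mixed exponential integral and delivers \eqref{ineq:Psi_1^n}.

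The exponential bound \eqref{ineq:exponential-bound:H^1_epsilon} is the most delicate step, since the $\Psi_0$-forcing in the recursive estimate prevents a direct mimicry of the strategy used in \eqref{ineq:exponential-bound:H^0_epsilon}. To circumvent it I would work instead with the combined Lyapunov functional $\Theta=\Psi_1+K\Psi_0$, choosing $K$ so large that the $C\Psi_0$ forcing is dominated by the dissipation $-Kc_{0,1}\Psi_0$ that arises from applying \eqref{ineq:L^epsilon.Psi_0} to $K\Psi_0$; this yields $\L^\mu\Theta\le -c\Theta+C''$. Applying It\^o to $g=\exp(\beta\Theta)$, the noise-squared correction $\tfrac{\beta^2}{2}g\sum_k\la D_u\Theta,Qe_k\ra_H^2$ is bounded by $C\beta^2 g\,\Psi_1\le C\beta^2 g\,\Theta$ via Cauchy--Schwarz and \nameref{cond:Q}, and for $\beta$ small this is absorbed into $-c\beta g\,\Theta$. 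Invoking \eqref{ineq:e^r(r-C)>e^r-C} and Gronwall then delivers exponential decay of $\E e^{\beta\Theta(U(t))}$, and the comparison $\Psi_1\le\Theta\le(1+CK)\Psi_1$, following from $\Psi_0\le C\Psi_1$, translates this back into \eqref{ineq:exponential-bound:H^1_epsilon} with the constant $C_{1,0}$ essentially playing the role of $1+CK$.

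The main obstacle throughout is the lower-order forcing $C\Psi_0$ in the recursive estimate, which is absent in the corresponding $\L^\mu\Psi_0$ bound and reflects the loss of a derivative incurred when transferring the dissipativity assumption $\sup\f'\le a_\f$ into $H^1$-dissipation. This term is precisely what requires the constant $C_{1,0}>1$ in front of $\Psi_1(U_0)$ inside the exponential of \eqref{ineq:exponential-bound:H^1_epsilon} (through the combined Lyapunov function), and what necessitates Lemma \ref{lem:tech:1} to close the polynomial moment bounds via Gronwall on time-integrals of the exponentially-decaying $\Psi_0$-moments.
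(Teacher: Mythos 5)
Your proposal is correct and follows essentially the same route as the paper: the same recursive estimate $\L^\mu\Psi_1\le -c\Psi_1+C\Psi_0+C'$ obtained from \nameref{cond:phi:3} plus interpolation, the same combined Lyapunov functional $\Psi_1+K\Psi_0$ (the paper's $g_1=\Psi_1+\beta_{1,0}\Psi_0$) to absorb the lower-order forcing before exponentiating, and the same Young/Gronwall/Lemma~\ref{lem:tech:1} machinery for the polynomial moments. The only differences are cosmetic (order of the two parts, and phrasing the absorption of $a_\f\la u,Au\ra_H$ as an interpolation inequality rather than Cauchy--Schwarz plus Young).
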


\begin{proof} We first prove part 1. For this, we apply $\L^\mu$, (see \eqref{form:L^epsilon}), to $\Psi_1(u,\eta)$ to obtain
\begin{align*}
\L^\mu\Psi_1(u,\eta)&=  -\kappa\|Au\|^2_{H}+(1-\kappa)\la \Tcal_\mu\eta,\eta\ra_{\Monemu}+\la \f'(u)\grad u,\grad u\ra_{H}+\frac{1}{2}\Tr(QAQ^*).
\end{align*}
In view of \nameref{cond:Q}, we readily have
\begin{align*}
\Tr(QAQ^*)<\infty.
\end{align*}
Recalling \eqref{ineq:<T.eta,eta>_(M^n)} (for $\beta=1$), it holds that
\begin{align*}
\la \Tcal_\mu\eta,\eta\ra_{\Monemu}\le -\frac{1}{2}\delta\|\eta\|^2_{\Monemu}.
\end{align*}
To deal with the nonlinear term, note that by \nameref{cond:phi:3} and Cauchy-Schwarz inequality,
\begin{align*}
\la \f'(u)\grad u,\grad u\ra_{H} \le a_\f\la A^{1/2}u,A^{1/2}u\ra_{H}=a_\f \la u,Au\ra_{H}\le \frac{2a_\f^2}{\kappa}\|u\|^2_H+\frac{1}{2}\kappa\|Au\|^2_{H}.
\end{align*}
It follows that
\begin{align}
\L^\mu\Psi_1(u,\eta)&\le  -\frac{1}{2}\kappa\|Au\|^2_{H}-\frac{1}{2}(1-\kappa)\delta\|\eta\|^2_{\Monemu}+\frac{2a_\f^2}{\kappa}\|u\|^2_H+\frac{1}{2}\Tr(QAQ^*) \notag \\
&\le -c_{1,1}\Psi_1(u,\eta)+C_{1,1}\Psi_0(u,\eta)+C_{1,1}.\label{ineq:L^epsilon.Psi_1}
\end{align}

Turning to~\eqref{ineq:exponential-bound:H^1_epsilon}, for $\beta_{1,0}$ to be chosen later, we consider
\begin{equation} \label{form:g_1(u,eta)}
g_1(u,\eta)=\Psi_1(u,\eta)+\beta_{1,0}\Psi_0(u,\eta).
\end{equation}
In view of~\eqref{ineq:L^epsilon.Psi_0} and~\eqref{ineq:L^epsilon.Psi_1}, observe that
\begin{align*}
\L^\mu g_1(u,\eta)&=  \L^\mu\Psi_1(u,\eta)+\beta_{1,0}\L^\mu \Psi_0(u,\eta)\\
&\le -c_{1,1}\Psi_1(u,\eta)+C_{1,1}\Psi_0(u,\eta)+C_{1,1}\\
&\quad\ +\beta_{1,0}\big(-c_{0,1}\Psi_0(u,\eta)+ C_{0,1}\big).
\end{align*}
By picking $\beta_{1,0}$ sufficiently large (independent of $\mu\in\Mdelta$), we obtain
\begin{align*}
\L^\mu g_1(u,\eta)\le -c g_1(u,\eta)+C.
\end{align*}
{Similarly to the proof of~\eqref{ineq:exponential-bound:H^0_epsilon}, we compute
\begin{align*}
\L^\mu e^{\beta g_1(u,\eta)}&= \beta e^{\beta g_1(u,\eta)}\L^\mu g_1(u,\eta)+\frac{1}{2}\beta^2e^{\beta g_1(u,\eta)}\sum_{k\ge 1}\big| \la u,Qe_k\ra_{H^1}+\beta_{1,0}\la u,Qe_k\ra_H \big|^2\\
&\le \beta e^{\beta g_1(u,\eta)}\Big(  -c g_1(u,\eta)+C+\frac{1}{2}\beta\sum_{k\ge 1}\big| \la u,Qe_k\ra_{H^1}+\beta_{1,0}\la u,Qe_k\ra_H \big|^2 \Big).
\end{align*}
By~\nameref{cond:Q}, we have
\begin{align*}
\frac{1}{2}\sum_{k\ge 1}\big| \la u,Qe_k\ra_{H^1}+\beta_{1,0}\la u,Qe_k\ra_H \big|^2&\le \sum_{k\ge 1}|\la u,Qe_k\ra_{H^1}|^2+\sum_{k\ge 1}\beta_{1,0}^2|\la u,Qe_k\ra_H|^2 \\
&\le \Tr(QAQ^*)\|A^{1/2}u\|^2_H+\beta_{1,0}^2\Tr(QQ^*)\|u\|^2_H\\
&\le \Tr(QAQ^*)\Psi_1(u,\eta)+\beta_{1,0}^2\Tr(QQ^*)\Psi_0(u,\eta)\\
&\le \tilde{C}\,g_1(u,\eta),
\end{align*}
for some constant $\tilde{C}>0$ independent of $\beta$ and $\mu\in\Mdelta$. It follows that
\begin{align*}
\L^\mu e^{\beta g_1(u,\eta)}\le \beta e^{\beta g_1(u,\eta)}\big(-c g_1(u,\eta)+\beta \tilde{C}g_1(u,\eta)+C\big).
\end{align*}
Since $c$ and $\tilde{C}$ are both independent of $\beta$,} by choosing $\beta_1$ sufficiently small, observe that for all $\beta\in (0,\beta_1)$, we obtain the bound
\begin{align*}
\L^\mu e^{\beta g_1(u,\eta)}\le \beta e^{\beta g_1(u,\eta)}\big(-c g_1(u,\eta)+C\big).
\end{align*}
We now invoke~\eqref{ineq:e^r(r-C)>e^r-C} to deduce further
\begin{align*}
\L^\mu e^{\beta g_1(u,\eta)}\le -c e^{\beta g_1(u,\eta)}+C.
\end{align*}
By It\^o's formula, this yields
\begin{align*}
\frac{\d}{\d t}\E \,e^{\beta g_1(U(t))}\le -c \, \E\, e^{\beta g_1(U(t))}+C,
\end{align*}
implying
\begin{align*}
\E \,e^{\beta g_1(U(t))} \le e^{-ct}\E \,e^{\beta g_1(U_0)}+C.
\end{align*}
Recalling the expression~\eqref{form:g_1(u,eta)} of $g_1$,
\begin{align*}
\Psi_1\le g_1\le \left(1+\frac{\beta_{1,0}}{\alpha_1}\right)\Psi_1,
\end{align*}
we immediately obtain~\eqref{ineq:exponential-bound:H^1_epsilon}. Now we prove 2.

With regard to~\eqref{ineq:d.Psi_1^n}--\eqref{ineq:Psi_1^n}, we proceed by induction as in the proof of~Lemma \ref{lem:moment-bound:H^0_epsilon}. For the base case $n=1$, from~\eqref{ineq:L^epsilon.Psi_1}, we see that
\begin{align}
\d \Psi_1(U(t))&= \L^\mu\Psi_1(U(t))\d t+ \la A^{1/2}u(t), A^{1/2}Q\d w(t)\ra_H \notag\\
&\le -\frac{1}{2}\kappa\|Au(t)\|^2_{H}\d t-\frac{1}{2}(1-\kappa)\delta\|\eta(t)\|^2_{\Monemu}\d t+\frac{2a_\f^2}{\kappa}\|u(t)\|^2_H\d t\notag\\
&\quad\ +\frac{1}{2}\Tr(QAQ^*)\d t+ \la A^{1/2}u(t), A^{1/2}Q\d w(t)\ra_H. \label{ineq:d.Psi_1}
\end{align}
This proves~\eqref{ineq:Psi_1^n} for $n=1$. Also, from~\eqref{ineq:L^epsilon.Psi_1}, we obtain using Sobolev embedding
\begin{align*}
&\frac{\d}{\d t}\E\Psi_1(U(t))\\
&\le -\frac{1}{2}\kappa\E\|Au(t)\|^2_{H}-\frac{1}{2}(1-\kappa)\delta\E\|\eta(t)\|^2_{\Monemu}+\frac{2a_\f^2}{\kappa}\E\|u(t)\|^2_H+\frac{1}{2}\Tr(QAQ^*)\\
&\le -\frac{1}{2}\kappa\alpha_1\E\|A^{1/2}u(t)\|^2_{H}-\frac{1}{2}(1-\kappa)\delta\E\|\eta(t)\|^2_{\Monemu}+\frac{2a_\f^2}{\kappa}\E\|u(t)\|^2_H+\frac{1}{2}\Tr(QAQ^*).
\end{align*}
By Gronwall's inequality,
\begin{align*}
\E\Psi_1(U(t))\le e^{-c t} \Psi_1(U_0)+ \frac{2a_\f^2}{\kappa}\int_0^t e^{-c(t-r)}\E\|u(r)\|^2_H\d r +C.
\end{align*}
In light of Lemma \ref{lem:moment-bound:H^0_epsilon} together with Sobolev embedding again
\begin{align*}
\E\|u(r)\|^2_H\le 2\E\Psi_0(U(r))\le  2e^{-c_{0,1}r}\Psi_0(U_0)+2C_{0,1}\le 2\alpha_1^{-1} e^{-c_{0,1}r}\Psi_1(U_0)+2C_{0,1}.
\end{align*}
So that
\begin{align*}
\int_0^t e^{-c(t-r)}\E\|u(r)\|^2_H\d r&\le 2\alpha_1^{-1}e^{-ct}\int_0^t e^{-(c_{0,1}-c)r}\d r\Psi_1(U_0)+C.
\end{align*}
By Lemma \ref{lem:tech:1} with $c'=c_{0,1}$, it follows that
\begin{equation} \label{ineq:e^-ct.int.e^(cr)}
e^{-ct}\int_0^t e^{-(c_{0,1}-c)r}\d r\le Ce^{-ct}.
\end{equation}
Hence, we may infer the existence of $c_{1,1}$ and $C_{1,1}$ such that
\begin{align*}
\E\Psi_1(\Phi(t))\le C_{1,1}e^{-c_{1,1} t}\Psi_1(U_0)+C_{1,1},
\end{align*}
which proves \eqref{ineq:Psi_1^n} for $n=1$.

Now assuming \eqref{ineq:d.Psi_1^n}--\eqref{ineq:Psi_1^n} hold up to $n-1$, consider the case $n\ge 2$. Similarly to the proof of Lemma \ref{lem:moment-bound:H^0_epsilon}, we first compute partial derivatives of $\Psi_1(u,\eta)^n$ along a direction $\xi\in \H^1$ as follows:
\begin{align*}
\la D_u\Psi_1(u,\eta)^n,\pi_1\xi\ra_{H^1}&=n\Psi_1(u,\eta)^{n-1}\la u,\pi_1\xi\ra_{H^1},\\
\la D_\eta \Psi_1(u,\eta)^n,\pi_2\xi\ra_{\Monemu}&=n\Psi_1(u,\eta)^{n-1}\la\eta,\pi_2\xi\ra_{\Monemu}\\
D_{uu}\Psi_1(u,\eta)^n(\xi)&=n\Psi_1(u,\eta)^{n-1}\pi_1\xi+n(n-1)\Psi_1(u,\eta)^{n-2}\la u,\pi_1\xi\ra_{H^1} u.
\end{align*}
So that applying $\L^\mu$, (see \eqref{form:L^epsilon}), to $\Psi_1(u,\eta)^n$ yields the identity
\begin{align*}
\L^\mu\Psi_1(u,\eta)^n
&=n\Psi_1(u,\eta)^{n-1}\Big( -\kappa\|Au\|^2_{H}+(1-\kappa)\la \Tcal_\mu\eta,\eta\ra_{\Monemu}\\
&\quad\ +\la \f'(u)\grad u,\grad u\ra_{H}+\frac{1}{2}\Tr(QAQ^*)\Big)\\
&\quad\ +\frac{1}{2} n(n-1)\Psi_1(u,\eta)^{n-2}\sum_{k\ge 1}|\la u,Qe_k\ra_{H^1}|^2.
\end{align*}
As in the case $n=1$, cf.~\eqref{ineq:L^epsilon.Psi_1}, we readily have the bound
\begin{align*}
&-\kappa\|Au\|^2_H+\la \Tcal_\mu\eta,\eta\ra_{\Monemu}+\la \f'(u)\grad u,\grad u\ra_{H}\\
&\le -\frac{1}{2}\kappa\|Au\|^2_{H}-\frac{1}{2}(1-\kappa)\delta\|\eta\|^2_{\Monemu}+\frac{2a_\f^2}{\kappa}\|u\|^2_H.
\end{align*}
Also,
\begin{align*}
\frac{1}{2}\sum_{k\ge 1}|\la u,Qe_k\ra_{H^1}|^2\le \frac{1}{2}\Tr(QAQ^*)\|A^{1/2}u\|^2_H\le \Tr(QAQ^*)\Psi_1(u,\eta).
\end{align*}
It follows that
\begin{align*}
&\L\Psi_1(u,\eta)^n\\
&\le n\Psi_1(u,\eta)^{n-1}\Big( -\frac{1}{2}\kappa\|Au\|^2_{H}-\frac{1}{2}(1-\kappa)\delta\|\eta\|^2_{\Monemu}+\frac{2a_\f^2}{\kappa}\|u\|^2_H+\frac{1}{2}\Tr(QAQ^*)\Big)\\
&\quad\ + n(n-1)\Tr(QAQ^*)\Psi_1(u,\eta)^{n-1}\\
&\le -c\Psi_{1}(u,\eta)^n+\frac{2na_\f^2}{\kappa}\Psi_{1}(u,\eta)^{n-1}\|u\|^2_H+n(n-\frac{1}{2} )\Tr(QAQ^*)\Psi_{1}(u,\eta)^{n-1}.
\end{align*}
By Young's inequality, the last term on the above right--hand side can be subsumed into $-c\Psi_1(u,\eta)^n$. Likewise
\begin{align*}
\frac{2na_\f^2}{\kappa}\Psi_1(u,\eta)^{n-1}\|u\|^2_H\le
{\frac{c}{100}}\Psi_1(u,\eta)^n+C\|u\|^{2n}_H\le
{\frac{c}{100}}\Psi_1(u,\eta)^n+C
\Psi_0(u,\eta)^n.
\end{align*}
Together with Sobolev embedding, we may infer the existence of positive constants $c$ and $C$ such that
\begin{align}\label{ineq:L^epsilon.Psi_1^n}
\L^\mu\Psi_1(u,\eta)^n&\le -c\Psi_1(u,\eta)^n+C\Psi_0(u,\eta)^n+C.
\end{align}
We now employ It\^o's formula making use of~\eqref{ineq:L^epsilon.Psi_1^n} to see that
\begin{align*}
\d \Psi_1(U(t))
&=\L^\mu\Psi_1(U(t))^n\d t+n\Psi_1(U(t))^{n-1}\la A^{1/2}u(t),A^{1/2}Q\d w(t)\ra_H\\
&\le -c\Psi_1(U(t))^n\d t+C\Psi_0(U(t))^n\d t+C\d t\\
&\quad\ +n\Psi_1(U(t))^{n-1}\la A^{1/2}u(t),A^{1/2}Q\d w(t)\ra_H,
\end{align*}
which establishes~\eqref{ineq:d.Psi_1^n}. In addition, from~\eqref{ineq:L^epsilon.Psi_1^n}, we have
\begin{align*}
\frac{\d}{\d t}\E\Psi_1(U(t))^n&\le -c\E\Psi_1(U(t))^n+C\E\Psi_0(U(t))^n+C,
\end{align*}
whence
\begin{align*}
\E\Psi_1(U(t))^n &\le e^{-ct}\Psi_1(U_0) +C\int_0^t e^{-c(t-r)}\E\Psi_0(U(r))^n\d r+C.
\end{align*}
We now invoke Lemma \ref{lem:moment-bound:H^0_epsilon} to see that
\begin{align*}
\int_0^t e^{-c(t-r)}\E\Psi_0(U(r))^n\d r\le \int_0^t e^{-c(t-r)}e^{-c_{0,n}r}\d r \Psi_0(U_0)^n+C.
\end{align*}
Since $\Psi_0(U_0)$ is dominated by $\Psi_1(U_0)$, reasoning as in the base case $n=1$ above, we also obtain
\begin{align*}
\E\Psi_1(U(t))^n\le C_{1,n}e^{-c_{1,n} t}\Psi_1(U_0)^n+C_{1,n},
\end{align*}
for suitable constants $c_{1,n}$ and $C_{1,n}$ that are independent of $\mu\in\Mdelta$, $(U_0)$ and $t$. This proves \eqref{ineq:Psi_1^n} for all $n\ge 2$, {as desired}.
\end{proof}

We now restrict to dimension $d\le 3$ and establish moment bounds in higher regularity of the solution under the extra assumptions~\nameref{cond:phi:d=3} and \nameref{cond:Q:higher_regularity}. In particular, we will employ the result in~Lemma \ref{lem:moment-bound:H^m:d=3} below to prove Theorem \ref{thm:regularity} in Section \ref{sec:regularity}.

\begin{lemma}\label{lem:moment-bound:H^m:d=3}
Let $d\le 3$ and assume the hypotheses of Theorem \ref{thm:well-posed}. Suppose further that \nameref{cond:phi:d=3}, \nameref{cond:Q:higher_regularity} hold for $m\ge 2$. Then for all $k=2,\dots,m$ and $n\ge 1$, there exist constants $c_{k,n}$, $C_{k,n}$ and integers $q_{k,n}$, independent of $U_0$, $t$ and $\mu\in\Mdelta$, such that
\begin{equation}  \label{ineq:d.Psi_k^n}
\emph{d} \Psi_k(U(t))^n\le -c_{k,n} \Psi_k(U(t))^n\emph{d} t+C_{k,n}\emph{d} t+C_{k,n}\Psi_{k-1}(U(t))^{q_{k,n}}\emph{d} t+\emph{d} M_{k,n}(t),
\end{equation}
and
\begin{equation} \label{ineq:moment-bound:H^k}
\E\Psi_k(U(t))^n \le C_{k,n}e^{-c_{k,n}t}\Psi_k(U_0)^{q_{k,n}}+C_{k,n},
\end{equation}
where $\Psi_{k}$ {is given as in} \eqref{form:Psi_m} and $M_{k,n}(t)$ is the semi-martingale given by
\begin{equation} \label{form:M_(k,n)}
M_{k,n}(t)=\int_0^t n\Psi_k(U(r))^{n-1}\la A^{k/2}u(t),A^{k/2}Q\emph{d} w(r)\ra_H.
\end{equation}
Furthermore, it holds that
\begin{equation} \label{ineq:moment-bound:H^m:d=3}
\begin{aligned}
&\E \Psi_m(U(t))^n+c_{m,n}\int_0^t \E \Psi_m(U(r))^{n-1}\big(\|A^{\frac{m+1}{2}}u(r)\|^2_H+\|\eta(r)\|^2_{M^{m}_\mu}\big)\emph{d} r\\
& \le C_{m,n}\Psi_m(U_0)^{q_{m,n}}+C_{m,n}t,
\end{aligned}
\end{equation}
and that
\begin{equation}\label{ineq:exponential-bound:H^m:d=3}
\E \exp\Big\{\beta_{m,0}\Psi_m(U(t))^{\gamma_{m,0}}\Big\}\le C_{m,0}\exp\Big\{C_{m,0}e^{-c_{m,0}t} \big(1+\Psi_m(U_0)^{q_{m,0}}\big)\Big\},
\end{equation}
for some positive constants $\gamma_{m,0}\in (0,1)$, $\beta_{m,0}$, $q_{m,0}$, $c_{m,0}$, $C_{m,0}$
also independent of $U_0$, $t$ and $\mu\in\Mdelta$.
\end{lemma}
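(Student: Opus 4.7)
The proof uses an outer induction on the regularity index $k\in\{2,\ldots,m\}$, with base case $k=1$ supplied by Lemma \ref{lem:moment-boud:H^1_epsilon}, and an inner induction on the moment exponent $n$. The dimensional restriction $d\le 3$ enters through the Sobolev embedding $H^2\hookrightarrow L^\infty$, while \nameref{cond:phi:d=3} supplies polynomial growth of the derivatives $\f^{(j)}$ and \nameref{cond:Q:higher_regularity} ensures $\Tr(QA^mQ)<\infty$.

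I would first compute $\L^\mu\Psi_k$. Using $D_u\Psi_k=A^k u$, $D_\eta\Psi_k=(1-\kappa)A^k\eta$ (in the $\Mzeromu$-duality), $D_{uu}\Psi_k=A^k$, and observing that the two cross terms produced by the convolution in \eqref{form:L^epsilon} and the transport equation cancel by self-adjointness of $A$, one obtains
\begin{align*}
\L^\mu\Psi_k(u,\eta)=-\kappa\|A^{(k+1)/2}u\|_H^2+(1-\kappa)\la\Tcal_\mu\eta,\eta\ra_{M^k_\mu}+\la\f(u),A^ku\ra_H+\tfrac12\Tr(QA^kQ^*).
\end{align*}
The transport term is dominated via \eqref{ineq:<T.eta,eta>_(M^n)} and the trace is finite by \nameref{cond:Q:higher_regularity}. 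The core task is to estimate $\la\f(u),A^ku\ra_H=\la A^{k/2}\f(u),A^{k/2}u\ra_H$. Expanding $A^{k/2}\f(u)$ by the chain rule yields a finite sum of monomials $\f^{(j)}(u)\prod_i\partial^{\alpha_i}u$ with $\sum_i|\alpha_i|=k$, each paired against $A^{k/2}u$. For the leading piece involving $\f'(u)$ I employ the one-sided bound $\f'\le a_\f$ from \nameref{cond:phi:3}, followed by the interpolation $\|A^{k/2}u\|_H^2\le \epsilon\|A^{(k+1)/2}u\|_H^2+C_\epsilon\|u\|_{H^{k-1}}^2$, which allows the derivative-top term to be absorbed into the dissipation. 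For $j\ge 2$, I place $\f^{(j)}(u)$ in $L^\infty$ using $H^2\hookrightarrow L^\infty$ together with $|\f^{(j)}(u)|\le c(1+|u|^{p_j})$ from \nameref{cond:phi:d=3}, distribute the intermediate derivatives in $L^p$ by Gagliardo--Nirenberg in $d\le 3$, and absorb any top-order factor by Young's inequality. This yields
\begin{align*}
\L^\mu\Psi_k(u,\eta)\le -c_k\Psi_k(u,\eta)+C_k\Psi_{k-1}(u,\eta)^{q_k}+C_k.
\end{align*}

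Passing to $\Psi_k^n$ by the chain rule and repeating the same absorption produces $\L^\mu\Psi_k^n\le -c_{k,n}\Psi_k^n+C_{k,n}\Psi_{k-1}^{q_{k,n}}+C_{k,n}$, which together with It\^o's formula gives \eqref{ineq:d.Psi_k^n}. Taking expectations and applying Gronwall, and then inserting the inductive bound on $\E\Psi_{k-1}(U(r))^{q_{k,n}}$ together with Lemma \ref{lem:tech:1} to reconcile the different exponential rates, yields \eqref{ineq:moment-bound:H^k}. For the integrated bound \eqref{ineq:moment-bound:H^m:d=3} I specialize to $k=m$ and refrain from fully absorbing the dissipation: I keep $\tfrac\kappa2\|A^{(m+1)/2}u\|_H^2$ and $\tfrac{(1-\kappa)\delta}{4}\|\eta\|_{M^m_\mu}^2$ as positive quantities on the left before integrating in time and invoking \eqref{ineq:moment-bound:H^k} at level $m-1$, whose time-integral grows at most linearly.

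The exponential bound \eqref{ineq:exponential-bound:H^m:d=3} is the subtlest point: the polynomial $\Psi_{k-1}^{q_{k,n}}$ driver precludes a genuine exponential moment of $\Psi_m$ itself, so I instead apply It\^o's formula to $\exp(\beta\Psi_m^\gamma)$ with a suitably small $\gamma_{m,0}\in(0,1)$. The pre-factor $\beta\gamma\Psi_m^{\gamma-1}$ downgrades the polynomial driver so that, after Young's inequality, it is controlled by a small multiple of $\Psi_m^\gamma\exp(\beta\Psi_m^\gamma)$ plus a polynomial in $\Psi_{m-1}$ times the exponential factor; iterating Gronwall together with the lower-level exponential bounds \eqref{ineq:exponential-bound:H^0_epsilon}--\eqref{ineq:exponential-bound:H^1_epsilon} and the polynomial moments at intermediate levels then closes the estimate. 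The main analytical obstacle throughout is the nonlinear term at each $k$: careful bookkeeping of derivative counts and sharp Gagliardo--Nirenberg exponents is required to guarantee that the right-hand side depends on $\Psi_{k-1}$ only (and not on $\Psi_k$ with a non-absorbable constant), for otherwise the outer induction on $k$ would fail to close.
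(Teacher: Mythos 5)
Your treatment of \eqref{ineq:d.Psi_k^n}, \eqref{ineq:moment-bound:H^k}, and \eqref{ineq:moment-bound:H^m:d=3} follows the paper's strategy in all essential respects: compute $\L^\mu\Psi_k$, expand the nonlinear term $\la A^{k/2}\f(u),A^{k/2}u\ra_H$ via the chain rule (the paper isolates this as Lemma~\ref{lem:A^(n)phi(u)} and splits into even/odd $k$), place $\f^{(j)}(u)$ in $L^\infty$ via $H^2\hookrightarrow L^\infty$ in $d\le 3$, interpolate, absorb into the $-\kappa\|A^{(k+1)/2}u\|^2_H$ dissipation, and close the drift inequality with a $\Psi_{k-1}$-dependent driver; then Gronwall, the recursion, and Lemma~\ref{lem:tech:1}. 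The paper organizes the induction a little differently (it proves $m=2$ in a separate lemma and treats $m=3$ explicitly before inducting from $m\ge 4$), but this is a cosmetic difference.

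The gap is in your handling of the exponential bound \eqref{ineq:exponential-bound:H^m:d=3}. Applying It\^o directly to $\exp(\beta\Psi_m^\gamma)$ produces a drift term of the form $\beta\gamma\,\Psi_m^{\gamma-1}\Psi_{m-1}^{q}\exp(\beta\Psi_m^\gamma)$, and this cannot be closed: Poincar\'e only gives $\Psi_{m-1}\le\alpha_1^{-1}\Psi_m$, so after substitution the exponent $\gamma-1+q$ exceeds $\gamma$ whenever $q>1$ (which is the generic case), and the term cannot be absorbed into the negative drift $-c\,\Psi_m^\gamma\exp(\beta\Psi_m^\gamma)$. Nor can you peel off the polynomial factor by H\"older or Young: any attempt to bound $\E[\Psi_{m-1}^q\exp(\beta\Psi_m^\gamma)]$ reintroduces $\E\exp(p\beta\Psi_m^\gamma)$ for some $p>1$, which is precisely the quantity you are trying to prove finite. "Iterating Gronwall together with the lower-level exponential bounds" does not resolve this, because the lower-level exponential bounds concern $\Psi_0$, $\Psi_1$, not $\Psi_m$, and the problematic exponential factor lives at level $m$. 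The paper breaks the circularity by \emph{first} constructing the telescoping Lyapunov functional $G=\Psi_m+\sum_{i=0}^{m-1}b_i\Psi_i^{n_i}$, with $b_i$ and $n_i$ chosen recursively so that all the $\Psi_{k-1}^{q_{k,n}}$ drivers cancel and one obtains the purely linear drift inequality $\d G\le -c\,G\,\d t+C\,\d t+\d M_G(t)$; only then does it apply It\^o to $(1+G)^\gamma$ with $\gamma\in(0,1)$ small (so that the quadratic variation of $M_G$, which is order $G$, is dominated), and close via the exponential martingale inequality rather than by taking expectations and invoking Gronwall. This construction of $G$ is the missing idea: without it, the polynomial driver at level $m$ remains entangled with the exponential weight and the estimate does not close.
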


Ultimately, the proof of Lemma \ref{lem:moment-bound:H^m:d=3} will proceed by induction, though we treat the case $m=2$ as a special case. Let us state and prove this special case now.

\begin{lemma} \label{lem:moment-boud:H^2xM^2:d=3}
{Let} $d\le 3$ {and assume the hypothesis} of Theorem \ref{thm:well-posed}. {Additionally assume} that \nameref{cond:phi:d=3}, \nameref{cond:Q:higher_regularity} hold for $m\ge 2$. Given $U_0=(u_0,\eta_0)\in \Htwomu$, there exist positive constants $c_{3,n},\,C_{3,n}$ and positive integer $q_{3,n}$ independent of $U_0$, $t$ and $\mu\in\Mdelta$ such that
\begin{equation}  \label{ineq:d.Psi_2^n}
\emph{d} \Psi_2(U(t))^n\le -c_{2,n} \Psi_2(U(t))^n\emph{d} t+C_{2,n}\emph{d} t+C_{2,n}\Psi_{1}(U(t))^{q_{2,n}}\emph{d} t+\emph{d} M_{2,n}(t),
\end{equation}
where $\Psi_{\ell}$ {is given as in} \eqref{form:Psi_m} and $M_{2,n}(t)$ is the semi-martingale given by
\begin{equation} \label{form:M_(2,n)}
M_{2,n}(t)=\int_0^t n\Psi_2(U(r))^{n-1}\la Au(t),A Q\emph{d} w(r)\ra_H.
\end{equation}
Furthermore,
\begin{equation} \label{ineq:moment-bound:H^2:d=3}
\E\Psi_2(U(t))^n \le C_{2,n}e^{-c_{2,n}t}\Psi_2(U_0)^{q_{2,n}}+C_{2,n}.
\end{equation}
\end{lemma}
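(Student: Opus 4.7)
The plan is to mirror the proof of Lemma~\ref{lem:moment-boud:H^1_epsilon}, now one regularity level higher. Working in the Galerkin approximation so that all pairings are classical, I would apply the generator~\eqref{form:L^epsilon} to $\Psi_2(u,\eta)=\tfrac{1}{2}\|u\|^2_{H^2}+\tfrac{1}{2}(1-\kappa)\|\eta\|^2_{\Mtwomu}$. The Fr\'echet derivatives give $D_u\Psi_2=A^2u$ and $D_\eta\Psi_2=(1-\kappa)\eta$, and as in the $H^1$ calculation the two $(1-\kappa)\la u,\eta\ra_{\Mtwomu}$ cross terms cancel, leaving
\begin{equation*}
\L^\mu\Psi_2(u,\eta)= -\kappa\|A^{3/2}u\|_H^2 + (1-\kappa)\la\Tcal_\mu\eta,\eta\ra_{\Mtwomu} + \la\f(u),A^2u\ra_H + \tfrac{1}{2}\Tr(QA^2Q^*).
\end{equation*}
The memory term is controlled by $-\tfrac{1}{2}\delta\|\eta\|^2_{\Mtwomu}$ via~\eqref{ineq:<T.eta,eta>_(M^n)} with $\beta=2$, and $\Tr(QA^2Q^*)<\infty$ by~\nameref{cond:Q:higher_regularity} with $m\ge 2$.

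The hard part is the nonlinear contribution $\la\f(u),A^2u\ra_H$, and this is where the restrictions $d\le 3$ and $p_1<4$ enter. Since $\f(0)=0$ from~\nameref{cond:phi} and $u|_{\partial\domain}=0$, one has $\f(u)\in H^1_0$, so the pairing may be recast as
\begin{equation*}
\la\f(u),A^2u\ra_H = \la A^{1/2}\f(u),A^{3/2}u\ra_H \le C\|\f'(u)\grad u\|_{L^2}\|A^{3/2}u\|_H.
\end{equation*}
Using the growth $|\f'(x)|\le c(1+|x|^{p_1})$ from~\nameref{cond:phi:d=3} together with H\"older, I would bound
\begin{equation*}
\|\f'(u)\grad u\|_{L^2}\le C\|u\|_{H^1}(1+\|u\|_{L^\infty}^{p_1}).
\end{equation*}
In $d\le 3$ the interpolation $\|u\|_{L^\infty}\le C\|u\|^{3/4-\varepsilon/2}_{H^1}\|u\|^{1/4+\varepsilon/2}_{H^3}$ is available for small $\varepsilon>0$, and Young's inequality absorbs the $\|u\|_{H^3}=\|A^{3/2}u\|_H$ factor back into the dissipation $-\kappa\|A^{3/2}u\|_H^2$ precisely when $p_1(1/4+\varepsilon/2)<1$, i.e.\ $p_1<4$. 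The residual pieces are polynomial in $\|u\|_{H^1}$ and $\|\eta\|_{\Monemu}$ and can be bundled into $\Psi_1(u,\eta)^{q}$ for an integer $q=q(p_1,n)$, giving
\begin{equation*}
\L^\mu\Psi_2(u,\eta)\le -c\,\Psi_2(u,\eta) + C + C\Psi_1(u,\eta)^{q}.
\end{equation*}

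For general $n\ge 2$, I would compute the Fr\'echet derivatives of $\Psi_2^n$ exactly as in the proof of Lemma~\ref{lem:moment-boud:H^1_epsilon} and control the extra It\^o correction $\tfrac{1}{2}n(n-1)\Psi_2^{n-2}\sum_k|\la u,Qe_k\ra_{H^2}|^2 \le c\Psi_2^{n-1}\Tr(QA^2Q^*)$ by Young's, absorbing it into $-c\Psi_2^n$. The resulting differential inequality, together with It\^o's formula, yields~\eqref{ineq:d.Psi_2^n}. Taking expectations, applying Gronwall's inequality, substituting the moment bound $\E\Psi_1(U(r))^{q_{2,n}}\le C e^{-c r}\Psi_1(U_0)^{q_{2,n}}+C$ from Lemma~\ref{lem:moment-boud:H^1_epsilon}, and handling the convolution $\int_0^t e^{-c(t-r)}e^{-c'r}\d r$ through Lemma~\ref{lem:tech:1}, delivers~\eqref{ineq:moment-bound:H^2:d=3}. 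All of the above is performed first on the Galerkin truncation, where every integration by parts and fractional power pairing is classical, and the uniform-in-$N$ bounds are passed to the limit through Theorem~\ref{thm:well-posed}.
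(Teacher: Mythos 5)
Your proposal is correct and follows essentially the same route as the paper: apply $\L^\mu$ to $\Psi_2$, control the memory term via \eqref{ineq:<T.eta,eta>_(M^n)}, bound the nonlinear pairing by $\|\f'(u)\grad u\|_{L^2}\|A^{3/2}u\|_H$, use an $L^\infty$ interpolation in $d\le 3$ so that Young's inequality absorbs the top-order factor exactly when $p_1<4$, leaving a $\Psi_1^q$ remainder, and then iterate with It\^o, Gronwall, Lemma \ref{lem:moment-boud:H^1_epsilon} and Lemma \ref{lem:tech:1}. The only (immaterial) difference is that the paper reaches the same exponent restriction via Agmon's inequality $\|u\|_{L^\infty}\le c\|A^{1/2}u\|_H^{1/2}\|Au\|_H^{1/2}$ followed by interpolating $\|Au\|_H$ between $\|A^{1/2}u\|_H$ and $\|A^{3/2}u\|_H$, whereas you interpolate $L^\infty$ directly between $H^1$ and $H^3$ with a small $\varepsilon$.
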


\begin{proof} Similarly to the proof of Lemma \ref{lem:moment-boud:H^1_epsilon}, we start with the case $n=1$. Applying $\L^\mu$, cf.~\eqref{form:L^epsilon}, to $\Psi_2$ as in \eqref{form:Psi_m} gives
\begin{align*}
\L^\mu \Psi_2(u,v)&= -\kappa\|A^{3/2}u\|^2_H+(1-\kappa)\la \Tcal_\mu\eta,\eta\ra_{\Mtwomu}\\
&\quad\ +\la \f'(u)\grad u, \grad Au\ra_H+\frac{1}{2}\Tr(QA^2Q^*).
\end{align*}
We note that $\Tr(QA^2Q^*)<\infty$, by virtue of \nameref{cond:Q:higher_regularity}. Also, by \eqref{ineq:<T.eta,eta>}, we readily have the useful estimate
\begin{align*}
\la \Tcal_\mu\eta,\eta\ra_{\Mtwomu}\le -\frac{\delta}{2}\|\eta\|^2_{\Mtwomu}.
\end{align*}
It remains to bound $\la \f'(u)\grad u,\grad Au\ra_H $. To this end, we recall from \nameref{cond:phi:d=3} that $\f'(u)\le c(1+|u|^{p_1})$ where $p_1<4$. We then estimate
\begin{align*}
\la \f'(u)\grad u,\grad Au\ra_H\le c(1+\|u\|^{p_1}_{\infty})\|A^{1/2}u\|_H \|A^{3/2}u\|_H.
\end{align*}
Furthermore,
\begin{align*}
\|A^{1/2}u\|_H \|A^{3/2}u\|_H\le {c}\|A^{1/2}u\|^2_H+\frac{1}{100}\kappa\|A^{3/2}u\|^2_H.
\end{align*}
Using Agmon's inequality and Sobolev interpolation in $d\le 3$, we have
\begin{align*}
\|u\|^{p_1}_{\infty}\|A^{1/2}u\|_H \|A^{3/2}u\|_H&\le c\|A^{1/2}u\|^{\frac{p_1}{2}}_H \|Au\|^{\frac{p_1}{2}}_H \|A^{1/2}u\|_H \|A^{3/2}u\|_H\\
&= c\|A^{1/2}u\|^{\frac{p_1+2}{2}}_H \|Au\|^{\frac{p_1}{2}}_H  \|A^{3/2}u\|_H\\
&\le c\|A^{1/2}u\|^{\frac{p_1+2}{2}}_H \|A^{1/2}u\|^{\frac{p_1}{4}}_H\|A^{3/2}u\|^{\frac{p_1}{4}}_H  \|A^{3/2}u\|_H\\
&=c\|A^{1/2}u\|^{\frac{3p_1+4}{4}}_H \|A^{3/2}u\|^{\frac{p_1+4}{4}}_H \\
&\le
{c}\|A^{1/2}u\|^{\frac{6p_1+8}{4-p_1}}_H+
{\frac{1}{100}}\kappa\|A^{3/2}u\|^{2}_H ,
\end{align*}
where in the last estimate above, we employed Young inequality with the fact that $p_1<4$. {We may thus} infer the existence of a {positive} integer $q$ such that
\begin{align*}
&\la \grad(\f(u)), \grad A u\ra_H \notag \\
&\le c\|A^{1/2}u\|^{2q}_H+\frac{1}{2}\|A^{3/2}u\|^2_H+c\notag \\
&\le c\Psi_1(u,\eta)^{q}+\frac{1}{2}\kappa\|A^{3/2}u\|^2_H+c .
\end{align*}
We now combine everything to arrive at the estimate
\begin{align}
&\L^\mu \Psi_2(u,\eta) \notag \\
&= -\kappa\|A^{3/2}u\|^2_H+(1-\kappa)\la \Tcal_\mu\eta,\eta\ra_{\Mtwomu}+\la A^{1/2}(\f(u)), A^{3/2}u\ra_H+\frac{1}{2}\Tr(QA^2Q^*) \nonumber\\
&\le -\frac{1}{2}\kappa\|A^{3/2}u\|^2_H-\frac{1}{2}(1-\kappa)\delta\|\eta\|^2_{\Mtwomu}+c\Psi_1(u,\eta)^{q}+\frac{1}{2}\Tr(QA^2Q^*)+c, \label{ineq:L^epsilon.Psi_2}
\end{align}
whence by It\^o's formula
\begin{align*}
\d \Psi_2(U(t))&= \L^\mu\Psi_2(U(t))\d t+\la Au(t),AQ\d w(t)\ra_H\\
&\le -c\Psi_2(U(t))\d t +C\Psi_1(U(t))^q\d t+C\d t+\la Au(t),AQ\d w(t)\ra_H.
\end{align*}
This establishes~\eqref{ineq:d.Psi_2^n} for $n=1$. Also, by Gronwall's inequality
\begin{align*}
\E\Psi_2(U(t))\le e^{-ct}\Psi_2(U_0)+C\int_0^te^{-c(t-r)}\E\Psi_1(U(r))^q\d r+C.
\end{align*}
In view of Lemma \ref{lem:moment-boud:H^1_epsilon},
\begin{align*}
\E\Psi_1(U(r))^q\le C_{1,q}e^{-c_{1,q}r}\Psi_1(U_0)^q+C_{1,q}\le Ce^{-cr}\Psi_2(U_0)^q+C.
\end{align*}
Reasoning as in \eqref{ineq:e^-ct.int.e^(cr)}, we see that
\begin{align*}
\int_0^te^{-c(t-r)}\E\Psi_1(U(r))^q\d r\le Ce^{-ct}\Psi_2(U_0)^q+C.
\end{align*}
It follows that we may infer the existence of $c_{2,1}$ and $C_{2,1}$ such that
\begin{align*}
\E\Psi_2(U(r))\le C_{2,1}e^{-c_{2,1}t}\Psi_2(U_0)^{q_{2,1}}+C_{2,1}.
\end{align*}

We now consider the general case $n\ge 2$. Similarly to the proof of Lemma \ref{lem:moment-boud:H^1_epsilon}, the partial derivatives of $\Psi_2(u,\eta)^n$ along a direction $\xi\in \Htwomu$ are given by:
\begin{align*}
\la D_u\Psi_2(u,\eta)^n,\pi_1\xi\ra_{H^2}&=n\Psi_2(u,\eta)^{n-1}\la u,\pi_1\xi\ra_{H^2},\\
\la D_\eta \Psi_2(u,\eta)^n,\pi_2\xi\ra_{\Mtwomu}&=n\Psi_2(u,\eta)^{n-1}\la\eta,\pi_2\xi\ra_{\Mtwomu}\\
D_{uu}\Psi_2(u,\eta)^n(\xi)&=n\Psi_2(u,\eta)^{n-1}\pi_1\xi+n(n-1)\Psi_2(u,\eta)^{n-2}\la u,\pi_1\xi\ra_{H^2} u.
\end{align*}
So,
\begin{align*}
&\L^\mu\Psi_2(u,\eta)^n\\
&=n\Psi_2(u,\eta)^{n-1}\Big( -\kappa\|A^{3/2}u\|^2_{H}+(1-\kappa)\la \Tcal_\mu\eta,\eta\ra_{\Mtwomu}+\la A^{1/2}(\f(u)),A^{3/2}u\ra_{H}\\
&\quad\ +\frac{1}{2}\Tr(QA^2Q^*)\Big)\\
&\quad\ +\frac{1}{2} n(n-1)\Psi_2(u,\eta)^{n-2}\sum_{k\ge 1}|\la u,Qe_k\ra_{H^2}|^2\\&=I_1+I_2.
\end{align*}
In view of \eqref{ineq:L^epsilon.Psi_2}, we have
\begin{align*}
I_1&\le -n\Psi_2(u,\eta)^{n-1}\big(\frac{1}{2}\kappa\|A^{3/2}u\|^2_H+\frac{1}{2}(1-\kappa)\delta\|\eta\|^2_{\Mtwomu}\big)\\
&\quad\ +n\Psi_2(u,\eta)^{n-1}\big(c\Psi_1(u,\eta)^q+\frac{1}{2}\Tr(QA^2Q^*)+c\big).
\end{align*}
To further estimate the second term on the above right-hand side, we employ Young inequality to see that
\begin{align*}
&n\Psi_2(u,\eta)^{n-1}\big(c\Psi_1(u,\eta)^q+\frac{1}{2}\Tr(QA^2Q^*)+c\big)\\
&\le \frac{1}{100}n\min\{\kappa\alpha_1,(1-\kappa)\delta\}\Psi_2(u,\eta)^n+{c}
\big(\Psi_1(u,\eta)^{qn}+1\big).
\end{align*}
Likewise
\begin{align*}
I_2&=\frac{1}{2} n(n-1)\Psi_2(u,\eta)^{n-2}\sum_{k\ge 1}|\la u,Qe_k\ra_{H^2}|^2\\
&\le \frac{1}{2} n(n-1)\Tr(QA^2Q^*)\Psi_2(u,\eta)^{n-2}\|u\|^2_{H^2}\\
&\le n(n-1)\Tr(QA^2Q^*)\Psi_2(u,\eta)^{n-1}\\
&\le \frac{1}{100}n\min\{\kappa\alpha_1,(1-\kappa)\delta\}\Psi_2(u,\eta)^n+c.
\end{align*}
We now combine the estimates on $I_1$ and $I_2$ to infer the bound
\begin{align} \label{ineq:L^epsilon.Psi_2^n}
\L^\mu\Psi_2(u,\eta)^n \le -c\Psi_2(u,\eta)^n+C\Psi_1(u,\eta)^{qn}+C.
\end{align}
By It\^o's formula, we immediately obtain~\eqref{ineq:d.Psi_2^n}. Also,
\begin{align*}
\frac{\d}{\d t}\E\Psi_2(U(t))^n &\le -c\E\Psi_2(U(t))^n+C\E\Psi_1(U(t))^{qn}+C,
\end{align*}
whence
\begin{align} \label{ineq:moment-bound:H2:a}
\E\Psi_2(U(t))^n\le e^{-ct}\Psi_2(\x)^n+\int_0^t e^{-c(t-r)}\E\Psi_1(U(r))^{qn}\d r+C.
\end{align}
We invoke Lemma \ref{lem:moment-boud:H^1_epsilon} again to see that
\begin{align*}
\E\Psi_1(U(r))^{qn}\le C_{1,qn}e^{-c_{1,qn}r}\Psi_1(\x)^{qn}+C_{1,qn}\le Ce^{-ct}\Psi_2(\x)^{qn}+C.
\end{align*}
Similarly to \eqref{ineq:e^-ct.int.e^(cr)},
\begin{align*}
\int_0^te^{-c(t-r)}\E\Psi_1(U(r))^{qn}\d r\le Ce^{-ct}\Psi_2(\x)^{qn}+C,
\end{align*}
which together with \eqref{ineq:moment-bound:H2:a} proves \eqref{ineq:moment-bound:H^2:d=3}, thereby finishing the proof.
\end{proof}

Now let us prove Lemma \ref{lem:moment-bound:H^m:d=3}.

\begin{proof}[Proof of Lemma \ref{lem:moment-bound:H^m:d=3}]
We proceed by induction on $m$. {We note that the case $m=2$ was treated in} Lemma \ref{lem:moment-boud:H^2xM^2:d=3}. Due to the nonlinearity of $\f$, we will treat the case $m=3$ next.

Recall from \eqref{form:Psi_m} that for $(u,\eta)\in \Hthreemu$,  $\Psi_3(u,\eta)=\frac{1}{2}\|A^{3/2}u\|^2_H+\frac{1}{2}(1-\kappa)\|\eta\|^2_{M^3}$. {A calculation then yields}
\begin{align}\label{eq:L.Psi_3}
\L^\mu \Psi_3(u,\eta)&= -\kappa\|A^{2}u\|^2_H+(1-\kappa)\la \Tcal_\mu\eta,\eta\ra_{M^3}+\la A(\f(u)), A^{2}u\ra_H+\frac{1}{2}\Tr(QA^3Q^*).
\end{align}
In the above, $\Tr(QA^3Q^*)<\infty$ thanks to \nameref{cond:Q:higher_regularity}. Also, recalling \eqref{ineq:<T.eta,eta>},
\begin{align*}
\la \Tcal_\mu\eta,\eta\ra_{\Mthreemu}\le -\frac{\delta}{2}\|\eta\|^2_{\Mthreemu}.
\end{align*}
It remains to estimate the term involving $\f(u)$. To see this, by H\"older's  and Agmon's inequalities, we have
\begin{align*}
&\la A(\f(u)), A^{2}u\ra_H\\
&=\la \f'(u)Au,A^2u\ra_H-\la \f''(u)|\grad u|^2, A^{2}u\ra_H\\
&\le c\|u\|^{p_1}_{L^\infty}\|Au\|_H\|A^2u\|_H+c\|u\|^{p_2}_{L^\infty}\|\grad u\|_{L^\infty}\|A^{1/2}u\|_H\|A^2u\|_H\\
&\le c\|A^{1/2}u\|^{\frac{p_1}{2}}_H\|Au\|^{\frac{p_1}{2}}_H\|Au\|_H\|A^2u\|_H\\
&\quad +c\|A^{1/2}u\|^{\frac{p_2}{2}}_H\|Au\|^{\frac{p_2}{2}}_H\|Au\|^{\frac{1}{2}}_H\|A^{3/2}u\|^{\frac{1}{2}}_H\|A^{1/2}u\|_H\|A^2u\|_H\\
&\le c\|A^{1/2}u\|^{\frac{p_1}{2}}_H\|Au\|^{1+\frac{p_1}{2}}_H\|A^2u\|_H+c\|A^{1/2}u\|^{1+\frac{p_2}{2}}_H\|Au\|^{\frac{p_2+1}{2}}_H\|A^2u\|_H^{\frac{3}{2}}.
\end{align*}
Using Young inequality on the above right hand side, we infer the bound
\begin{align} \label{ineq:<A.phi,A^2.u>}
\la A(\f(u)), A^{2}u\ra_H\le c\big(\|A^{1/2}u\|^{n_1}_H+\|Au\|^{n_2}_H+1\big)+\frac{1}{2}\kappa\|A^2u\|_H^2.
\end{align}
As a consequence, we obtain the estimate
\begin{align*}
&-\kappa\|A^{2}u\|^2_H+(1-\kappa)\la \Tcal_\mu\eta,\eta\ra_{\Mthreemu}+\la A(\f(u)), A^{2}u\ra_H+\frac{1}{2}\Tr(QA^3Q^*)\\
&\le -\frac{1}{2}\kappa\|A^{2}u\|^2_H -\frac{1}{2}(1-\kappa)\delta\|\eta\|^2_{\Mthreemu}+c\big(\|A^{1/2}u\|^{n_1}_H+\|Au\|^{n_2}_H+1\big)\\
&\le -\frac{1}{2}\kappa\|A^{2}u\|^2_H -\frac{1}{2}(1-\kappa)\delta\|\eta\|^2_{\Mthreemu}+c\big(\Psi_1(u,\eta)^{n_1}+\Psi_2(u,\eta)^{n_2}+1\big)\\
&\le -\frac{1}{2}\kappa\|A^{2}u\|^2_H -\frac{1}{2}(1-\kappa)\delta\|\eta\|^2_{\Mthreemu}+c(\Psi_2(u,\eta)^q+1),
\end{align*}
which combines with \eqref{eq:L.Psi_3} yields
\begin{align*}
\d \Psi_3(U(t)) &=\L^\mu\Psi_3(U(t))\d t+\la A^{3/2}u(t),A^{3/2}Q\d w(t)\ra_H\\
&\le - c \Psi_3(U(t))\d t+C\Psi_2(U(t))^q\d t+C\d t++\la A^{3/2}u(t),A^{3/2}Q\d w(t)\ra_H.
\end{align*}
Also,
\begin{align*}
\E\Psi_3(U(t))\le e^{-ct}\Psi_3(\x)+C\int_0^te^{-c(t-r)}\E\Psi_2(U(r))^{q}\d r+C.
\end{align*}
Recalling the estimates in Lemma \ref{lem:moment-boud:H^2xM^2:d=3}, then applying Poincar\'e's inequality yields
\begin{align*}
\E\Psi_2(U(r))^{q}&\le Ce^{-cr}\Psi_2(\x)^{q'}+C\le Ce^{-cr}\Psi_3(\x)^{q'}+C.
\end{align*}
Similarly to \eqref{ineq:e^-ct.int.e^(cr)} making use of~Lemma \ref{lem:tech:1}, we therefore arrive at the bound
\begin{align*}
\E\Psi_3(U(t))\le C_{3,1}e^{-c_{3,1}t}\Psi_3(\x)^{q_{3,1}}+C_{3,1},
\end{align*}
where $c_{3,1}$, $C_{3,1}$ and $q_{3,1}$ do not depend on $\x,\,t$ and $\mu\in\Mdelta$.

In order to establish higher moment {bounds} for $\Psi_3$, we first {observe that we} have the following identity for $n\ge 2$
\begin{align*}
\L^\mu\Psi_3(u,\eta)^n&=n\Psi_3(u,\eta)^{n-1}\Big( -\kappa\|A^{2}u\|^2_{H}+(1-\kappa)\la \Tcal_\mu\eta,\eta\ra_{\Mthreemu}+\la A(\f(u)),A^{2}u\ra_{H}\\
&\quad\ +\frac{1}{2}\Tr(QA^3Q^*)\Big)\\
&\quad\ +\frac{1}{2} n(n-1)\Psi_3(u,\eta)^{n-2}\sum_{k\ge 1}|\la u,Qe_k\ra_{H^3}|^2.
\end{align*}
By employing a similar argument as in the proof of Lemma \ref{lem:moment-boud:H^2xM^2:d=3} and recalling \eqref{ineq:<A.phi,A^2.u>}, we see that the above right-hand side is dominated by
\begin{align*}
-c\Psi_3(u,\eta)^n+C\big(\Psi_2(u,\eta)^{\hat{q}}+1\big),
\end{align*}
where $\hat{q}$ is a constant integer that only depends on $n$ and $\f$. So that, for all $\x\in\Hthreemu$ and $t\ge 0$,
\begin{align}
\d \Psi_3(U(t))^n& \le -c \Psi_3(U(t))^n\d t+C(\Psi_2(U(t))^{ \hat{q} }+1)\d t\notag\\
&\quad\ +n\Psi_3(U(t))^{n-1}\la A^{3/2}u(t),A^{3/2}Q\d w(t)\ra_H.\label{ineq:d.Psi_3^n}
\end{align}
Furthermore, employing Lemma \ref{lem:moment-boud:H^2xM^2:d=3} and Lemma \ref{lem:tech:1}, the following holds
\begin{align*}
\E\Psi_3(U(t))^n&\le e^{-ct}\Psi_3(\x)^n+C\int_0^te^{-c(t-r)}\E\Psi_2(U(r))^{\hat{q}}\d r+C\\
&\le C_{3,1}e^{-c_{3,n}t}\Psi_3(\x)^{q_{3,n}}+C_{3,n}.
\end{align*}
This finishes the proofs of \eqref{ineq:d.Psi_k^n}--\eqref{ineq:moment-bound:H^k} for {$m=3$}.

{For the induction hypothesis, we} now suppose that \eqref{ineq:moment-bound:H^k} holds for $k=1,\dots,m-1\ge 3$. {Note that} $m\ge 4$. Letting $\Psi_m(u,\eta)$ be as in \eqref{form:Psi_m}, we formally have the identity
\begin{align}\label{eq:L.Psi_m}
\L^\mu \Psi_m(u,\eta)&= -\kappa\|A^{\frac{m+1}{2}}u\|^2_H+(1-\kappa)\la \Tcal_\mu\eta,\eta\ra_{\Mmmu}+\la A^{\frac{m-1}{2}}(\f(u)), A^{\frac{m+1}{2}}u\ra_H \notag \\
&\quad\ +\frac{1}{2}\Tr(QA^mQ^*).
\end{align}
We note that by the previous arguments, it is established that $u\in H^{m-1}\cap L^\infty(\domain)$ since $H^2\hookrightarrow L^\infty(\domain)$ in dimension $d\le 3$. Also, recalling \nameref{cond:Q:higher_regularity} and \eqref{ineq:<T.eta,eta>}, {we see that}
\begin{align*}
\Tr(QA^mQ^*)<\infty,\quad\text{and}\quad\la \Tcal_\mu\eta,\eta\ra_{\Mmmu}\le -\frac{\delta}{2}\|\eta\|^2_{\Mmmu}.
\end{align*}
Due to the difficulty from the nonlinearity, to estimate the right-hand side of \eqref{eq:L.Psi_m}, we will consider two cases depending on whether $m$ is even or not. {For what follows, we recall from \nameref{cond:phi:d=3} that $\f\in H^{m-1}$ (see \cite{conti2006singular}).}

\vspace*{4pt}\noindent
\textbf{Case 1.} $m$ is even. In this case, it holds that
\begin{align*}
\la A^{\frac{m-1}{2}}(\f(u)), A^{\frac{m+1}{2}}u\ra_H&=\la\grad  A^{\frac{m-2}{2}}(\f(u)), \grad A^{\frac{m}{2}}u\ra_H.
\end{align*}
In view of identity \eqref{form:A^(n)phi(u)}, we apply $\grad$ to $A^{\frac{m-2}{2}}(\f(u))$ to obtain
\begin{align*}
\grad  A^{\frac{m-2}{2}}(\f(u)) &= \f'(u)\grad A^{\frac{m-2}{2}}u+c\f''(u)\grad u\cdot \grad^2 A^{\frac{m-4}{2}}u+ \sum_{i\ge 2}^{m-1}\f^{(i)}(u)I_{i}(u),
\end{align*}
where for $i=2,\dots, m-1$, $I_i(u)$ satisfies
\begin{align*}
\|I_{i}(u)\|_{L^\infty}\le c\Big(\sum_{j=1}^{m-3}\|A^{\frac{j}{2}}u\|^{n_j}_{L^\infty}+1\Big).
\end{align*}
We then use Agmon's, H\"older's, {and Young's} inequalities to estimate (recalling \nameref{cond:phi:d=3})
\begin{align*}
&\la\f'(u)\grad A^{\frac{m-2}{2}}u+c\f''(u)\grad u\cdot \grad^2 A^{\frac{m-4}{2}}u, \grad A^{\frac{m}{2}}u\ra_H \\
 &\le c\Big[(\|u\|^{p_1}_{L^\infty}+1)\|A^{\frac{m-1}{2}}u\|_H+(\|u\|^{p_2}_{L^\infty}+1)\|A^{1/2}u\|_{L^\infty}\|A^{\frac{m-2}{2}}u\|_H    \Big]\|A^{\frac{m+1}{2}}u\|_H \\
 &\le
 {c}\Big[ 1+\|u\|^{4p_1}_{L^\infty}+\|u\|^{6p_2}_{L^\infty}+\|A^{1/2}u\|_{L^\infty}^{6}+  \|A^{\frac{m-2}{2}}u\|_H^{6}+ \|A^{\frac{m-1}{2}}u\|_H^{4}\Big]\notag
 \\
 &\quad\ +
 {\frac{1}{100}}\|A^{\frac{m+1}{2}}u\|^2_H\\
 &\le {c}
 \Big[ 1+\|A^{1/2}u\|^{2p_1}_H\|Au\|^{2p_1}_H+\|A^{1/2}u\|^{3p_2}_H\|Au\|^{3p_2}_H+\|Au\|_{H}^{3}\|A^{3/2}u\|_{H}^{3}\\
 &\quad\ +  \|A^{\frac{m-2}{2}}u\|_H^{6}+ \|A^{\frac{m-1}{2}}u\|_H^{4}\Big]\\
 &\quad\ +\frac{1}{100}\kappa\|A^{\frac{m+1}{2}}u\|^2_H.
\end{align*}
Likewise,
\begin{align*}
&\Big\la \sum_{i\ge 2}^{m-1}\f^{(i)}(u)I_{i}(u),  \grad A^{\frac{m}{2}}u\Big\ra_H\\
&\le c (\|u\|^{p_2}_{L^\infty}+1)\Big(\sum_{j=1}^{m-3}\|A^{j}u\|^{n_j}_{L^\infty}+1\Big)\|A^{\frac{m+1}{2}}u\|_H\\
&\le
{c}\Big(\|u\|^{4p_2}_{L^\infty}+\sum_{j=1}^{m-3}\|A^{\frac{j}{2}}u\|^{4n_j}_{L^\infty}+1\Big)+
{\frac{1}{100}}\|A^{\frac{m+1}{2}}u\|^2_H\\
&\le
{c}\Big(\|A^{1/2}u\|^{2p_2}_{H}\|Au\|^{2p_2}_{H}+\sum_{j=1}^{m-3}\|A^{\frac{j+1}{2}}u\|^{2n_j}_{H}\|A^{\frac{j+2}{2}}u\|^{2n_j}_{H}+1\Big)+\frac{1}{100}\kappa\|A^{\frac{m+1}{2}}u\|^2_H.
\end{align*}
We now combine the above estimates to infer the existence of a constant $n_*$ sufficiently large such that
\begin{align} \label{ineq:<A^(m-1)/2.phi(u),A^(m+1)/2.u>}
\la A^{\frac{m-1}{2}}(\f(u)), A^{\frac{m+1}{2}}u\ra_H
&\le {c}
\Big(1+\sum_{j=0}^{m-1}\|A^{\frac{j}{2}}u\|^{n_*}_H\Big)+\frac{1}{2}\kappa\|A^{\frac{m+1}{2}}u\|^2_H.
\end{align}

\vspace*{4pt}\noindent\textbf{Case 2.} $m$ is odd. In this situation, since $\frac{m-1}{2}$ is an integer, we may invoke \eqref{form:A^(n)phi(u)} directly to see that
\begin{align*}
A^{\frac{m-1}{2}}(\f(u))&= \f'(u)A^{\frac{m-1}{2}}u+c\f''(u)\grad u\cdot  \grad A^{\frac{m-3}{2}}u+ \sum_{i\ge 2}^{m-1}\f^{(i)}(u)\widehat{I}_{i}(u),
\end{align*}
where for $i=2,\dots, m-1$, $\widehat{I}_i(u)$ satisfies (for possibly different $n_j$'s from those of case 1)
\begin{align*}
\|\widehat{I}_{i}(u)\|_{L^\infty}\le c\Big(\sum_{j=1}^{m-3}\|A^{\frac{j}{2}}u\|^{n_j}_{L^\infty}+1\Big).
\end{align*}
Employing the same argument as in {Case} 1 above, we also establish the estimate \eqref{ineq:<A^(m-1)/2.phi(u),A^(m+1)/2.u>} for a suitably large $n^*$.

Combining the above estimates with \eqref{eq:L.Psi_m}, we observe that (using Sobolev embedding)
\begin{align}
\L^\mu \Psi_m(\x)&\le -\frac{1}{2}\kappa\|A^{\frac{m+1}{2}}u\|^2_H-\frac{1}{2}(1-\kappa)\delta\|\eta\|^2_{\Mmmu}+C\sum_{j=0}^{m-1}\|A^{\frac{j}{2}}u\|^{n_*}_H+C\nonumber\\
&\le -\frac{1}{2}\kappa\|A^{\frac{m+1}{2}}u\|^2_H-\frac{1}{2}(1-\kappa)\delta\|\eta\|^2_{\Mmmu}+C\|A^{\frac{m-1}{2}}u\|^{n_*}_H+C\nonumber\\
&\le -\frac{1}{2}\kappa\|A^{\frac{m+1}{2}}u\|^2_H-\frac{1}{2}(1-\kappa)\delta\|\eta\|^2_{\Mmmu}+C\Psi_{m-1}(u,\eta)^{n_*}+C. \label{ineq:L.Psi_m}
\end{align}
By It\^o's formula, this implies
\begin{align}
\d \Psi_m(U(t))&=\L^\mu\Psi_m(U(t))\d t+\la A^{\frac{m}{2}}u(t),A^{\frac{m}{2}}Q\d w(t)\ra_H \notag  \\
&\le -c\Psi_m(U(t))\d t+ C\Psi_{m-1}(U(t))^{n_*}\d t+\la A^{\frac{m}{2}}u(t),A^{\frac{m}{2}}Q\d w(t)\ra_H. \label{ineq:d.Psi_m}
\end{align}
Also, using Gronwall's inequality, we arrive at the bound
\begin{align*}
\E\Psi_m(U(t))&\le e^{-ct}\Psi_m(\x)+C\int_0^t e^{-c(t-r)}\E\Psi_{m-1}(U(r))^{n_*}\d r+C.
\end{align*}
In view of the induction hypothesis, namely, \eqref{ineq:moment-bound:H^k} holds for up to $m-1$, we readily have
\begin{align} \label{ineq:sum.Psi_j}
\E\Psi_{m-1}(U(r))^{n_*}& \le Ce^{-c_{m-1,n_*}r}\Psi_{m-1}(\x)^{q_{m-1,n_*}}+C \notag\\
&\le Ce^{-c_{m-1,n_*}r}\Psi_{m}(\x)^{q_{m-1,n_*}}+C,
\end{align}
where $C$ does not depend on time $r$, $\x$ and $\mu\in\Mdelta$. As a consequence
\begin{align*}
\E\Psi_m(U(t))&\le e^{-ct}\Psi(\x)+C\int_0^t e^{-c(t-r)}e^{-c_{m-1,n_*}r}\d r \Psi_m(\x)^{q_{m-1,n_*}}+C.
\end{align*}
In the same spirit of \eqref{ineq:e^-ct.int.e^(cr)}, we therefore may infer the existence of positive constants $c_{m,1},C_{m,1}$ and $q_{m,1}$ independent of $\x$, $\mu\in\Mdelta$ and $t$ such that
\begin{align*}
\E\Psi_m(U(t))\le C_{m,1}e^{-c_{m,1}t}\Psi_m(\x)^{q_{m,1}}+C_{m,1},
\end{align*}
thereby proving~\eqref{ineq:d.Psi_k^n}--\eqref{ineq:moment-bound:H^k} for $k=m$ and $n=1$. For general $n\ge 2$, a similar argument as in the case $k=3$, also produces the desired bounds \eqref{ineq:d.Psi_k^n}--\eqref{ineq:moment-bound:H^k}.

Regarding \eqref{ineq:moment-bound:H^m:d=3}, we integrate \eqref{ineq:L.Psi_m} with respect to time and take expectation to arrive at the following bounds for all $\x\in \H^m$ and $t\ge 0$,
\begin{align*}
&\E\Psi_m(U(t))+\int_0^t\frac{1}{2}\kappa \E\|A^{\frac{m+1}{2}}u(r)\|^2_H+\frac{1}{2}(1-\kappa)\delta\E\|\eta(r)\|^2_{\Mmmu}\d r\\
&\le \Psi_m(\x)+C\int_0^t\E\Psi_{m-1}(U(r))^{n_*}\d r + Ct.
\end{align*}
Together with estimate \eqref{ineq:sum.Psi_j}, the above inequality implies
\begin{align*}
&\E\Psi_m(U(t))+\int_0^t\frac{1}{2}\kappa \E\|A^{\frac{m+1}{2}}u(r)\|^2_H+\frac{1}{2}(1-\kappa)\delta\E\|\eta(r)\|^2_{\Mmmu}\d r\\
&\le \Psi_m(\x)+C\Big(\Psi_m(\x)^{q_{m-1,n_*}}+t\Big).
\end{align*}
This proves \eqref{ineq:moment-bound:H^m:d=3} for $n=1$. For general $n\ge 2$, we may employ the same strategy of proving \eqref{ineq:moment-bound:H^k} for $k=m$ to establish \eqref{ineq:moment-bound:H^m:d=3}.

Turning to~\eqref{ineq:exponential-bound:H^m:d=3}, for positive constants $b_i$, $n_i$, $i=0,\dots,m-1$, to be chosen later, let
\begin{align} \label{form:G}
G(u,\eta)=\Psi_m(u,\eta)+\sum_{i=0}^{m-1}b_i \Psi_i(u,\eta)^{n_i}.
\end{align}
By It\^o's formula, we have the following identity
\begin{align*}
\d G(U(t))= \L^\mu\Psi_m(U(t))\d t+ \d M_{m,1}(t)+\sum_{i=0}^{m-1}b_i\L^\mu\Psi_i(U(t))^{n_i}+b_i\d M_{i,n_i}(t)
\end{align*}
where the semi-martingales $M_{i,n_i}(t)$ are given by the formula~\eqref{form:M_(k,n)}. In light of~\eqref{ineq:d.Psi_k^n}, we obtain the estimate
\begin{align*}
&\d G(U(t))\\
&\le -c_{m,1}\Psi_{m}(U(t))\d t+C_{m,1}(\Psi_{m-1}(U(t))^{q_{m,1}}+1)\d t +\d M_{m,1}(t)\\
&\quad\ +\sum_{i=1}^{m-1}-b_i c_{i,n_i}\Psi_i(U(t))^{n_i}+b_iC_{i,n_i}(\Psi_{i-1}(U(t))^{q_{i,n_i}}  +1)\d t+b_i\d M_{i,n_i}(t)\\
&\quad\ -b_0 c_{0,n_0}\Psi_0(U(t))^{n_0}\d t+b_0C_{0,n_0}\d t +b_0\d M_{0,n_0}(t).
\end{align*}
In order to subsume the positive $\d t-$terms involving $U(t)$ on the above right--hand side, we simply choose
\begin{align*}
b_{m-1}=2\frac{C_{m,1}}{c_{m-1,n_{m-1}}},\quad n_{m-1}=q_{m,1},
\end{align*}
and
\begin{align*}
b_{i}=2\frac{b_{i+1}C_{i+1,n_{i+1}}}{c_{i,n_{i}}},\quad n_{i}=q_{i+1,n_{i+1}},\quad i=0,\dots,m-2.
\end{align*}
As a consequence, we may infer the existence of positive constants $c,C$ independent of $\x,\,t$ and $\mu\in\Mdelta$ such that
\begin{align} \label{ineq:d.G}
\d G(U(t))&\le -c\,G(U(t))\d t+C+\d M_G(t),
\end{align}
where
\begin{align*}
M_G(t)= M_{m,1}(t)+\sum_{i=0}^{m-1}b_i M_{i,n_i}(t).
\end{align*}
Now, for $\gamma\in(0,1)$ to be chosen later, we apply It\^o's formula to the function $(1+G)^\gamma$ and obtain
\begin{align}\label{eqn:d.(1+G)^gamma:Ito}
\d (1+G(U(t)))^{\gamma}
&= \gamma(1+G(U(t)))^{\gamma-1}\d G(U(t)) \notag \\
&\quad\ +\frac{1}{2}\gamma(\gamma-1)(1+G(U(t)))^{\gamma-2}\d\la M_G\ra(t),
\end{align}
where $\la M_G\ra(t)$ is the quadratic variation process of $M_G$ and given by
\begin{align} \label{form:<M_G>(t)}
\la M_G\ra(t) = \int_0^t \sum_{k\ge 1}\Big|\la u(r),Qe_k\ra_{H^m}+\sum_{i=0}^{m-1}b_in_i\Psi_i(U(r))^{n_i-1}\la u(r),Qe_k\ra_{H^i}\Big|^2\d r.
\end{align}
From~\eqref{ineq:d.G}, we readily have
\begin{align*}
&\gamma(1+G(U(t)))^{\gamma-1}\d G(U(t)) \\
&\le -c \gamma(1+G(U(t)))^{\gamma-1}G(U(t))\d t+C\gamma(1+G(U(t)))^{\gamma-1}\d t\\
&\quad\ +\gamma(1+G(U(t)))^{\gamma-1}\d M_G(t)\\
&\le -c \gamma(1+G(U(t)))^{\gamma}\d t+C\gamma(1+G(U(t)))^{\gamma-1}\d t\\
&\quad\ +\gamma(1+G(U(t)))^{\gamma-1}\d M_G(t).
\end{align*}
Since $\gamma\in(0,1)$ and $G\ge0$, it is clear that
\begin{align*}
C\gamma(1+G(U(t)))^{\gamma-1}\le C\gamma.
\end{align*}
Also, the last term on the right--hand side of~\eqref{eqn:d.(1+G)^gamma:Ito} is negative, thanks to $\gamma\in(0,1)$ again. We thus deduce the estimate
\begin{align}\label{ineq:d.(1+G)^gamma}
&\d (1+G(U(t)))^{\gamma} \notag \\
&\le  -c\gamma(1+G(U(t)))^{\gamma}\d t+C\gamma\d t+\gamma(1+G(U(t)))^{\gamma-1}\d M_G(t),
\end{align}
for some positive constants $c,\,C$ independent of $\gamma$. Now, to establish~\eqref{ineq:exponential-bound:H^m:d=3}, we aim to employ the well-known exponential Martingale inequality applied to~\eqref{ineq:d.(1+G)^gamma}. The argument below is similarly to those found in \cite[Page 23--25]{glatt2021long} and \cite[Lemma 5.1]{hairer2008spectral}.

Fixing $T>0$, applying I\^o's formula to $e^{\frac{1}{2}c\gamma(t-T)}(1+G(U(t)))^{\gamma}$ making use of~\eqref{ineq:d.(1+G)^gamma} gives
\begin{align}
&\d\Big[e^{\frac{1}{2} c\gamma(t-T)}(1+G(U(t)))^{\gamma}\Big] \notag\\
&=\frac{1}{2}c\gamma e^{\frac{1}{2} c\gamma(t-T)}(1+G(U(t)))^{\gamma}\d t +e^{\frac{1}{2} c\gamma(t-T)}\d (1+G(U(t)))^{\gamma}\notag\\
&\le -\frac{1}{2}c\gamma e^{\frac{1}{2} c\gamma(t-T)}(1+G(U(t)))^{\gamma}+e^{\frac{1}{2} c\gamma(t-T)}C\gamma\d t  \notag \\
&\quad\  +\gamma e^{\frac{1}{2} c\gamma(t-T)}(1+G(U(t)))^{\gamma-1}\d M_G(t). \label{ineq:d.e^t.(1+G)^gamma:a}
\end{align}
We note that~\eqref{form:<M_G>(t)} together with~\nameref{cond:Q:higher_regularity} implies
\begin{align*}
&\d \la M_G\ra(t)\\
&\le m\sum_{k\ge 1}|\la u(t),Qe_k\ra_{H^m}|^2\d t+m\sum_{i=0}^{m-1}b_i^2n_i^2\Psi_i(U(r))^{2n_i-2}\sum_{k\ge 1}|\la u(t),Qe_k\ra_{H^i}|^2\d t\\
&\le m\Tr(QA^mQ^*)\|u(t)\|^2_{H^m}\d t+m\sum_{i=0}^{m-1}b_i^2n_i^2\Psi_i(U(r))^{2n_i-2}\Tr(QA^iQ^*)\| u(t)\|^2_{H^i}\d t\\
&\le C\Big(\Psi_{m}(U(t)) +\sum_{i=0}^{m-1}\Psi_i(U(t))^{2n_i-1} \Big)\d t.
\end{align*}
So, recalling $G$ as in~\eqref{form:G}, for all $\gamma\in (0,1)$ sufficiently small, we have the bound
\begin{align*}
\frac{1}{\sqrt{\gamma}}(1+G(U(t)))^{2-\gamma}\d t&\ge \frac{C}{\sqrt{\gamma}}\Big(1+ \Psi_m(U(t))^{2-\gamma} +\sum_{i=0}^{m-1}\Psi_i(U(t))^{2n_i-\gamma n_i} \Big)\\
&\ge C\Big(\Psi_{m}(U(t)) +\sum_{i=0}^{m-1}\Psi_i(U(t))^{2n_i-1} \Big)\d t\\&\ge \d \la M_G\ra(t),
\end{align*}
whence for all $t\in [0,T]$
\begin{align}
&\frac{1}{2}c\gamma e^{\frac{1}{2} c\gamma(t-T)}(1+G(U(t)))^{\gamma}\d t \notag \\&=\frac{c}{2\sqrt{\gamma}}\gamma^2 e^{\frac{1}{2} c\gamma(t-T)}\cdot \frac{1}{\sqrt{\gamma}}(1+G(U(t)))^{\gamma}\d t\notag\\
&\ge \frac{c}{2\sqrt{\gamma}}\cdot \gamma^2 e^{ c\gamma(t-T)}(1+G(U(t)))^{2\gamma-2}\d \la M_G\ra(t).\label{ineq:d.e^t.(1+G)^gamma:b}
\end{align}
Combining~\eqref{ineq:d.e^t.(1+G)^gamma:a}--\eqref{ineq:d.e^t.(1+G)^gamma:b}, we obtain for $t\in [0,T]$ \newcommand{\Mtilde}{\tilde{M}}
\begin{equation} \label{ineq:d.e^t.(1+G)^gamma:c}
\begin{aligned}
\d \Big[e^{\frac{1}{2} c\gamma(t-T)}(1+G(U(t)))^{\gamma}\Big]
&\le C\gamma e^{\frac{1}{2}c\gamma(t-T)}\d t+\d \Mtilde_G(t)-\frac{c}{2\sqrt{\gamma}}\d \la \Mtilde_G\ra(t),
\end{aligned}
\end{equation}
where $\Mtilde_G$ is the semi-martingale defined as
\begin{align*}
\Mtilde_G=\gamma e^{\frac{1}{2} c\gamma(t-T)}(1+G(U(t)))^{\gamma-1}\d M_G(t),
\end{align*}
whose variation process $\la \Mtilde_G\ra(t)$ is given by
\begin{align*}
 \d\la \Mtilde_G\ra(t)=\gamma^2 e^{ c\gamma(t-T)}(1+G(U(t)))^{2\gamma-2}\d \la M_G\ra(t).
\end{align*}
By the exponential Martingale inequality applying to $\Mtilde(t)$,
\begin{align}\label{ineq:exponential-Martingale}
\P\Big(\sup_{t\ge 0}\Big[\Mtilde_G(t)-\frac{c}{2\sqrt{\gamma}}\la\Mtilde_G\ra(t)\Big] >r\Big)\le e^{-\frac{c}{\sqrt{\gamma}}r}, \quad r\ge 0,
\end{align}
we integrate~\eqref{ineq:d.e^t.(1+G)^gamma:c} on $t\in[0,T]$ to see that
\begin{align}\label{ineq:d.e^t.(1+G)^gamma:d}
&\P\Big( (1+G(U(T)))^{\gamma}-e^{-\frac{1}{2} c\gamma T}(1+G(\x))^{\gamma}-C\gamma\int_0^T e^{\frac{1}{2} c\gamma(\ell-T)}\d \ell \ge r    \Big) \notag \\
&\le e^{-\frac{c}{\sqrt{\gamma}}r}.
\end{align}
In particular, by choosing $\gamma$ small enough such that $c/\sqrt{\gamma}>1$, ~\eqref{ineq:d.e^t.(1+G)^gamma:d} implies
\begin{align*}
\E\exp\Big\{  (1+G(U(T)))^{\gamma}  \Big\} \le C\exp\Big\{ e^{-\frac{1}{2} c\gamma T}(1+G(\x))^{\gamma}\Big\}.
\end{align*}
In the above, we emphasize that $c,\,C$ may depend on $\gamma$ but do not depend on $\x$, $t$ and $\mu\in\Mdelta$. Also,
\begin{align*}
c\Psi_m(\x)\le G(\x)\le C(1+\Psi_m(\x)^{n}),
\end{align*}
for some positive integer $n$. As a consequence, the following holds for all $T\ge 0$,
{\begin{align*}
\E\exp\Big\{ c \Psi_m(U(T))^{\gamma}  \Big\} &\le C\exp\Big\{ e^{-\frac{1}{2} c\gamma T}(1+\Psi_m(\x)^{ n})^\gamma\Big\}\\
&\le C\exp\Big\{C e^{-\frac{1}{2} c\gamma T}(1+\Psi_m(\x)^{ n\gamma})\Big\}.
\end{align*}
We therefore establish~\eqref{ineq:exponential-bound:H^m:d=3} with $\gamma_{m,0}:=\gamma\in(0,1)$ and $q_{m,0}:=n\gamma$}. The proof is thus complete.
\end{proof}

\section{Existence of invariant probability measures}

\label{sec:existence}

In this section, we proceed to show that the Markov semigroup $P_t^{\mu}$ as in \eqref{form:P_t^epsilon} possesses an invariant probability $\numu$. To do so, we first observe that Theorem \ref{thm:well-posed} implies that $P_t^{\mu}$ is Feller, that is, $P_t^{\mu}f\in {C(\Hzeromu)}$ for every $f\in {C(\Hzeromu)}$. We then follow the classical Krylov-Bogoliubov argument \cite{da2014stochastic} to show that the following family of measures $\{\numu_T\}_{T>0}$ given by
\begin{equation} \label{form:nu.time-average}
\numu_T(\cdot)=\frac{1}{T}\int_0^T\close P_t^{\mu}(0,\cdot)\d t,
\end{equation}
is tight in $\Pcal r(\Hzeromu)$. Recall that $P_t^{\mu}(0,\cdot)$ denotes the Markov semigroup associated with the process $U^0(t)=(u(t;0),\eta(t,\cdotp;0))$ that satisfies \eqref{eqn:react-diff:mu} corresponding to the initial condition $\x=0\in\Hzeromu$. As mentioned earlier in Section \ref{sect:notation}, the embedding $\Honemu\subset\Hzeromu$ is only continuous whereas the embedding $\Z_\mu^1=H^1\times \Ecal_\mu^1\subset\Hzeromu$ is compact \cite{gatti2004exponential,joseph1989heat,joseph1990heat}. It is thus important to obtain a moment bound in norm of $\Ecal^1_\mu$ that grows at most linearly in time $t$. Here we recall from \eqref{form:space:L:norm} that
$$ \|\eta\|_{\Ecal_\mu^{1}}^2 = \|\eta\|_{\Monemu}^2+\|\Tcal_\mu\eta\|^2_{\Mzeromu}+\sup_{r\geq 1}r\T^\mu_\eta(r),$$
where $\Tcal_\mu$ and $\T^\mu$ are as in \eqref{form:Tcal} and \eqref{form:tailfunction}, respectively.

{In order to prove Theorem \ref{thm:existence}, we will establish two preliminary lemmas. In the first, we will obtain estimates for $u(t)$ in {the} $H^1$--norm. More precisely, we have the following result.}

\begin{lemma} \label{lem:int_0^t e^(delta/epsilon)|A^(1/2)u|ds}
Under the hypotheses of Theorem \ref{thm:well-posed}, let $U(t)=(u(t),\eta(t))$ denote the solution of \eqref{eqn:react-diff:mu} corresponding to initial condition $U_0\in\Hzeromu$. Then for all $0<q\le\delta$, where $\delta$ is the constant from \eqref{ineq:<T.eta,eta>}, there exists a positive constant $c=c(U_0,q)$ independent of $t$ and $\mu\in\Mdelta$ such that
\begin{equation}\label{ineq:int_0^t e^(delta/epsilon)|A^(1/2)u|ds}
\E\int_0^t e^{qr}\|A^{1/2}u(r)\|^2_H\emph{d} r\leq\|\x\|^2_{\Hzeromu}+ c\,e^{qt}.
\end{equation}

\end{lemma}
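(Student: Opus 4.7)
The plan is to apply It\^o's formula to $e^{qt}\Psi_0(U(t))$, where $\Psi_0$ is the Lyapunov functional from~\eqref{form:Psi_0}. Leveraging the computation of $\L^\mu\Psi_0$ already carried out in the proof of Lemma~\ref{lem:moment-bound:H^0_epsilon}, combined with the dissipativity assumption~\nameref{cond:phi:2} and the key estimate~\eqref{ineq:<T.eta,eta>}, one arrives at the pathwise inequality~\eqref{ineq:d.Psi_0}. Multiplying through by $e^{qt}$, integrating from $0$ to $t$, and taking expectations (the stochastic integral being promoted to a true martingale by the standard stopping-time localization $\tau_N=\inf\{t\ge 0:\|u(t)\|_H\ge N\}$, with~\eqref{ineq:Psi_0^n} supplying the integrability needed to pass $N\to\infty$) then yields an estimate of the form
\begin{align*}
\kappa\,\E\int_0^t e^{qr}\|A^{1/2}u(r)\|^2_H\d r+\tfrac{1}{2}(1-\kappa)\delta\,\E\int_0^t e^{qr}\|\eta(r)\|^2_{\Mzeromu}\d r\le\Psi_0(U_0)+q\,\E\int_0^t e^{qr}\Psi_0(U(r))\d r+\tfrac{C}{q}e^{qt}.
\end{align*}

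The hypothesis $q\le\delta$ enters here in an essential way. Upon expanding $q\Psi_0=\tfrac{q}{2}\|u\|^2_H+\tfrac{q(1-\kappa)}{2}\|\eta\|^2_{\Mzeromu}$, the two $\|\eta\|^2_{\Mzeromu}$-contributions combine to give a non-positive coefficient $\tfrac{(1-\kappa)(q-\delta)}{2}\le 0$, so the $\eta$-quadratic term can simply be discarded. This reduces the problem to controlling $\tfrac{q}{2}\E\int_0^t e^{qr}\|u(r)\|^2_H\d r$, which I would handle by invoking the moment bound $\E\|u(r)\|^2_H\le 2\E\Psi_0(U(r))\le 2e^{-c_{0,1}r}\Psi_0(U_0)+2C_{0,1}$ from~\eqref{ineq:Psi_0^n} (with $n=1$), and then applying Lemma~\ref{lem:tech:1} to the resulting integral $\int_0^t e^{(q-c_{0,1})r}\d r$ to bound it by $Ce^{qt}$ uniformly in $\mu\in\Mdelta$ (uniformity being automatic since $c_{0,1},C_{0,1}$ depend on $\mu$ only through $\delta$).

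Assembling everything, dividing by $\kappa$, and using $\Psi_0(U_0)\le\tfrac{1}{2}\|U_0\|^2_{\Hzeromu}$ would deliver the stated inequality, with $c=c(U_0,q)$ absorbing all quantities depending on $\Psi_0(U_0)$, $q$, $\kappa$, and the data. The only nontrivial step, and the main (mild) obstacle, is precisely the sign-cancellation in the $\|\eta\|^2_{\Mzeromu}$ terms, which is where $q\le\delta$ is genuinely used; the stopping-time justification for the martingale is routine, as is the Gronwall-type bookkeeping that assembles the final bound.
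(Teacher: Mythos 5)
Your proposal is correct and follows the same skeleton as the paper's proof: It\^o's formula applied to $e^{qt}\Psi_0(U(t))$, with the hypothesis $q\le\delta$ used exactly where you use it, namely to make the combined coefficient of $\|\eta\|^2_{\Mzeromu}$ nonpositive via \eqref{ineq:<T.eta,eta>} so that the memory term can be discarded.

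The one place where you genuinely diverge is the treatment of the leftover $\tfrac{q}{2}\|u(r)\|^2_H$ from $q\Psi_0$. The paper disposes of it pointwise: it groups $\tfrac{q}{2}\|u\|^2_H+\la\f(u),u\ra_H$, invokes \nameref{cond:phi:2} to get $\tfrac{q}{2}\|u\|^2_H-a_2\|u\|^{p_0+1}_{L^{p_0+1}}+a_3|\domain|$, and absorbs the quadratic term into the superquadratic dissipation by Young's inequality, leaving only a constant $c(q,\f,\domain)$. This keeps the argument entirely pathwise (no appeal to earlier moment bounds) and yields a constant independent of $U_0$. You instead bound $\la\f(u),u\ra_H\le a_3|\domain|$ directly and control $q\E\int_0^te^{qr}\Psi_0(U(r))\,\d r$ through the decaying moment estimate \eqref{ineq:Psi_0^n} (with $n=1$) and Lemma \ref{lem:tech:1}; this is also valid, is uniform in $\mu\in\Mdelta$ as you note, and costs only that your constant carries a factor of $\Psi_0(U_0)$ multiplying $e^{qt}$ — which the statement explicitly permits since $c=c(U_0,q)$. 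Either route closes the argument; the paper's is marginally more self-contained, yours reuses machinery already in place.
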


\begin{proof}
We apply Ito's formula to $e^{qt}\Psi_0(U(t))$ to see that
\begin{align*}
\d \big(e^{qt}\Psi_0(U(t))\big)&=e^{qt}\Big[q\Psi_0(U(t))-\kappa\|A^{1/2}u(t)\|^2_H+(1-\kappa)\la \Tcal_\mu\eta(t),\eta(t)\ra_{\Mzeromu}\\
&\quad\ +\la\f(u(t)),u(t)\ra_H+\frac{1}{2}\Tr(QQ^*)\Big]\d t +e^{qt}\la u(t),Q\d w(t)\ra_H.
\end{align*}
Thanks to \eqref{ineq:<T.eta,eta>} and the assumption $q\le\delta$, we have
$$\frac{1}{2}(1-\kappa)q\|\eta\|^2_{\Mzeromu}+ (1-\kappa)\la \Tcal_\mu\eta,\eta\ra_{\Mzeromu}\le 0.$$
Also, we invoke \nameref{cond:phi:2} and Young's inequality to see that
\begin{align*}
\frac{q}{2}\|u\|^2_H+\la\f(u),u\ra_H
&\le \frac{q}{2 }\|u\|^2_H-a_2\|u\|^{p_0+1}_{L^{p_0+1}(\domain)}+a_3|\domain|\\
&\le a_2\big(\frac{q}{2a_2 }\big)^{\frac{p_0+1}{p_0-1}}|\domain|+a_3|\domain|.
\end{align*}
We then infer the existence of a positive constant $c=c(q,\f,\domain,Q)$, independent of $t$ and $\mu\in\Mdelta$, such that
\begin{equation*}
e^{qt}\E\Psi_0(U(t))+\E\int_0^t e^{qr}\|A^{1/2}u(r)\|_H^2\d r\le \|\x\|^2_{\Hzeromu}+c\int_0^t e^{qr}\d r\le\|\x\|^2_{\Hzeromu}+ c\,e^{qt}.
\end{equation*}
This implies \eqref{ineq:int_0^t e^(delta/epsilon)|A^(1/2)u|ds}, as claimed.
\end{proof}

{In the second lemma, we assert that the solution of \eqref{eqn:react-diff:mu} indeed satisfies a stronger bound in $\Tcal_\mu$ and $\T^\mu$ uniform in time $t$. This will rely on Lemma \ref{lem:int_0^t e^(delta/epsilon)|A^(1/2)u|ds}. {For this, we will make use of the following elementary fact for locally integrable functions $f:\rbb\to[0,\infty)$ which are non-negative: for all $0<s\le t$ and $\delta>0$:}
\begin{align}\label{eq:tech:2}
e^{-\frac{\delta}{2}s} \int_0^s f(t-r)\d r&=\int_0^s e^{-\frac{\delta}{2}r} f(t-r)\d r-\frac{\delta}{2}\int_0^s e^{-\frac{\delta}{2}r} \int_0^r f(t-r')\d r'\d r \notag \\
&\le \int_0^t e^{-\frac{\delta}{2}r} f(t-{r})\d r.
\end{align}
}
\begin{lemma} \label{lem:T:bound}
Under the same hypothesis of~Theorem \ref{thm:well-posed}, let $U^0(t)=(u(t;0),\break\eta(t,\cdotp;0))$ be the solution of \eqref{eqn:react-diff:mu} with zero initial condition in $\Hzeromu$. Then there exists a constant $c$ independent of $t$ such that the following bounds hold for all $t\geq0$:
$$\E\| \Tcal_\mu\eta(t)\|^2_{\Mzeromu}\le c\qquad\text{and}\qquad \E\sup_{r\ge 1}r\T^\mu_{\eta(t)}(r)\le c.$$
\end{lemma}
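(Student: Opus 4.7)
My plan is to exploit the explicit representation~\eqref{form:eta(t)-representation}: since $\eta_0=0$,
\begin{align*}
\eta(t,s;0)=\int_0^{s\mi t}u(t-r;0)\,\d r,\qquad \partial_s\eta(t,s;0)=u(t-s;0)\mathbf{1}_{[0,t]}(s),
\end{align*}
and therefore $\Tcal_\mu\eta(t,s)=-u(t-s;0)\mathbf{1}_{[0,t]}(s)$. This immediately gives
\begin{align*}
\|\Tcal_\mu\eta(t)\|^2_{\Mzeromu}=\int_0^t\mu(s)\|A^{1/2}u(t-s;0)\|_H^2\,\d s\le\mu(0)\,e^{-\delta t}\int_0^te^{\delta r}\|A^{1/2}u(r;0)\|_H^2\,\d r,
\end{align*}
using $\mu(s)\le\mu(0)e^{-\delta s}$ from~\nameref{cond:mu} and the substitution $r=t-s$. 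Taking expectations and invoking Lemma~\ref{lem:int_0^t e^(delta/epsilon)|A^(1/2)u|ds} with $q=\delta$ and $U_0=0$ yields the first bound uniformly in $t$.

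For the second bound, I introduce $V(t):=\int_0^t e^{-\delta r/2}\|A^{1/2}u(t-r;0)\|_H^2\,\d r$. After a change of variables $\sigma=t-r$, Lemma~\ref{lem:int_0^t e^(delta/epsilon)|A^(1/2)u|ds} with $q=\delta/2$ gives $\E V(t)\le c$ uniformly in $t$. Cauchy--Schwarz applied to the explicit formula for $\eta$, followed by the auxiliary inequality~\eqref{eq:tech:2} (with $f=\|A^{1/2}u(\cdot;0)\|_H^2$), yields the pathwise pointwise bound
\begin{align*}
\|A^{1/2}\eta(t,s;0)\|_H^2\le(s\mi t)\int_0^{s\mi t}\|A^{1/2}u(t-r;0)\|_H^2\,\d r\le(s\mi t)\,e^{\delta(s\mi t)/2}\,V(t).
\end{align*}

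Splitting $\T^\mu_{\eta(t)}(r)=\int_0^{1/r}(\cdotp)+\int_r^\infty(\cdotp)$ and using $\mu(s)\le\mu(0)e^{-\delta s}$, the $\int_0^{1/r}$ contribution is bounded by $r\mu(0)V(t)\int_0^{1/r}s\,e^{-\delta s/2}\,\d s\le \mu(0)V(t)/(2r)$, which is uniformly bounded on $r\ge 1$. For the $\int_r^\infty$ contribution, note that $(s\mi t)\,e^{\delta(s\mi t)/2-\delta s}$ equals $s\,e^{-\delta s/2}$ when $s\le t$ and $t\,e^{\delta t/2-\delta s}$ when $s>t$; integrating and multiplying by $r$ produces bounds involving $r(1+r)e^{-\delta r/2}$ (bounded on $r\ge 1$) together with at worst $r t\,e^{-\delta t/2}\le t^2 e^{-\delta t/2}$ in the regime $r<t$ (bounded in $t$) and $r^2 e^{-\delta r/2}$ in the regime $r\ge t$ (bounded for $r\ge 1$, since there $r-t/2\ge r/2$). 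In every case one obtains $r\int_r^\infty\|A^{1/2}\eta(t,s)\|_H^2\mu(s)\,\d s\le c\,V(t)$. Combining the two pieces pathwise gives $\sup_{r\ge 1}r\T^\mu_{\eta(t)}(r)\le c\,V(t)$, whence $\E\sup_{r\ge 1}r\T^\mu_{\eta(t)}(r)\le c\,\E V(t)\le c$, as required.

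The main obstacle is keeping the supremum \emph{inside} the expectation, which forces one to produce a pathwise (in $\omega$) estimate on $\sup_{r\ge 1}r\T^\mu_{\eta(t)}(r)$ rather than merely a bound on $\sup_{r\ge 1}\E[r\T^\mu_{\eta(t)}(r)]$. The delicate bookkeeping is the interplay between the regimes $r<t$ and $r\ge t$: the polynomial factors $s,t,r$ must be dominated by the exponential decay coming both from $\mu$ and from~\eqref{eq:tech:2}, and the bound must survive a single common random factor $V(t)$ with uniformly bounded expectation. Once this setup is in place, the estimate reduces to elementary integration.
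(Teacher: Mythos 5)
Your proposal is correct and follows essentially the same route as the paper: the explicit representation $\eta(t,s)=\int_0^{s\wedge t}u(t-r)\,\d r$, Cauchy--Schwarz, the inequality~\eqref{eq:tech:2}, a pathwise bound of $\sup_{r\ge1}r\T^\mu_{\eta(t)}(r)$ by a single random factor with uniformly bounded expectation (your $V(t)$ is exactly the paper's $e^{-\delta t/2}\int_0^te^{\delta r/2}\|A^{1/2}u(r)\|_H^2\,\d r$), and Lemma~\ref{lem:int_0^t e^(delta/epsilon)|A^(1/2)u|ds} with $q=\delta$ and $q=\delta/2$. The only difference is cosmetic: the paper avoids your casework on $r<t$ versus $r\ge t$ by observing $e^{-\delta s/2}\le e^{-\delta(s\wedge t)/2}$, which factors $V(t)$ out of the tail integral in one step, leaving only $r\int s\,e^{-\delta s/2}\,\d s\le c(\delta)$ over the tail set.
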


\begin{proof}[Proof of Lemma \ref{lem:T:bound}]
Given the initial condition $\eta_0=0$, we recast \eqref{form:eta(t)-representation} as
\begin{align}
\eta(t,s)&=\chi_{(0,t]}(s)\int_0^s u(t-r)\d r+
\chi_{(t,\infty)}(s)\int_0^t u(t-r)\d r= \int_0^{s\wedge t}u(t-r)\d r.\label{form:eta(t)-representation:eta_0=0}
\end{align}
This implies that $\Tcal_\mu\eta(t)$ can be written explicitly as
$$\Tcal_\mu\eta(t,s)=-u(t-s)\chi_{(0,t]}(s).$$
It follows that
$$\E\| \Tcal_\mu\eta(t)\|^2_{\Mzeromu}=\E\int_0^t\mu(s)\|A^{1/2}u(t-s)\|^2_H\d s=\E\int_0^t\mu(t-s)\|A^{1/2}u(s)\|^2_H\d s,$$
where we have simply made a change of variable in the last equality. By \nameref{cond:mu}
$$\mu(s)\le \mu(0)e^{-\delta s}.$$
Thus
\begin{align*}
\E\| \Tcal_\mu\eta(t)\|^2_{\Mzeromu}\le\mu(0)e^{-\delta t}\E\int_0^te^{\delta s}\|A^{1/2}u(s)\|^2_H\d s.
\end{align*}
In light of Lemma \ref{lem:int_0^t e^(delta/epsilon)|A^(1/2)u|ds} (with $q=\delta$), the claimed uniform-in-time on $\E\| \Tcal_\mu\eta(t)\|^2_{\Mzeromu}$ follows.

With regards to $\sup_{r\ge 0}r\T^\mu_{\eta(t)}(r)$, we first note that \eqref{form:eta(t)-representation:eta_0=0} {and the Cauchy-Schwarz inequality together yield}
\begin{align*}
\|A^{1/2}\eta(t,s)\|^2_H&= \int_0^{s\mi t}\close\int_0^{s\mi t}\close \la A^{1/2}u(t-r_1),A^{1/2}u(t-r_2)\ra_H \d r_1\d r_2\\
&\le (s\mi t)\int_0^{s\mi t}\close\|A^{1/2}u(t-r')\|^2_H\d r'\\
&\le  s\int_0^{s\mi t}\close\|A^{1/2}u(t-r')\|^2_H\d r'.
\end{align*}
Now for $s\in[0,t]$, let
    \[
        I(s):=e^{-\frac{\delta}{2}s}\int_0^{s}\|A^{1/2}u(t-r')\|_H\d r'.
    \]
Recalling the expression of $\T^\mu_{\eta(t)}$ in \eqref{form:tailfunction} and the fact that $\mu(s)\le \mu(0)e^{-\delta s}$, we may estimate for all $r\ge 1$ as follows:
\begin{align*}
r\T^\mu_{\eta(t)}(r)&= r\int_{(0,\frac{1}{r})\cup(r,\infty)}\close\close\mu(s)\|A^{1/2}\eta(t,s)\|^2_H\d s\\
&\le r\mu(0)\int_{(0,\frac{1}{r})\cup(r,\infty)}\close \close e^{-\delta s}s\int_0^{s\mi t}\close\|A^{1/2}u(t-r;)\|^2_H\d r'\,\d s\\
&=r\mu(0)\int_{(0,\frac{1}{r})\cup(r,\infty)}\close\close s\,e^{-\frac{\delta}{2}s} e^{-\frac{\delta}{2}s} \int_0^{s\mi t}\close\|A^{1/2}u(t-r')\|^2_H\d r'\,\d s\\
&\le r\mu(0)\int_{(0,\frac{1}{r})\cup(r,\infty)}\close\close s\,e^{-\frac{\delta}{2}s}  e^{-\frac{\delta}{2}(s\mi t)} \int_0^{s\mi t}\close\|A^{1/2}u(t-r')\|^2_H\d r'\,\d s\\
&=r\mu(0)\int_{(0,\frac{1}{r})\cup(r,\infty)}\close\close s\,e^{-\frac{\delta}{2}s}{I(s\mi t)}\d s.
\end{align*}
By \eqref{eq:tech:2}, we have
$$I(s\mi t)\le \int_0^t e^{-\frac{\delta}{2}r'} \|A^{1/2}u(t-r')\|^2_H\d r'.$$
It follows that
$$ r\T_{\eta(t)}^\mu(r)\le r\mu(0)\int_{(0,\frac{1}{r})\cup(r,\infty)}\close\close s\,e^{-\frac{\delta}{2}s} \d s \int_0^t e^{-\frac{\delta}{2} r'}\|A^{1/2}u(t-r')\|^2_H\d r'.$$
For $r\ge 1$, we have
\begin{align*}
r\int_{(0,\frac{1}{r})\cup(r,\infty)}\close\close s\,e^{-\frac{\delta}{2}s} \d s<c(\delta)<\infty.
\end{align*}
Note that $c(\delta)$ is independent of $r$. Thus, taking supremum on $r\ge1$ yields
\begin{align*}
\sup_{r\ge 1} r\T_{\eta(t)}^\mu(r)&\le \sup_{r\ge 1} r \mu(0)\int_{(0,\frac{1}{r})\cup(r,\infty)}\close\close s\,e^{-\frac{\delta}{2}s}\d s\int_0^t e^{-\frac{\delta}{2}r'}\|A^{1/2}u(t-r')\|^2_H\d r'\\&\le c(\mu) \int_0^t e^{-\frac{\delta}{2}r'}\|A^{1/2}u(t-r')\|^2_H\d r'\\
&=c(\mu)e^{-\frac{\delta}{2}t} \int_0^t e^{\frac{\delta}{2}r'}\|A^{1/2}u(r')\|^2_H\d r'.
\end{align*}
Taking expectation on both sides and employing Lemma \ref{lem:int_0^t e^(delta/epsilon)|A^(1/2)u|ds} with $q=\delta/2$, we obtain the estimate
\begin{align*}
\E\sup_{r\ge 1}r\T^\mu _{\eta(t)}(r)&\le  c(
\mu),
\end{align*}
where $c(\mu)$ is independent of $t$, as desired.
\end{proof}

{We are now ready to} give the proof of Theorem \ref{thm:existence}, thus ensuring the existence of an invariant probability measure $\nu$ for \eqref{eqn:react-diff:mu}.
\begin{proof}[Proof of Theorem \ref{thm:existence}]
Recalling the space $\Z^1_\mu=H^1\times \Ecal^1_\mu$ as in \eqref{form:space:H_epsilon}, since $\Ecal^1_\mu$ is compactly embedded into $\Mzeromu$  \cite{gatti2004exponential,joseph1989heat,joseph1990heat}, it is clear that the following bounded set
$$\mathcal{B}_R=\{(u,\eta)\in \Z^1_\mu :\|(u,\eta)\|_{\Z^1_\mu }\le R\},$$
is precompact in $\Hzeromu=H\times \Mzeromu$. Using Markov's inequality, we estimate
\begin{align}\label{eq:time:avg}
\numu_t(\mathcal{B}_R^c)=\frac{1}{t}\int_0^t P^\mu_s(0;\mathcal{B}_R^c)\d s\le \frac{1}{t}\int_0^t \frac{\E\|A^{1/2}u(s)\|^2_H+\E\|\eta(s)\|^2_{\Ecal^1 }}{R^2}\d s.
\end{align}
To bound the term $\E\|A^{1/2}u(s)\|^2_H$, we note that $\|A^{1/2}u(s)\|^2_H\le 2\Psi_1(u(s),\eta(s))$ where $\Psi_1$ is given by~\eqref{form:Psi_1}. We then may invoke \eqref{ineq:Psi_1^n} with zero initial condition and $n=1$ to deduce
\begin{equation}\label{ineq:int_0^t|A^(1/2)u^epsilon|ds<t}
\int_0^t\E\|A^{1/2}u(r)\|^2_H\d r\le C(1+t).
\end{equation}
With regards to the second term, $\E\|\eta(s)\|^2_{\Ecal^1_\mu}$, appearing in \eqref{eq:time:avg}, thanks to Lemma \ref{lem:moment-boud:H^1_epsilon} and Lemma \ref{lem:T:bound}, we infer the existence of a positive constant $c$, independent of $t$, $R$, and such that
    $$
        \int_0^t \E\|\eta(s)\|^2_{\Ecal^1_\mu }\d s\le ct.
    $$
It follows that there exists $t_R>1$, sufficiently large, such that
$$\nu_t(\mathcal{B}_R)\le \frac{c}{R^2},$$
for all $t\geq t_R$. This, in turn, implies that $\{\numu_t\}_{t>0}$ is tight in $\Hzeromu$. An application of Krylov-Bogoliubov's Theorem then implies the existence of $\numu\in \Pcal r(\Hzeromu)$ which is invariant for \eqref{eqn:react-diff:mu}.

Regarding \eqref{ineq:exponential-bound:nu^epsilon}, given $N>0$ we consider $\phi_N(U)=\exp\left({\beta\|U\|^2_{\Hzeromu}}\right)\mi N$. Since $\phi_N$ is bounded, we see by invariance of $\nu^\mu$, that
\begin{align*}
\int_{\Hzeromu}\phi_N(\x)\numu(\d\x)=\int_{\Hzeromu}P_t^{\mu}\phi_N(\x)\numu(\d\x).
\end{align*}
Now, given $\epsilon>0$, we choose $R=R(\epsilon)$ sufficiently large such that $\numu(\mathcal{D}_R^c)<\epsilon$, where
$$\mathcal{D}_R=\{U\in \Hzeromu :\|U\|_{\Hzeromu }\le R\}.$$
It follows that
\begin{align*}
\int_{\Hzeromu}P_t^{\mu}\phi_N(\x)\numu(\d\x)&=\int_{\mathcal{D}_R}P_t^{\mu}\phi_N(\x)\numu(\d\x)+\int_{\mathcal{D}_R^c}P_t^{\mu}\phi_N(\x)\numu(\d\x)\\
&\le \int_{\mathcal{D}_R}P_t^{\mu}\phi_N(\x)\numu(\d\x)+N\epsilon.
\end{align*}
We now fix the choice $\epsilon=1/N$. Next, we recall $\Psi_0(U)$ given in~\eqref{form:Psi_0} and see that
\begin{align*}
\frac{1}{2}(1-\kappa)\|U\|^2_{\Hzeromu}\le \Psi_0(U)\le \frac{1}{2}\|U\|^{2}_{\Hzeromu}.
\end{align*}
Upon invoking \eqref{ineq:exponential-bound:H^0_epsilon} in Lemma \ref{lem:moment-bound:H^0_epsilon}, we have
\begin{align*}
P_t^{\mu}\phi_N(\x)&=\E\Big[\exp\left({\beta\|U(t)\|^2_{\Hzeromu}}\right)\mi N\Big] \notag\\
&\le \E\Big[ \exp\left({\frac{2\beta}{1-\kappa}\Psi_0(U(t))}\right)  \Big]\notag\\
&\le e^{-ct}\exp\left({\frac{2\beta}{1-\kappa}\Psi_0(\x)}\right)+C,
\end{align*}
for all $t\geq0$,  for all $\beta$ sufficiently small by virtue of~\eqref{ineq:exponential-bound:H^0_epsilon} in~Lemma \ref{lem:moment-bound:H^0_epsilon}. Hence
\begin{align*}
\int_{\mathcal{D}_R}P_t^{\mu}\phi_N(\x)\numu(\d\x)&\le \int_{\mathcal{D}_R}\left(e^{-ct}\exp\left({\frac{2\beta}{1-\kappa} \Psi_0(\x)}\right)+C\right)\nu^\mu(\d\x)\notag\\
&\le \int_{\mathcal{D}_R}\left( e^{-ct}\exp\left({\frac{\beta}{1-\kappa} \|\x\|^2_{\Hzeromu}}\right)+C\right)\numu(\d\x)\\
&\le e^{-ct}\exp\left({\frac{\beta}{1-\kappa}R^2}\right)+C.
\end{align*}
In the last implication above, we emphasize that $c,\,C$ and $\beta$ are independent of $R$ and $\mu\in\Mdelta$. Altogether, we arrive at the bound
\begin{align*}
\int_{\Hzeromu}P_t^{\mu}\phi_N(\x)\numu(\d\x)\le e^{-ct}\exp\left({\frac{\beta}{1-\kappa}R^2}\right)+1+ C.
\end{align*}
Consequently
\begin{align*}
 \int_{\Hzeromu}\big(\|\x\|^n_{\Hzeromu}\mi N\big)\numu(\d\x)=\int_{\Hzeromu}\phi_N(\x) \numu(\d\x)\le e^{-ct}\exp\left({\frac{\beta}{1-\kappa}R^2}\right)+1+C.
\end{align*}
Thus, upon taking $ct\geq\beta(1-\kappa)^{-1}R^2$, we obtain
\begin{align*}
\int_{\Hzeromu}\big(e^{\beta\|\x\|^2_{\Hzeromu}}\mi N\big)\numu(\d\x)\le 2+C.
\end{align*}
Observe that this bound in uniform in $N$. Thus, \eqref{ineq:exponential-bound:nu^epsilon} follows by virtue of the Monotone Convergence Theorem, as desired.
\end{proof}

\section{Regularity of $\numu$ in dimension $d\le 3$} \label{sec:regularity}

{We recall that Theorem \ref{thm:well-posed} guarantees that the Markovian dynamics, as represented by the family of operators $P^\mu_t$, can be viewed as a mapping from $\Pcal r(\Hzeromu)\rightarrow \Pcal r(\Hzeromu)$ via push-forward. In this section, we show that when restricted to the subspace, $\mathcal{I}$, of invariant probability measures, one in fact has $(P^\mu_t)^*:\mathcal{I}\rightarrow \Pcal r(\Hmmu)$, where $m>0$, whenever the potential field and random heat sources are sufficiently smooth. From this point of view, a type of ``smoothing" occurs asymptotically since a gain in derivatives  from $m=0$ to $m>0$ is achieved. To establish this, it suffices to show that the support of every invariant probability measure $\numu$ is contained in $H^{m+1}\times M^m_\mu$. In proving the main result of this section, Theorem \ref{thm:regularity}, we will in fact prove more, and establish exponential moment bounds with respect to the topology of $\Hmmu$. The proof of this fact} will rely on the series of bootstrap arguments on moment bounds that we established in Section \ref{sec:apriori-moment-estimate}. We note that since the evolution of $\eta$ has a hyperbolic structure, it does not naturally possess a mechanism for smoothing in the manner described above. {Generally speaking, hyperbolic equations are only expected to propagate the initial regularity, not gain regularity. This suggests that the mechanism for the type of asymptotic smoothing described above is nuanced in our context. Indeed, this apparent structural obstruction is overcome by the fact that the temperature field, $u$, is coupled inhomogeneously to the history variable, $\eta$. Therefore, as long as a smoothing mechanism is present for the temperature field, the hyperbolic structure of the memory variable will propagate its regularity. Lastly, the desired smoothing mechanism for the temperature field is provided by its diffusive properties through the operator $A$.}
We {show how to successfully exploit this insight} by employing a control argument that ``asymptotically guides'' solutions into a subspace of phase space of higher regularity.

First, let us introduce the following controlled system:
\begin{equation}\label{eqn:react-diff:mu:uhat}
\begin{aligned}
\d\,\uhat(t)&=- A\uhat(t)\d t-\int_0^\infty\close\mu(s)A\etahat(t;s)\d t+\f(\uhat(t))\d t+Q\d w(t)\\
&\quad -\kappa\alpha_{\nhat}P_{\nhat}\big(\uhat(t)-u(t)\big)\d t,\\
\frac{\d}{\d t}\,\etahat(t)&=\Tcal_\mu\etahat(t)+\uhat(t),\\
(\uhat(0),\etahat(0))&=0\in\Hzeromu,
\end{aligned}
\end{equation}
where $P_{\nhat}$ is {defined in} \eqref{form:P_Nu} and $\nhat$ is chosen sufficiently large such that $\alpha_{\nhat}$ satisfies
\begin{equation} \label{cond:alpha_n.hat>a_phi}
\kappa\alpha_{\nhat}>a_\f =\sup_{x\in\rbb}\f'(x).
\end{equation}
We note that this choice of $\alpha_{\nhat}$ is possible since the {$\lim_{n\to\infty}\alpha_n=\infty$}. Observe that \eqref{eqn:react-diff:mu:uhat} only differs from \eqref{eqn:react-diff:mu} by the appearance of the term $-\kappa\alpha_{\nhat}P_{\nhat}\big(\uhat(t)-u(t)\big)$, which is a control that serves to drive the signal $\uhat$ towards $u$ on the subspace spanned by the first $n$ eigenfunctions of $A$; this particular feature is key so that it ensures the control is smooth in space.

\begin{lemma}\label{lem:moment-bound:uhat}
{ Let} $d\le 3$ { and assume the conditions} of Theorem \ref{thm:well-posed}. {Additionally assume that} \nameref{cond:phi:d=3}, \nameref{cond:Q:higher_regularity} hold for $m\ge 2$. Then

\begin{enumerate}[noitemsep,topsep=0pt,wide=0pt,label=\arabic*.,ref=\theassumption.\arabic*]
\item There exist $c,C>0$ such that for all $\x\in\Hzeromu$ and $t\ge 0$, it holds that
\begin{equation} \label{ineq:moment-bound:|u-uhat|_H<e^-ct}
\E\big\|\big(u(t)-\uhat(t),\eta(t)-\etahat(t)\big)\big\|^2_{\Hzeromu}\le Ce^{-ct}\|\x\|^2_{\Hzeromu};
\end{equation}

\item There exist positive constants $C$ and $q\in\nbb$ independent of $\x$ and $t$ such that
 \begin{equation} \label{ineq:moment-bound:|uhat|_(H^m)<e^(-ct)+|x|_H^q}
\E \|(\uhat(t),\etahat(t))\|^2_{\Hmmu}\le C(\|\x\|^q_{\Hzeromu}+1).
 \end{equation}
 \end{enumerate}
\end{lemma}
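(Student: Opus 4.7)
The proof splits into two parts, each of which exploits the structure of the feedback correction in a different way.

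For Part 1, the approach is to set $v := u - \uhat$ and $\zeta := \eta - \etahat$; the stochastic forcing $Q\,\d w$ cancels upon subtraction, so $(v,\zeta)$ solves a deterministic Cauchy problem with $(v(0),\zeta(0)) = \x$. I would then perform the standard $\Hzeromu$ energy estimate: take the $H$--inner product of the $v$--equation with $v$ and add $(1-\kappa)$ times the $\Mzeromu$--inner product of the $\zeta$--equation with $\zeta$. The cross terms $\pm(1-\kappa)\la v,\zeta\ra_{\Mzeromu}$ cancel; the memory transport contributes $-\tfrac{1}{2}\delta(1-\kappa)\|\zeta\|^2_{\Mzeromu}$ via \eqref{ineq:<T.eta,eta>}; the nonlinearity contributes $\la \f(u)-\f(\uhat),v\ra_H\le a_\f\|v\|_H^2$ by the mean value theorem and \nameref{cond:phi:3}; and the feedback produces exactly $-\kappa\alpha_{\nhat}\|P_{\nhat}v\|_H^2$. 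Decomposing $\|v\|_H^2$ according to $P_{\nhat}$ and using the spectral estimate $-\kappa\|A^{1/2}v\|_H^2\le -\kappa\alpha_{\nhat+1}\|(I-P_{\nhat})v\|_H^2$, the net coefficient on $\|v\|_H^2$ becomes $-(\kappa\alpha_{\nhat}-a_\f)$, which is strictly negative by \eqref{cond:alpha_n.hat>a_phi}. Gronwall's inequality then yields the deterministic (and hence expectational) exponential decay asserted in \eqref{ineq:moment-bound:|u-uhat|_H<e^-ct}.

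For Part 2, the plan is a bootstrap on the Lyapunov functionals $\Psi_k(\uhat,\etahat)$ for $k=0,1,\dots,m$, mirroring Lemmas \ref{lem:moment-boud:H^2xM^2:d=3} and \ref{lem:moment-bound:H^m:d=3}. The crucial structural point is that the feedback is finite-rank: since $P_{\nhat}$ commutes with $A$ and satisfies $\|A^{k/2}P_{\nhat}w\|_H\le\alpha_{\nhat}^{k/2}\|w\|_H$, the extra contribution to $\L^\mu\Psi_k(\uhat,\etahat)$ takes the form $-\kappa\alpha_{\nhat}\la P_{\nhat}(\uhat-u),A^k P_{\nhat}\uhat\ra_H$, which by Cauchy--Schwarz and Young's inequality is dominated by $C_{\nhat,k}(\|\uhat\|_H^2+\|u\|_H^2)$, with a constant depending only on the (fixed) integer $\nhat$. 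Consequently, $\L^\mu\Psi_k(\uhat,\etahat)$ reproduces the dissipative structure of the uncontrolled analysis (with the nonlinear estimates of Lemma \ref{lem:moment-bound:H^m:d=3} transferring verbatim in $d\le 3$ under \nameref{cond:phi:d=3}) perturbed only by a harmless lower-order term of order $\|u\|_H^2$. Taking expectations, invoking the $L^2$ moment bound $\E\|u(r)\|_H^2\le C(\|\x\|^2_{\Hzeromu}+1)$ supplied by Lemma \ref{lem:moment-bound:H^0_epsilon}, and iterating via Gronwall together with Lemma \ref{lem:tech:1} yields \eqref{ineq:moment-bound:|uhat|_(H^m)<e^(-ct)+|x|_H^q}, the exponent $q$ increasing along the induction but depending only on $m$ and $\f$.

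The principal obstacle is ensuring that the coupling $P_{\nhat}(\uhat-u)$ never drags a high-order Sobolev moment of $u$ into the chain of estimates, which would be fatal since the hypothesis supplies only $\x\in\Hzeromu$. This is precisely the purpose of confining the control to the finite-dimensional range of $P_{\nhat}$: every derivative that would fall on $P_{\nhat}u$ is absorbed into the fixed factor $\alpha_{\nhat}^{k/2}$, leaving $\|u\|_H^2$ as the only dependence on $u$ at every level of the bootstrap. A secondary subtlety is that the feedback contributes a useful dissipative sign ($-\kappa\alpha_{\nhat}\|P_{\nhat}\uhat\|_H^2$) only at the base level; at higher levels one must instead rely on the intrinsic viscous dissipation $-\kappa\|A^{(k+1)/2}\uhat\|_H^2$ to absorb the positive residual, which is routine once the commutativity of $A$ and $P_{\nhat}$ is invoked.
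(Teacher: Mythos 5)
Your proposal is correct and follows essentially the same route as the paper: for Part 1 the same cancellation of the noise, the same spectral splitting via $P_{\nhat}$ together with \eqref{cond:alpha_n.hat>a_phi} and Gronwall; for Part 2 the same observation that the finite-rank feedback perturbs $\L^\mu\Psi_k(\uhat,\etahat)$ only by a term controlled by $\alpha_{\nhat}^{k+1}\|u\|^2_H$ (the paper absorbs the $\uhat$ part into $-\alpha_{\nhat}\|P_{\nhat}\uhat\|^2_{H^k}$ rather than into the viscous dissipation, an immaterial difference), followed by the bootstrap of Lemma \ref{lem:moment-bound:H^m:d=3} with zero initial data and the $H$--moment bounds of Lemma \ref{lem:moment-bound:H^0_epsilon}. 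No gaps.
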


Let us first prove Theorem \ref{thm:regularity} assuming that Lemma \ref{lem:moment-bound:uhat} holds.
\begin{proof}[Proof of Theorem \ref{thm:regularity}]
For $R,N\ge 1$, set
\begin{align*}
\psi_{R,N}(U):=R\mi\big( \|P_Nu\|_{H^m}+\|P_N\eta\|_{\Mmmu}\big),
\end{align*}
where $P_N$ is as in \eqref{form:P_Nu}. By invariance, it holds that
\begin{align} \label{ineq:nu-invariance}
\int_{\Hzeromu}\psi_{R,N}(\x)\numu(\d\x)=\int_{\Hzeromu}P_t^{\mu}\psi_{R,N}(\x)\numu(\d\x).
\end{align}
Recalling the ``shifted" system \eqref{eqn:react-diff:mu:uhat}, for every $\x\in\Hzeromu$, observe that
\begin{align*}
P_t^{\mu}\psi_{R,N}(\x)
&=\E\big[R\mi \big( \|P_Nu(t)\|_{H^m}+\|P_N\eta(t)\|_{\Mmmu}\big)\big]\\
&\le \E\|P_N(u(t)-\uhat(t))\|_{H^m}+\E\|P_N(\eta(t)-\etahat(t))\|_{\Mmmu}\\
&\quad+\E\|P_N\uhat(t)\|_{H^m}+\E\|P_N\etahat(t)\|_{\Mmmu}\\
&\le  \alpha_N^{m/2}\E\big\|\big(u(t)-\uhat(t),\eta(t)-\etahat(t)\big)\big\|_{\Hzeromu}+\E\big\|\big(\uhat(t),\etahat(t)\big)\big\|_{\Hmmu}.
\end{align*}
In light of Lemma \ref{lem:moment-bound:uhat}, we infer the existence of positive constants $c,C$ and $q$ independent of $\x,\,t,\,R$ and $N$ such that
\begin{align*}
P_t^{\mu}\psi_{R,N}(\x) \le C\alpha_N^{m/2}e^{-ct}\|\x\|_{\Hzeromu}+C\|\x\|^q_{\Hzeromu}+C.
\end{align*}
Hence
\begin{align*}
\int_{\Hzeromu} P_t^{\mu}\psi_{R,N}(\x)\numu(\d\x)&\le  C\alpha_N^{m/2}e^{-ct}\int_{\Hzeromu}\|\x\|_{\Hzeromu}\numu(\d\x)\\
&\quad\ +C\int_{\Hzeromu}\|\x\|^q_{\Hzeromu}\numu(\d\x)+C.
\end{align*}
By Theorem \ref{thm:existence} (see \eqref{ineq:exponential-bound:nu^epsilon}), we readily deduce
\begin{align*}
\int_{\Hzeromu}\|\x\|_{\Hzeromu}\numu(\d\x)+\int_{\Hzeromu}\|\x\|^q_{\Hzeromu}\numu(\d\x)<\infty.
\end{align*}
In particular, it follows that
\begin{align*}
\int_{\Hzeromu} P_t^{\mu}\psi_{R,N}(\x)\numu(\d\x)&\le C\alpha_N^{m/2}e^{-ct}+C.
\end{align*}
Given $N,R$, we may now choose $t$ sufficiently large, so that
\begin{align*}
\int_{\Hzeromu} P_t^{\mu}\psi_{R,N}(\x)\numu(\d\x)\le C,
\end{align*}
holds. Together with \eqref{ineq:nu-invariance}, this implies
\begin{align*}
\int_{\Hzeromu} R\mi\big( \|P_Nu\|_{H^m}+\|P_N\eta\|_{\Mmmu}{\big)}\numu(\d u,\d\eta)<C,
\end{align*}
An application of the Monotone Convergence Theorem then yields
\begin{align*}
\int_{\Hzeromu}\|\x\|_{\Hmmu}\numu(\d\x)<\infty,
\end{align*}
that is, the support of $\numu$ is contained in $\Hmmu$.

With regard to the exponential moment bound~\eqref{ineq:exponential-bound:nu(H^m)}, we employ an argument similar to the proof of~\eqref{ineq:exponential-bound:nu^epsilon} by making use of~\eqref{ineq:exponential-bound:H^m:d=3} to establish~\eqref{ineq:exponential-bound:nu(H^m)}; see the proof of Theorem \ref{thm:existence} in Section \ref{sec:existence}.

Lastly, concerning the moment bound \eqref{ineq:moment-bound:nu(H^m)}, we first note that~\eqref{ineq:exponential-bound:nu(H^m)} implies the following estimate for all $p> 0$
\begin{align*}
\sup_{\mu\in\Mdelta}\int_{\Hmmu} \|\x\|^p_{\Hmmu}\numu(\d\x)<\infty.
\end{align*}
Now for $R>1$, from estimate \eqref{ineq:moment-bound:H^m:d=3}, observe that for all $t\ge 0$
\begin{align*}
\int_0^t \int_{\Hmmu}\E\big[R\mi \|u(r)\|^n_{H^m}\|u(r)\|^2_{H^{m+1}}\big]\numu(\d \x)\d r & \le C\int_{\Hmmu}\|\x\|^{q}\numu(\d\x)+Ct\\
&\le C+Ct.
\end{align*}
We invoke invariance again
\begin{align*}
\int_{\Hmmu}\E\big[R\mi \|u(r)\|^n_{H^m}\|u(r)\|^2_{H^{m+1}}\big]\numu(\d \x)= \int_{\Hmmu}\big(R\mi \|u\|^n_{H^m}\|u\|^2_{H^{m+1}}\big)\numu(\d u,\d\eta).
\end{align*}
Hence, for all $t>0$, it holds that
\begin{align*}
\int_{\Hmmu}\big[R\mi \|u\|^n_{H^m}\|u\|^2_{H^{m+1}}\big]\numu(\d u,\d\eta)\le \frac{C}{t}+C.
\end{align*}
Upon passing to the limit $t\rightarrow\infty$, we arrive at
\begin{align*}
\int_{\Hmmu}\big[R\mi \|u\|^n_{H^m}\|u\|^2_{H^{m+1}}\big]\numu(\d u,\d\eta)\le C.
\end{align*}
Note that this bound is uniform in $R\ge 1$ and $\mu\in\Mdelta$. The Monotone Convergence Theorem then implies
\begin{align*}
\int_{\Hmmu} \|u\|^n_{H^m}\|u\|^2_{H^{m+1}}\numu(\d u,\d\eta)\le C,
\end{align*}
which produces the desired bound \eqref{ineq:moment-bound:nu(H^m)}.
\end{proof}

{Lastly,} {we now turn our attention to the process $(\uhat(t),\etahat(t))$ defined in \eqref{eqn:react-diff:mu:uhat} and prove Lemma \ref{lem:moment-bound:uhat}; this will finally complete the proof of Theorem \ref{thm:regularity}}.

\begin{proof}[Proof of Lemma \ref{lem:moment-bound:uhat}]
{We must establish \eqref{ineq:moment-bound:|u-uhat|_H<e^-ct} and \eqref{ineq:moment-bound:|uhat|_(H^m)<e^(-ct)+|x|_H^q}.} To establish \eqref{ineq:moment-bound:|u-uhat|_H<e^-ct}, we first set $z=\uhat-u$ and $\zeta=\etahat-\eta$ and observe that
\begin{equation}\label{eqn:react-diff:mu:uhat-u}
\begin{aligned}
\frac{\d}{\d t}z(t)&=- Az(t)-\int_0^\infty\close\mu(s)A\zeta(t;s)\d s+\f(\uhat(t))-\f(u(t))-\kappa\alpha_{\nhat}P_{\nhat}z(t),\\
\frac{\d}{\d t}\zeta(t)&=\Tcal_\mu\zeta(t)+z(t),\\
(z(0),\zeta(0))&=-\x\in\Hzeromu.
\end{aligned}
\end{equation}
Recalling $\Psi_0(u,\eta)=\frac{1}{2}\|u\|^2_H+\frac{1}{2}(1-\kappa)\|\eta\|^2_{\Mzeromu}$ as in \eqref{form:Psi_0}, a routine calculation gives
\begin{align*}
\frac{\d}{\d t}\Psi_0(z(t),\zeta(t))&= -\kappa\|A^{1/2}z(t)\|^2_H+(1-\kappa)\la \Tcal_\mu\zeta(t),\zeta(t)\ra_{\Mzeromu}\\
&\quad\ +\la \f(\uhat(t))-\f(u(t)),z(t)\ra_H-\kappa\alpha_{\nhat}\|P_{\nhat}z(t)\|^2_H.
\end{align*}
We invoke \eqref{ineq:<T.eta,eta>} to obtain
\begin{align*}
\la \Tcal_\mu\zeta(t),\zeta(t)\ra_{\Mzeromu}\le -\frac{1}{2}\delta\|\zeta(t)\|^2_{\Mzeromu}.
\end{align*}
To estimate the nonlinear term, in light of \nameref{cond:phi:3}
\begin{align*}
\la \f(\uhat(t))-\f(u(t)),z(t)\ra_H\le a_\f\|z(t)\|^2_H.
\end{align*}
Recalling the choice $\alpha_{\nhat}$ as in \eqref{cond:alpha_n.hat>a_phi}, we then may bound the nonlinear term as follows:
\begin{align*}
&-\kappa\|A^{1/2}z(t)\|^2_H+\la \f(\uhat(t))-\f(u(t)),z(t)\ra_H-\kappa\alpha_{\nhat}\|P_{\nhat}z(t)\|^2_H\\
&\le -\kappa\|P_{\nhat}A^{1/2}z(t)\|_H^2-\kappa\|(I-P_{\nhat})A^{1/2}z(t)\|^2_H-(\kappa\alpha_{\nhat}-a_\f)\|z(t)\|^2_H\\
&\quad\ +\kappa\alpha_{\nhat}\|(I-P_{\nhat})z(t)\|_H^2\\
&\le -\kappa\|P_{\nhat}A^{1/2}z(t)\|_H^2-(\kappa\alpha_{\nhat}-a_\f)\|z(t)\|^2_H\\
&\le -(\kappa\alpha_{\nhat}-a_\f)\|z(t)\|^2_H,
\end{align*}
where we invoked the Poincar\'e inequality (see \eqref{eqn:Ae_k=-alpha.e_k}) in obtaining the penultimate inequality. It follows that
\begin{align*}
\frac{\d}{\d  t}\Psi_0(z(t),\zeta(t))&\le  -(\kappa\alpha_{\nhat}-a_\f)\|z(t)\|^2_H-\frac{1}{2}(1-\kappa)\delta \|\zeta(t)\|^2_{\Mzeromu}\\
&\le -\min\{2(\kappa\alpha_{\nhat}-a_\f),\delta \}\Psi_0(z(t),\zeta(t)),
\end{align*}
whence
\begin{align*}
\Psi_0(z(t),\zeta(t)) &\le e^{-\min\{2(\kappa\alpha_{\nhat}-a_\f),\delta\}t}\Psi_0(z(0),\zeta(0)).
\end{align*}
As a consequence,
\begin{align*}
\|{(z(t),\zeta(t))}\|^2_{\Hzeromu} &\le\frac{1}{1-\kappa} e^{-\min\{2(\alpha_{\nhat}-a_\f),\delta\}t}\|{(z(0),\zeta(0))}\|^2_{\Hzeromu}\\
&=\frac{1}{1-\kappa} e^{-\min\{2(\alpha_{\nhat}-a_\f),\delta\}t}\|\x\|^2_{\Hzeromu}.
\end{align*}
This produces estimate \eqref{ineq:moment-bound:|u-uhat|_H<e^-ct}.

{Next, we prove \eqref{ineq:moment-bound:|uhat|_(H^m)<e^(-ct)+|x|_H^q}. Let us denote} by $\Lhat$ the generator associated with \eqref{eqn:react-diff:mu:uhat}. {Note} that for any $k=0,\dots,m$, {we have}
\begin{align*}
\Lhat{\Psi_k}(\uhat,\etahat)&=-\L\Psi_k(\uhat,\etahat)-\alpha_{\nhat}\la P_{\nhat}z,\uhat\ra_{H^k},
\end{align*}
which only differs from $\L\Psi_k(\uhat,\etahat)$, (see \eqref{form:L^epsilon}), by the appearance of the term\break $-\alpha_{\nhat}\la P_{\nhat}z,\uhat\ra_{H^k}$. Furthermore, using Young's inequality, it holds that
\begin{align*}
-\alpha_{\nhat}\la P_{\nhat}z,\uhat\ra_{H^k}&=-\alpha_{\nhat}\|P_{\nhat}\uhat\|^2_{H^k}+\alpha_{\nhat}\la P_{\nhat}\uhat,P_{\nhat}u\ra_{H^k}\\&\le \frac{1}{4}\alpha_{\nhat}\|P_{\nhat}u\|^2_{H^k}\\
&\le \frac{1}{4}\alpha_{\nhat}^{k+1}\|u\|^2_{H}.
\end{align*}
{In particular}
\begin{align*}
\Lhat{\Psi_k}(\uhat,\etahat)\le \L\Psi_k(\uhat,\etahat)+\frac{1}{4}\alpha_{\nhat}^{k+1}\|u\|^2_{H}.
\end{align*}
Upon recalling that $(\uhat(0),\etahat(0))=0$, we see that we may therefore employ the same strategy used in establishing \eqref{ineq:moment-bound:H^k} in order to obtain a similar bound for $(\uhat(t),\etahat(t))$, namely
\begin{align*}
\E\|{(\uhat(t),\etahat(t))}\|^2_{\H^m}\le C\int_0^te^{-c(t-r)}\|u(r)\|^q_H\d r+C.
\end{align*}
In the above, we emphasize that $C,c$ and $q$ are positive constants independent of $t$ and $\x$, {but dependent on $\nhat$}. Invoking Lemma \ref{lem:moment-bound:H^0_epsilon} with the fact that $\|u\|^q_H$ is dominated by $\Psi_0(u,\eta)^q$, we obtain
\begin{align*}
\E\|{(\uhat(t),\etahat(t))}\|^2_{\H^m}\le C(\Psi_0(\x)^{q}+1).
\end{align*}
This implies \eqref{ineq:moment-bound:|uhat|_(H^m)<e^(-ct)+|x|_H^q}, thus completing the proof.
\end{proof}

\section*{Acknowledgments}
The authors would like to thank anonymous referees for their providing a thorough review of this work. We appreciate their careful reading and insightful comments, which have improved the manuscript. The work of N. E. Glatt-Holtz was partially supported under the
National Science Foundation grants DMS-1313272, DMS-1816551, DMS-2108790,
and under a Simons Foundation travel support award 515990. The work of V. R. Martinez was partially supported by NSF-DMS 2213363 and NSF-DMS 2206491, PSC-CUNY Award 65187-00 53, which is jointly funded by The Professional Staff Congress and The City University of New York, and the Dolciani Halloran Foundation.

\appendix

\section{Well-posedness} \label{sec:well-posed}
This Appendix is dedicated to addressing the well-posedness of \eqref{eqn:react-diff:mu} via the classical Galerkin approximation. The approach that we employ follows closely the method found in \cite[Section 4]{giorgi1999uniform}, \cite{glatt2009strong}, and~\cite[Chapter 7]{robinson2001infinite} tailored to our settings. Due to the stochastic forcing term, we however will not truncate~\eqref{eqn:react-diff:mu} directly. We consider instead the following ``shifted" system
    \begin{align}
        \bdy_tv&=-\kappa Av-\kappa A\xi-(1-\kappa)\int_0^{\infty}\close \mu(s)A\eta(t,s)\d s+\f(v+\xi),\notag\\
        \bdy_t\eta&=\Tcal_\mu\eta+v+\xi, \label{eqn:shift}\\
        \d \xi&=Q\d w(t),\notag
    \end{align}
    together with the initial condition $v(0)=u_0,\, \eta(0)=\eta_0$ and $\xi(0)=0$. The solution to the original system~\eqref{eqn:react-diff:mu} is then recovered upon setting $u=v+\xi$.

\subsection{A priori estimate}
Recall the basis $\{e_k\}_{k\ge 1}$ as in~\eqref{eqn:Ae_k=-alpha.e_k}, we first look for a pair $(U_n,\eta_N)$ given by
\begin{align*}
V_N(t;x)= \sum_{k=1}^N v_k(t)e_k(x),\quad \eta_N(t;s,x)=\sum_{k=1}^N \eta_k(t;s)e_k(x),
\end{align*}
solving the following finite--dimensional system
\begin{equation} \label{eqn:shift:Galerkin}
\begin{aligned}
\bdy_t V_N(t)&=-\kappa AV_N(t)-\kappa AP_N\xi(t)-\int_0^\infty\close \mu(s)A\eta_N(t;s)\d s+P_N\f(V_N(t)+\xi(t)),\\
\bdy_t \eta_N(t)&= \Tcal_\mu\eta_N(t)+V_N(t)+P_N\xi(t),\qquad V_N(0)=P_N u_0,\,\eta_N(0)=P_N\eta_0.
\end{aligned}
\end{equation}
In what follows, we assert that the above Galerkin system is well-posed and that the solutions are uniformly bounded in $N$.

\begin{lemma} \label{lem:Galerkin}
For all $\x=(u_0,\eta_0)\in\Hzeromu$ and $t\ge 0$, there exists a unique strong solution pair $(V_N,\eta_N)\in C([0,t];H\times\Mzeromu)$. Furthermore, there exists a positive constant $c(\x,t)$ independent of $N$ such that the following holds
\begin{align*}
&\E\sup_{0\le r\le t}\|(V_N(r),\eta_N(r))\|^2_{\Hzeromu}\\
&+\int_0^t\E\big(\|V_N(r)\|^2_{H^1}+\|\eta_N(r)\|^2_{\Mzeromu} +\|V_N(r)\|^{p+1}_{L^{p+1}}+\|\f(V_N(r)+\xi(r))\|^q_{L^{q}}\big)\emph{d} r\\
&\le c(\x,t),
\end{align*}
where $q=(p_0+1)/p_0$ and $p_0$ is as in \nameref{cond:phi:2}.
\end{lemma}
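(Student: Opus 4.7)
The plan is to treat the auxiliary stochastic convolution $\xi(t)=\int_0^tQ\d w(r)$ pathwise, so that for each realization \eqref{eqn:shift:Galerkin} becomes a \emph{deterministic} integro-differential initial value problem. Since $\Tcal_\mu$ generates the right-translation semigroup on $\Mzeromu$, the transport equation for $\eta_N$ admits, for each fixed $k\leq N$, the explicit representation from~\eqref{form:eta(t)-representation} applied to the forcing $\la V_N+P_N\xi,e_k\ra_H$. Substituting this back into the equation for $V_N$ reduces \eqref{eqn:shift:Galerkin} to a closed, locally Lipschitz Volterra-type equation for the $N$ coefficients of $V_N$; standard ODE theory together with the $C^1$-regularity of $\f$ from~\nameref{cond:phi} then furnishes a unique local-in-time strong solution, and the corresponding $\eta_N\in C([0,t];\Mzeromu)$ is recovered from the representation formula.

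To promote local existence to a global one and to extract the claimed bounds, I would test the $V_N$ equation against $V_N$ in $H$ and the $\eta_N$ equation against $(1-\kappa)\eta_N$ in $\Mzeromu$, then add. The crucial algebraic observation is that the cross terms cancel: the memory contribution $-(1-\kappa)\int_0^\infty\mu(s)\la A\eta_N(s),V_N\ra_H\d s$ in the $V_N$ equation is exactly annihilated by $(1-\kappa)\la V_N,\eta_N\ra_{\Mzeromu}$ arising from the $\eta_N$ equation, since the $\Mzeromu$ inner product moves an $A$ onto $V_N$. What remains is the coercive skeleton $-\kappa\|A^{1/2}V_N\|^2_H+(1-\kappa)\la \Tcal_\mu\eta_N,\eta_N\ra_{\Mzeromu}$, which by \eqref{ineq:<T.eta,eta>} dominates $-\kappa\|A^{1/2}V_N\|^2_H-\tfrac{1}{2}(1-\kappa)\delta\|\eta_N\|^2_{\Mzeromu}$; the $\xi$-linear terms $-\kappa\la AP_N\xi,V_N\ra_H$ and $(1-\kappa)\la P_N\xi,\eta_N\ra_{\Mzeromu}$ are then absorbed by Cauchy--Schwarz and Young's inequality, producing only pathwise forcing in $\|A^{1/2}\xi\|^2_H$ and $\|\xi\|^2_{\Mzeromu}$.

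The main technical obstacle lies in the nonlinearity, which I would handle by the decomposition
\begin{align*}
\la P_N\f(V_N+\xi),V_N\ra_H=\la \f(V_N+\xi),V_N+\xi\ra_H-\la \f(V_N+\xi),\xi\ra_H.
\end{align*}
Applying \nameref{cond:phi:2} to the first summand yields the coercive contribution $-a_2\|V_N+\xi\|^{p_0+1}_{L^{p_0+1}(\domain)}+a_3|\domain|$, while \nameref{cond:phi:1} together with H\"older's and Young's inequalities allows the second summand to be absorbed into half of this coercive term at the cost of a pathwise forcing in $\|\xi\|^{p_0+1}_{L^{p_0+1}(\domain)}$. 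The surviving $L^{p_0+1}$ bound both precludes blow-up of the ODE, yielding global existence, and directly produces the estimate on $\|V_N\|^{p_0+1}_{L^{p_0+1}}$ claimed in the lemma. The bound on $\|\f(V_N+\xi)\|^{q}_{L^{q}}$ with $q=(p_0+1)/p_0$ then follows immediately from \nameref{cond:phi:1}, since $p_0q=p_0+1$.

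After these pathwise manipulations I would integrate on $[0,r]$ for $r\in[0,t]$, take the pathwise supremum, and only then pass to expectation. The remaining task is to bound $\E\sup_{0\le r\le t}\|A^{1/2}\xi(r)\|^2_H$ and $\E\sup_{0\le r\le t}\|\xi(r)\|^{p_0+1}_{L^{p_0+1}(\domain)}$ by a finite constant depending on $t$; the first follows from $\Tr(QAQ)<\infty$ in~\nameref{cond:Q} via the Burkholder--Davis--Gundy inequality applied to an $H^1$-valued stochastic integral, while the second follows from the Gaussianity of $\xi$ combined with the hypothesis $\sup_{x\in\domain}\sum_k|Qe_k(x)|^2<\infty$ in~\nameref{cond:Q} and Sobolev embedding. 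The key point making all estimates uniform in $N$ is that every constant entering the energy identity depends only on $\kappa,\delta,\f,Q,\x$ and $t$, but not on the Galerkin dimension, as both the cancellation structure and \eqref{ineq:<T.eta,eta>} are preserved under the projection $P_N$.
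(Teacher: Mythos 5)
Your proposal follows essentially the same route as the paper's proof: the same $\Psi_0$-type energy identity with cancellation of the memory cross terms, the bound \eqref{ineq:<T.eta,eta>}, the identical decomposition $\la P_N\f(V_N+\xi),V_N\ra_H=\la\f(V_N+\xi),V_N+\xi\ra_H-\la\f(V_N+\xi),\xi\ra_H$ handled via \nameref{cond:phi:1}--\nameref{cond:phi:2}, and the Gaussianity of $\xi$ together with the pointwise variance bound from \nameref{cond:Q} to control its $L^{p_0+1}$ moments. The only cosmetic differences are your more explicit Volterra reduction for local existence (the paper cites standard references) and your use of $\E\sup_r$ of $\xi$ via maximal inequalities, which is more than needed since the $\xi$-forcing appears only under a time integral, so fixed-time moments suffice (and avoid any reliance on a Sobolev embedding $H^1\hookrightarrow L^{p_0+1}$, which need not hold for large $p_0$).
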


\begin{proof}
From \eqref{eqn:shift:Galerkin}, a routine calculation gives
\begin{align*}
\frac{1}{2}\bdy_t \|V_N(t)\|^2_H&=-\kappa \|A^{1/2}V_N(t)\|^2_H-\kappa\la A^{1/2}V_N(t),A^{1/2}\xi(t)\ra_H\\
&\quad\ - (1-\kappa)\int_0^\infty\close\mu(s)\la A^{1/2}\eta_N(t;s),A^{1/2}V_N(t)\ra_H\d s\\
&\quad\ +\la P_N\f(V_N(t)+\xi(t)),V_N(t)\ra_H.
\end{align*}
Concerning $\eta_N$, from \eqref{eqn:shift:Galerkin}, we see that
\begin{align*}
\frac{1}{2}\bdy_t\|\eta_N(t)\|^2_{\Mzeromu}
&=\la \Tcal_\mu \eta_N(t),\eta_N(t)\ra_{\Mzeromu}+\int_0^\infty\close \mu (s)\la A^{1/2}\eta_N(t;s),A^{1/2}V_N(t)\ra_H\d s\\
&\quad\ +\int_0^\infty\close \mu (s)\la A^{1/2}\eta_N(t;s),A^{1/2}\xi(t)\ra_H\d s.
\end{align*}
So, recalling $\Psi_0(u,\eta)=\frac{1}{2}\|u\|^2_H+\frac{1}{2}(1-\kappa)\|\eta\|^2_{\Mzeromu}$ as in \eqref{form:Psi_0}, we combine the above two identities and cancel the integrals to obtain
\begin{equation} \label{eqn:Psi_0(U_N,eta^n):Galerkin}
\begin{aligned}
&\bdy_t \Psi_0(V_N(t),\eta_N(t)) \\
&=-\kappa \|A^{1/2}V_N(t)\|^2_H+(1-\kappa)\la \Tcal \eta_N(t),\eta_N(t)\ra_{\Mzeromu}-\kappa\la A^{1/2}V_N(t),A^{1/2}\xi(t)\ra_H\\
&\quad\ +\la P_N\f(V_N(t)+\xi(t)),V_N(t)\ra_H+ (1-\kappa)\la \eta_N(t),\xi(t)\ra_{\Mzeromu}.
\end{aligned}
\end{equation}
We invoke inequality \eqref{ineq:<T.eta,eta>} to further bound
\begin{align*}
\la \Tcal_\mu \eta_N(t),\eta_N(t)\ra_{\Mzeromu}\le -\frac{\delta}{2}\|\eta_N(t)\|^2_{\Mzeromu}.
\end{align*}
Next, we employ Cauchy-Schwarz inequality to estimate
\begin{align*}
\la A^{1/2}V_N(t),A^{1/2}\xi(t)\ra_H \le \frac{1}{2}\|A^{1/2}V_N(t)\|^2_H+\frac{1}{2}\|A^{1/2}\xi(t)\|^2_H.
\end{align*}
Likewise,
\begin{align*}
\la \eta_N(t),\xi(t)\ra_{\Mzeromu} &\le \frac{\delta}{4}\|\eta_N(t)\|^2_{\Mzeromu}+\frac{1}{\delta}\|\xi(t)\|^2_{\Mzeromu}\\
&=\frac{\delta}{4}\|\eta_N(t)\|^2_{\Mzeromu}+\frac{\|\mu\|_{L^1(\rbb^+)}}{\delta}\|A^{1/2}\xi(t)\|^2_{H}.
\end{align*}
So that,
\begin{equation}\label{ineq:Psi_0(U_N,eta^n):Galerkin:a}
\begin{aligned}
&-\kappa\|A^{1/2}V_N(t)\|^2_H+(1-\kappa)\la \Tcal_\mu \eta_N(t),\eta_N(t)\ra_{\Mzeromu}-\kappa\la A^{1/2}V_N(t),A^{1/2}\xi(t)\ra_H\\
& +(1-\kappa) \la \eta_N(t),\xi(t)\ra_{\Mzeromu}\\
&\le -\frac{1}{2}\kappa\|A^{1/2}V_N(t)\|^2_H-\frac{\delta}{4}(1-\kappa)\|\eta_N(t)\|^2_{\Mzeromu}\\
&\quad\ +\Big(\frac{1}{2}\kappa+\frac{\|\mu\|_{L^1(\rbb^+)}}{\delta}(1-\kappa)\Big)\|A^{1/2}\xi(t)\|^2_H.
\end{aligned}
\end{equation}
To estimate the non-linear term involving $\f$ in~\eqref{eqn:Psi_0(U_N,eta^n):Galerkin}, we employ \nameref{cond:phi:1}--\nameref{cond:phi:2} to see that
\begin{align*}
&\la P_N\f(V_N(t)+\xi(t)),V_N(t)\ra_H\\
&=\la \f(V_N(t)+\xi(t)),V_N(t)+\xi(t)\ra_H-\la \f(V_N(t)+\xi(t)),\xi(t)\ra_H\\
&\le -a_2\|V_N(t)+\xi(t)\|^{p_0+1}_{L^{p_0+1}}+a_3|\domain|+a_1\la |V_N(t)+\xi(t)|^{p_0},|\xi(t)|\ra_H+a_1\|\xi(t)\|_{L^1(\domain)}.
\end{align*}
We thus deduce the estimate
\begin{align}\label{ineq:Psi_0(U_N,eta^n):Galerkin:b}
&\la P_N\f(V_N(t)+\xi(t)),V_N(t)\ra_H \notag \\
&\le -c\|V_N(t)\|^{p_0+1}_{L^{p_0+1}}+C\|\xi(t)\|^{p_0+1}_{L^{p_0+1}}+C\|\xi(t)\|^2_H+C.
\end{align}
Combining \eqref{eqn:Psi_0(U_N,eta^n):Galerkin}, \eqref{ineq:Psi_0(U_N,eta^n):Galerkin:a}, \eqref{ineq:Psi_0(U_N,eta^n):Galerkin:b}, integrating with respect to time and taking expectations, we arrive at the bound
\begin{align*}
&\E \sup_{0\le r\le t}\Psi_0(V_N(r),\eta_N(r))\\
& +\int_0^t\frac{1}{2}\kappa \E\|A^{1/2}V_N(r)\|^2_H+\frac{\delta}{4}(1-\kappa)\E\|\eta_N(r)\|^2_{\Mzeromu}+c\,\E\|V_N(r)\|^{p_0+1}_{L^{p_0+1}}\d r\\
&\le \Psi_0(P_N u_0,P_N\eta_0)+Ct+C\int_0^t\E\|\xi(r)\|^{p_0+1}_{L^{p_0+1}}+\E\|\xi(r)\|^2_{H^1} \d r,
\end{align*}
for some positive constants $c,C$ independent of time $t$. We note that by condition~\nameref{cond:Q}, $\text{Law}(\xi(t))$ is a Gaussian measure in $H^{1}$, since
\begin{align*}
\E\|\xi(t)\|^2_{H^{1}}=\E\Big\|\sum_{k\ge 1}Qe_k B_k(t)\Big\|^2_{H^{1}}=\Tr(QAQ^*)t<\infty.
\end{align*}
Observe that \nameref{cond:Q} also implies
\begin{align*}
    \sup_{x\in\domain}\E|\xi(t,x)|^2=    \sup_{x\in\domain}\E\Big| \sum_{k\ge 1}Q e_k(x) B_k(t) \Big|^2 =     \sup_{x\in\domain}\sum_{k\ge 1} |Qe_k(x)|^2 t <\infty.
\end{align*}
Since $\xi(x,t)$ is Gaussian, we infer the bound
\begin{align*}
   \E \|\xi(t)\|^{p_0+1}_{L^{p_0+1}}= \int_{\domain}\E |\xi(t,x)|^{p_0+1}\d x \le |\domain|\sup_{x\in\domain} \E|\xi(t,x)|^{p_0+1}\le c t^{\frac{p_0+1}{2}},
\end{align*}
where $c>0$ is independent of $t$. Combining the above estimate, we obtain
\begin{align*}
\int_0^t\E\|\xi(r)\|^{p_0+1}_{L^{p_0+1}}+\E\|\xi(r)\|^2_{H^1} \d r \le C(t,p_0),
\end{align*}
whence
\begin{equation} \label{ineq:Psi_0(U_N,eta^n):Galerkin:c}
\begin{aligned}
&\E \sup_{0\le r\le t}\Psi_0(V_N(r),\eta_N(r))\\
& +\int_0^t\frac{1}{2}\kappa \E\|A^{1/2}V_N(r)\|^2_H+\frac{\delta}{4}(1-\kappa)\E\|\eta_N(r)\|^2_{\Mzeromu}+c\,\E\|V_N(r)\|^{p_0+1}_{L^{p_0+1}}\d r\\
&\le\Psi_0(P_N u_0,P_N\eta_0)+C(t,p_0).
\end{aligned}
\end{equation}
By employing a standard argument for systems of stochastic differential equations (see \cite{albeverio2008spde,glatt2009strong}, for instance), the {truncated} system \eqref{eqn:shift:Galerkin} admits a unique global pathwise solution $(V_N,\eta_N)$ in $C([0,T];H\times\Mzeromu)$, for all $T>0$.

Furthermore, to produce a uniform bound on $\f(V_N(t)+\xi(t))$, we recall \nameref{cond:phi:1} and $q=(p_0+1)/p_0$. So that,
\begin{align}
\int_0^t \E\|\f(V_N(r)+\xi(t))\|^q_{L^{q}}\d r&\le \int_0^t \E\int_\domain\big(a_1(1+|V_N(r)|^{p_0}+|\xi(r)|^{p_0}\big)^q\d x\d r \notag\\
&\le c t+c\int_0^t\E\|V_N(r)\|^{p_0+1}_{L^{p_0+1}}+\|\xi(r)\|^{p_0+1}_{L^{p_0+1}}\d r.    \label{ineq:|f(U_N)+xi|_(L^q)}
\end{align}
The proof is thus complete.
\end{proof}

\subsection{Passage to the limit}

As a consequence of Lemma \ref{lem:Galerkin}, we deduce the following limits (up to a subsequence)
\begin{align*}
V_N &\rightharpoonup^* v \text{ in } L^2(\Omega;L^\infty(0,T;H) ),\\
V_N &\rightharpoonup v \text{ in } L^2(\Omega; L^2(0,T;H^1 )),\\
V_N &\rightharpoonup v\text{ in } L^{p_0+1}(\Omega; L^{p_0+1}(0,T;L^{p_0+1})),\\
\f(V_N+\xi )&\rightharpoonup \chi  \text{ in }  L^{q}(\Omega; L^{q}(0,T;L^{q})),\\
\eta_N &\rightharpoonup^* \eta \text{ in } L^2(\Omega;L^\infty(0,T;\Mzeromu)),\\
\eta_N &\rightharpoonup \eta \text{ in } L^2(\Omega; L^2(0,T;\Mzeromu )).
\end{align*}
Furthermore (see \cite[pg. 224]{robinson2001infinite}), we have
 $$P_N\f(V_N+\xi)\rightharpoonup \chi \text{ in } L^{q}(\Omega; L^{q}(0,T;L^{q})).$$
In particular, for all $\psi\in  H^1  \cap L^{p+1}$, it holds a.s. that
\begin{align*}
\la v(t),\psi\ra_H &= \la u_0,\psi\ra_H+ \int_0^t \la A^{1/2}v(r)+A^{1/2}\xi(r),A^{1/2}\psi\ra_H\d r +\int_0^t  \la \eta(r),\psi\ra_{\Mzeromu}\d r\\
&\quad\ +\int_0^t \la\chi(r),v\ra_H\d r .
\end{align*}
Also, by expression \eqref{form:eta(t)-representation}, observe that for all $\etatilde\in \Mzeromu$
\begin{align*}
\la \eta_N(t),\etatilde\ra_{\Mzeromu}& = \int_0^ t \int_0^s \la V_N(t-r)+P_N\xi(r)\d r, \etatilde(s)\ra_{H^1}\mu(s)\d s\\
&\quad +\int_t^\infty \la P_N\eta_0(s-t),\etatilde(s)\ra_{H^1}\mu(s)\d s\\
&\quad+ \int_t^\infty\close  \int_0^t\la V_N(t-r)+P_N\xi(r),\etatilde(s)\ra_{H^1}\d r\, \mu(s)\d s,
\end{align*}
which implies by taking limit as $n\to\infty$
\begin{align*}
\la \eta(t),\etatilde\ra_{\Mzeromu}& = \int_0^ t \int_0^s \la v(t-r)+\xi(r)\d r, \etatilde(s)\ra_{H^1}\mu(s)\d s\\
&\quad\ +\int_t^\infty \la \eta_0(s-t),\etatilde(s)\ra_{H^1}\mu(s)\d s\\
&\quad\ + \int_t^\infty\close  \int_0^t\la v(t-r)+\xi(r),\etatilde(s)\ra_{H^1}\d r\, \mu(s)\d s,
\end{align*}
by using for example Vitali convergence theorem. Since the above identity holds for all $\etatilde$, $\eta(t)$ indeed satisfies expression \eqref{form:eta(t)-representation}, which is equivalent to equation \eqref{eqn:eta:Cauchy-problem}, i.e.,
\begin{align*}
\bdy_t \eta(t)=\Tcal_\mu \eta(t)+v(t)+\xi(t).
\end{align*}

It remains to prove that a.s., $\chi(t) = \f(v(t)+\xi(t)) $, a.e. $t\in [0,T]$. To this end, fix $m_*$ large such that $H^{m_*}\subset L^{p_0+1}$. For $\psi\in H^{m_*}$, we observe that
\begin{align*}
\la \partial_t V_N(t),\psi\ra_H &= \la P_N u_0,\psi\ra_H+ \int_0^t \la A^{1/2}V_N(r)+A^{1/2}\xi(r),A^{1/2}\psi\ra_H\d r\\
&\quad\ +\int_0^t  \la \eta(r),\psi\ra_{\Mzeromu}\d r+\int_0^t \la P_N\f(V_N(r)+\xi(r)),\psi\ra_H\d r .
\end{align*}
In view of~\eqref{ineq:Psi_0(U_N,eta^n):Galerkin:c}--\eqref{ineq:|f(U_N)+xi|_(L^q)}, we see that $\partial_t V_N$ is uniformly bounded in $L^q(0,T;H^{-m_*})$ where $q=(p_0+1)/p_0$. By \cite[Theorem 8.1]{robinson2001infinite}, we obtain the strong convergence (up to a subsequence)
\begin{align*}
V_N &\to v \text{ in } L^2(0,T;H) .
\end{align*}
It follows that, up to a subsequence, $\f(V_N+\xi)$ converges to $\f(v+\xi)$ a.e. $(x,t)\in \domain\times [0,T]$ since $\f$ is continuous. Together with~\cite[Lemma 8.3]{robinson2001infinite}, we deduce
$$\f(V_N+\xi )\rightharpoonup \f(v+\xi)  \text{ in }   L^{q}(0,T;L^{q}),$$
whence a.s., $\chi=\f(u)$ a.e. $(x,t)\in \domain\times [0,T]$. This finishes the construction of a solution $(v,\eta)$ for~\eqref{eqn:shift}, thereby proving the existence of solutions $(u, \eta)$ for~\eqref{eqn:react-diff:mu}.

The uniqueness of such $(u,\eta)$ as well as their continuity with respect to initial conditions can be derived using an argument similarly to the above energy estimates~\eqref{ineq:Psi_0(U_N,eta^n):Galerkin:c}--\eqref{ineq:|f(U_N)+xi|_(L^q)}, and thus are omitted.

\section{Auxiliary results}\label{sec:auxiliary-result}

\begin{lemma}\label{lem:A^(n)phi(u)}
Suppose for $n\ge 1$, $\f\in C^{2n}(\rbb)$ and $u\in H^{2n}$. Then,
\begin{align} \label{form:A^(n)phi(u)}
\lap^n\f(u)& = \f'(u)\lap^n u+ c_n\f''(u)\grad u\cdot \grad \lap^{n-1}u +\f^{(2n)}(u)|\grad u|^{2n} \notag \\
&\quad\ +\sum_{i=2}^{2n-1}\f^{(i)}(u)\sum_{j=1}^{m_{i}}I_{ij}(u),
\end{align}
where $c_n>0$, $I_{ij}(u)=c_{ij}\prod_{k=1}^{m_{ij}}(D^{\alpha_{ijk}}u)^{n_{ijk}}$ with $c_{ij}>0$, $|\alpha_{ijk}|\le 2n-2$, and $\alpha_{ijk}\in \zbb^+$.
\end{lemma}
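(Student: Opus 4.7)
The proof will proceed by induction on $n \ge 1$. For the base case $n=1$, a direct computation using the product and chain rules yields
\[
\lap \f(u) = \grad \cdot \big(\f'(u)\grad u\big) = \f'(u)\lap u + \f''(u)|\grad u|^2,
\]
which matches~\eqref{form:A^(n)phi(u)} for $n=1$: the sum $\sum_{i=2}^{1}$ is empty, and the two structurally distinct terms $c_1\f''(u)\grad u\cdot\grad u$ and $\f^{(2)}(u)|\grad u|^2$ coincide and together give the single coefficient of $\f''(u)|\grad u|^2$.

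For the inductive step, I will assume~\eqref{form:A^(n)phi(u)} holds at level $n$ and apply $\lap$ to both sides, repeatedly invoking the product rule $\lap(fg) = (\lap f)g + 2\grad f\cdot \grad g + f\lap g$ together with the chain rules $\grad \f^{(k)}(u)=\f^{(k+1)}(u)\grad u$ and $\lap \f^{(k)}(u)=\f^{(k+2)}(u)|\grad u|^2+\f^{(k+1)}(u)\lap u$. Applying these to the leading term of~\eqref{form:A^(n)phi(u)} produces
\[
\lap\big(\f'(u)\lap^n u\big) = \f'(u)\lap^{n+1}u + 2\f''(u)\grad u\cdot\grad\lap^n u + \big(\f'''(u)|\grad u|^2 + \f''(u)\lap u\big)\lap^n u,
\]
which supplies both the new leading term $\f'(u)\lap^{n+1}u$ and the principal contribution to the distinguished $\f''(u)\grad u\cdot\grad\lap^n u$ term at the next level. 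Applying $\lap$ to the cross term $c_n\f''(u)\grad u\cdot\grad\lap^{n-1}u$ yields a further $\f''(u)\grad u\cdot\grad\lap^n u$ contribution via the factor $\grad u\cdot\lap(\grad\lap^{n-1}u)$, while differentiating the pure power $\f^{(2n)}(u)|\grad u|^{2n}$ produces the new top-order nonlinear term $\f^{(2n+2)}(u)|\grad u|^{2n+2}$ through applying the chain rule for $\lap\f^{(2n)}(u)$. All remaining contributions collect into the remainder sum at level $n+1$.

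The main technical hurdle is the combinatorial bookkeeping required to verify that every residual term generated by $\lap$ fits the structural form $\sum_{i=2}^{2n+1}\f^{(i)}(u)\sum_j I_{ij}(u)$, where each $I_{ij}(u)=c_{ij}\prod_k (D^{\alpha_{ijk}}u)^{n_{ijk}}$ must satisfy the derivative-order bound $|\alpha_{ijk}|\le 2(n+1)-2 = 2n$. This bound propagates automatically since $\lap$ raises the order of any single multi-index by at most $2$, so the inductive hypothesis $|\alpha_{ijk}|\le 2n-2$ at level $n$ translates to $|\alpha_{ijk}|\le 2n$ at level $n+1$. I expect the remainder of the argument to be a routine but somewhat tedious classification of terms according to the highest-order derivatives of $\f$ and $u$ they carry, isolating the three distinguished terms and folding everything else into the sum.
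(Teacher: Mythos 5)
Your proposal is correct and follows essentially the same route as the paper: induction on $n$, applying $\lap$ term by term to the level-$n$ identity via the product and chain rules, isolating the three distinguished terms, and observing that the derivative-order bound propagates because $\lap$ raises the order of any factor by at most two. The only cosmetic difference is that the paper also computes $n=2$ explicitly as a second base case (since the distinguished terms degenerate at $n=1$, as you note), whereas you fold that into the general inductive step.
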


\begin{proof}
We proceed with induction on $n$ and start with the base cases $n=1$ and $n=2$. A straightforward calculation yields
\begin{align*}
A\f(u) = \f'(u)Au+\f''(u)|\grad u|^2,
\end{align*}
and
\begin{align*}
A^2\f(u)&=\f'(u) \lap^2 u+4 \f''(u)\grad u\cdot\grad\lap u+\f^{(4)}(u)|\grad u|^4\\
&\quad\ + \f''(u)\big[(\lap u)^2+2\sum_{i,j=1}^d (\partial_{ij}u)^2\big]+\f^{(3)}(u)\big[2|\grad u|^2\lap u+4\grad u\cdot \grad^2 u\cdot\grad u \big],
\end{align*}
which confirms \eqref{form:A^(n)phi(u)} for the base cases.

Suppose \eqref{form:A^(n)phi(u)} holds for up to $n\ge 2$ and consider the case $n+1$. A computation yields
\begin{align*}
&\lap\big(\f'(u)\lap^n u\big)  = \f'(u)\lap^{n+1}u+2\f''(u)\grad u\cdot\grad\lap^n u+\f^{(3)}(u)|\grad u|^2\lap^n u,
\\
&\lap \big(\f''(u)\grad u\cdot \grad\lap^{n-1}u\big)\\
&=\f''(u)\grad u\cdot\grad\lap^n u+ \f''(u)\big[\grad\lap\cdot\grad\lap^{n-1}u+2\sum_{i,j=1}^d\partial_{ij}u\partial_{ij}\lap^{n-1}u\big]\\
&\quad\ \f^{(3)}(u)\big[2\grad u\cdot \grad^2 u\cdot \grad\lap^{n-2}u+2\grad u\cdot \grad^2\lap^{n-1} u\cdot \grad u+\lap u\grad u\cdot\grad\lap^{n-1}u\big],
\end{align*}
and
\begin{align*}
&\lap\big( \f^{(2n)}(u)|\grad u|^{2n} \big)\\
& =\f^{(2n+2)}(u)|\grad u|^{2n+2}+\f^{2n+1}(u)\big[ |\grad u|^{2n}\lap u+|\grad u|^{2n}\grad u\cdot\grad^2 u\cdot\grad u\\
&\quad\ +2n\grad u\cdot\grad^2 u\cdot\grad u\big]\\
&\quad\ +2n\f^{(2n)}(u)\big[|\grad u|^{2n-2}\sum_{i,j=1}^d(\partial_{ij}u)^2 +|\grad u|^{2n-2}\grad u\cdot \grad\lap u\\
&\quad\ +(n-1)|\grad u|^{2n-4}|\grad u\cdot\grad^2 u|^2\big].
\end{align*}
Similarly, concerning the term $\f^{(i)}(u)I_{ij}(u)$ on the right-hand side of \eqref{form:A^(n)phi(u)}, we note that applying $\lap$ to $\f^{(i)}(u)I_{ij}(u)$ will yield a sum of terms having the form
\begin{align*}
\f^{(i')}(u)\prod_{k=1}^{m_{i'j'}}(D^{\alpha'_{ijk}}u)^{n'_{ijk}},
\end{align*}
where $2\le i'\le 2n+1$, $|\alpha_{i'j'k}|\le 2n$, since we are only taking at most two derivatives. We now combine with preceding identities to establish \eqref{form:A^(n)phi(u)} for $n+1$.
\end{proof}

\begin{lemma} \label{lem:phi(x)<x+x^p}
Under \nameref{cond:phi}--\nameref{cond:phi:3}, there exists a constant $C_\f>0$ such that
\begin{align*}
|\f(x)|\le C_\f(|x|+|x|^{p}),
\end{align*}
for all $x\in\rbb$.
\end{lemma}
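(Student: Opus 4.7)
The plan is to prove the bound by splitting the domain into the bounded region $|x|\le 1$ and the unbounded region $|x|>1$, and then combining the two pieces. Only \nameref{cond:phi} and \nameref{cond:phi:1} will actually be invoked; the dissipativity and one-sided derivative bounds \nameref{cond:phi:2}--\nameref{cond:phi:3} are not needed for this estimate.

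First, for $|x|\le 1$: since \nameref{cond:phi} gives $\f\in C^1(\rbb)$ with $\f(0)=0$, the mean value theorem yields some $\xi$ between $0$ and $x$ such that $\f(x)=\f'(\xi)x$, so
\begin{equation*}
|\f(x)|\le \Big(\sup_{|y|\le 1}|\f'(y)|\Big)|x|=:M_1|x|.
\end{equation*}
Here $M_1<\infty$ because $\f'$ is continuous on the compact set $[-1,1]$. This controls the ``near the origin'' behavior and crucially absorbs the constant term that appears in the bound from \nameref{cond:phi:1}, which would otherwise preclude a linear bound near $x=0$.

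Next, for $|x|>1$: applying \nameref{cond:phi:1} directly and using $|x|^{p_0}\ge 1$, one obtains
\begin{equation*}
|\f(x)|\le a_1(1+|x|^{p_0})\le 2a_1|x|^{p_0}.
\end{equation*}

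Finally, combine the two cases by taking $C_\f := \max\{M_1,2a_1\}$. In the statement the exponent is denoted $p$, which we identify with the exponent $p_0$ from \nameref{cond:phi:1}. Since $|\f(x)|\le M_1|x|\le C_\f(|x|+|x|^{p_0})$ on $|x|\le 1$ and $|\f(x)|\le 2a_1|x|^{p_0}\le C_\f(|x|+|x|^{p_0})$ on $|x|>1$, the conclusion follows for all $x\in\rbb$. There is no real obstacle here; the only subtlety is noticing that the $C^1$ regularity together with $\f(0)=0$ is precisely what is needed to upgrade the ``constant $+|x|^{p_0}$'' bound of \nameref{cond:phi:1} to the ``$|x|+|x|^{p_0}$'' bound claimed.
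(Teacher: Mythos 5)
Your proof is correct, and on the bounded region $|x|\le 1$ it takes a genuinely different (and more elementary) route than the paper. The paper handles $|x|\le 1$ by setting $c_\f=\max_{-1\le x\le 1}|\f(x)\f''(x)+(\f'(x))^2|$ and showing that $f(x)=2c_\f x^2-\f(x)^2$ is convex on $[-1,1]$ with a critical point at $0$, whence $f\ge f(0)=0$ and $|\f(x)|\le\sqrt{2c_\f}\,|x|$. You instead apply the mean value theorem to get $|\f(x)|\le\big(\sup_{|y|\le 1}|\f'(y)|\big)|x|$ directly from $\f(0)=0$. The two arguments deliver the same linear bound near the origin, but yours only uses the $C^1$ regularity actually guaranteed by \nameref{cond:phi}, whereas the paper's convexity computation invokes $\f''$ and therefore implicitly assumes $C^2$ smoothness that is not part of \nameref{cond:phi}--\nameref{cond:phi:3} (it only becomes available under \nameref{cond:phi:d=3} for $m\ge3$). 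The treatment of $|x|>1$ via \nameref{cond:phi:1} and the final combination of constants are identical in both proofs. Your observation that \nameref{cond:phi:2}--\nameref{cond:phi:3} are not needed is also accurate.
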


\begin{proof}
We first pick $c_\f =\max_{-1\le x\le 1}|\f(x)\f''(x)+(\f'(x))^2| $ and consider the function
\begin{align*}
f(x)=2c_\f x^2-\f(x)^2.
\end{align*}
Since $\f(0)=0$, it is straightforward to verify that 0 is a critical point of $f$ defined above. Furthermore, the the choice of $c_\f$ yields
\begin{align*}
f''(x)=4c_\f-2\f(x)\f''(x)-2(\f'(x))^2>0,
\end{align*}
all $|x|\le 1$. As a consequence, $f(x)\ge f(0)=0$, for all $|x|\le 1$. It follows that
\begin{align*}
|\f(x)|\le \sqrt{2c_\f}|x|.
\end{align*}
On the other hand, by \nameref{cond:phi:1}, we infer that
\begin{align*}
|\f(x)|\le a_1(|x|^p+1)\le 2a_1|x|^p,
\end{align*}
for all $|x|\ge 1$. Altogether,  setting $C_\f=\max\{2c_\f,2a_1\}$ produces the desired bound.
\end{proof}

\bibliographystyle{abbrv}
{\footnotesize\bibliography{react-diff}}

\begin{thebibliography}{10}

\bibitem{albeverio2008spde}
S.~Albeverio, F.~Flandoli, and Y.~G. Sinai.
\newblock {\em {SPDE in Hydrodynamics: Recent Progress and Prospects: Lectures
  given at the CIME Summer School held in Cetraro, Italy, August 29-September
  3, 2005}}.
\newblock Springer, 2008.

\bibitem{baeumer2015existence}
B.~Baeumer, M.~Geissert, and M.~Kov{\'a}cs.
\newblock {Existence, uniqueness and regularity for a class of semilinear
  stochastic Volterra equations with multiplicative noise}.
\newblock {\em J. Differ. Equ.}, 258(2):535--554, 2015.

\bibitem{bakhtin2005stationary}
Y.~Bakhtin and J.~C. Mattingly.
\newblock Stationary solutions of stochastic differential equations with memory
  and stochastic partial differential equations.
\newblock {\em Commun. Contemp. Math}, 7(05):553--582, 2005.

\bibitem{barbu1975nonlinear}
V.~Barbu.
\newblock {Nonlinear Volterra equations in a Hilbert space}.
\newblock {\em SIAM J. Math. Anal.}, 6(4):728--741, 1975.

\bibitem{barbu52nonlinear}
V.~Barbu.
\newblock {\em {Nonlinear Semigroups and Differential Equations in Banach
  Spaces}}.
\newblock Noordhoff, Leyden, The Netherlands, 1976.

\bibitem{barbu1979existence}
V.~Barbu.
\newblock {Existence for nonlinear Volterra equations in Hilbert spaces}.
\newblock {\em SIAM J. Math. Anal.}, 10(3):552--569, 1979.

\bibitem{bonaccorsi2012asymptotic}
S.~Bonaccorsi, G.~Da~Prato, and L.~Tubaro.
\newblock Asymptotic behavior of a class of nonlinear stochastic heat equations
  with memory effects.
\newblock {\em SIAM J. Math. Anal.}, 44(3):1562--1587, 2012.

\bibitem{bonaccorsi2004large}
S.~Bonaccorsi and M.~Fantozzi.
\newblock {Large deviation principle for semilinear stochastic Volterra
  equations}.
\newblock {\em Dyn. Syst. Appl.}, 13:203--220, 2004.

\bibitem{bonaccorsi2006infinite}
S.~Bonaccorsi and M.~Fantozzi.
\newblock {Infinite dimensional stochastic Volterra equations with dissipative
  nonlinearity}.
\newblock {\em Dyn. Syst. Appl.}, 15(3/4):465, 2006.

\bibitem{caraballo2007existence}
T.~Caraballo, I.~Chueshov, P.~Mar{\'\i}n-Rubio, and J.~Real.
\newblock Existence and asymptotic behaviour for stochastic heat equations with
  multiplicative noise in materials with memory.
\newblock {\em Discrete Contin. Dyn. Syst. - A}, 18(2\&3):253, 2007.

\bibitem{caraballo2008pullback}
T.~Caraballo, J.~Real, and I.~Chueshov.
\newblock Pullback attractors for stochastic heat equations in materials with
  memory.
\newblock {\em Discrete Contin. Dyn. Syst. - B}, 9(3\&4, May):525, 2008.

\bibitem{cerrai2020convergence}
S.~Cerrai and N.~Glatt-Holtz.
\newblock On the convergence of stationary solutions in the
  smoluchowski-kramers approximation of infinite dimensional systems.
\newblock {\em J. Funct. Anal.}, 278(8):108421, 2020.

\bibitem{clement1996some}
P.~Cl{\'e}ment and G.~Da~Prato.
\newblock {Some results on stochastic convolutions arising in Volterra
  equations perturbed by noise}.
\newblock {\em Atti Accad. Naz. Lincei Cl. Sci. Fis. Mat. Natur. Rend. Lincei
  Mat. Appl.}, 7(3):147--153, 1996.

\bibitem{clement1997white}
P.~Cl{\'e}ment and G.~Da~Prato.
\newblock White noise perturbation of the heat equation in materials with
  memory.
\newblock {\em Dynam. Systems Appl.}, 6:441--460, 1997.

\bibitem{clement1998white}
P.~Cl{\'e}ment, G.~Da~Prato, and J.~Pr{\"u}ss.
\newblock White noise perturbation of the equations of linear parabolic
  viscoelasticity.
\newblock {\em Rend. Istit. Mat. Univ. Trieste}, 29:207--220, 1998.

\bibitem{conti2005singular}
M.~Conti and V.~Pata.
\newblock Singular limit of dissipative hyperbolic equations with memory.
\newblock {\em Conference Publications}, 2005(Special):200--208, 2005.

\bibitem{conti2006singular}
M.~Conti, V.~Pata, and M.~Squassina.
\newblock Singular limit of differential systems with memory.
\newblock {\em Indiana Univ. Math. J.}, pages 169--215, 2006.

\bibitem{da2014stochastic}
G.~Da~Prato and J.~Zabczyk.
\newblock {\em {Stochastic Equations in Infinite Dimensions}}.
\newblock Cambridge University Press, 2014.

\bibitem{dafermos1970asymptotic}
C.~M. Dafermos.
\newblock Asymptotic stability in viscoelasticity.
\newblock {\em Arch. Ration. Mech. Anal.}, 37(4):297--308, 1970.

\bibitem{desch2011p}
W.~Desch and S.-O. Londen.
\newblock {An L p-theory for stochastic integral equations}.
\newblock {\em J. Evol. Equ.}, 11(2):287--317, 2011.

\bibitem{weinan2002gibbsian}
W.~E and D.~Liu.
\newblock {Gibbsian dynamics and invariant measures for stochastic dissipative
  PDEs}.
\newblock {\em J. Stat. Phys.}, 108(5-6):1125--1156, 2002.

\bibitem{weinan2001gibbsian}
W.~E, J.~C. Mattingly, and Y.~Sinai.
\newblock {Gibbsian Dynamics and Ergodicity for the Stochastically Forced
  Navier--Stokes Equation}.
\newblock {\em Commun. Math. Phys.}, 224(1):83--106, 2001.

\bibitem{engel2001one}
K.-J. Engel, R.~Nagel, and S.~Brendle.
\newblock {\em {One-parameter Semigroups for Linear Evolution Equations}},
  volume 194.
\newblock Springer, 2000.

\bibitem{gatti2005memory}
S.~Gatti, M.~Grasselli, A.~Miranville, and V.~Pata.
\newblock Memory relaxation of first order evolution equations.
\newblock {\em Nonlinearity}, 18(4):1859, 2005.

\bibitem{gatti2004exponential}
S.~Gatti, M.~Grasselli, and V.~Pata.
\newblock Exponential attractors for a phase-field model with memory and
  quadratic nonlinearities.
\newblock {\em Indiana Univ. Math. J.}, pages 719--753, 2004.

\bibitem{giorgi1999uniform}
C.~Giorgi, M.~Grasselli, and V.~Pata.
\newblock Uniform attractors for a phase-field model with memory and quadratic
  nonlinearity.
\newblock {\em Indiana Univ. Math. J.}, pages 1395--1445, 1999.

\bibitem{glatt2021long}
N.~Glatt-Holtz, V.~R. Martinez, and G.~H. Richards.
\newblock {On the long-time statistical behavior of smooth solutions of the
  weakly damped, stochastically-driven KdV equation}.
\newblock {\em to appear in Trans. Am. Math. Soc., arXiv preprint
  arXiv:2103.12942}, 2021.

\bibitem{glatt2017unique}
N.~Glatt-Holtz, J.~C. Mattingly, and G.~Richards.
\newblock On unique ergodicity in nonlinear stochastic partial differential
  equations.
\newblock {\em J. Stat. Phys.}, 166(3-4):618--649, 2017.

\bibitem{glatt2008stochastic}
N.~Glatt-Holtz and M.~Ziane.
\newblock The stochastic primitive equations in two space dimensions with
  multiplicative noise.
\newblock {\em Discrete Contin. Dyn. Syst. - B}, 10(4):801, 2008.

\bibitem{glatt2009strong}
N.~Glatt-Holtz and M.~Ziane.
\newblock {Strong pathwise solutions of the stochastic Navier-Stokes system}.
\newblock {\em Adv. Differ. Equ.}, 14(5/6):567--600, 2009.

\bibitem{glatt2020generalized}
N.~E. Glatt-Holtz, D.~P. Herzog, S.~A. McKinley, and H.~D. Nguyen.
\newblock {The generalized Langevin equation with power-law memory in a
  nonlinear potential well}.
\newblock {\em Nonlinearity}, 33(6):2820, 2020.

\bibitem{glatt2022short}
N.~E. Glatt-Holtz, V.~R. Martinez, and H.~D. Nguyen.
\newblock {The short memory limit for long time statistics in a stochastic
  Coleman-Gurtin model of heat conduction}.
\newblock {\em arXiv preprint arXiv:2212.05646}, 2022.

\bibitem{grasselli2002uniform}
M.~Grasselli and V.~Pata.
\newblock Uniform attractors of nonautonomous dynamical systems with memory.
\newblock In {\em {Evolution Equations, Semigroups and Functional Analysis}},
  pages 155--178. Springer, 2002.

\bibitem{hairer2008spectral}
M.~Hairer and J.~C. Mattingly.
\newblock {Spectral gaps in Wasserstein distances and the 2D stochastic
  Navier--Stokes equations}.
\newblock {\em Ann. Prob.}, 36(6):2050--2091, 2008.

\bibitem{hairer2011asymptotic}
M.~Hairer, J.~C. Mattingly, and M.~Scheutzow.
\newblock {Asymptotic coupling and a general form of Harris' theorem with
  applications to stochastic delay equations}.
\newblock {\em Probab. Theory Relat. Fields}, 149(1):223--259, 2011.

\bibitem{ito1964stationary}
K.~It{\^o} and M.~Nisio.
\newblock On stationary solutions of a stochastic differential equation.
\newblock {\em J. Math. Kyoto Univ.}, 4(3):1--75, 1964.

\bibitem{joseph1989heat}
D.~D. Joseph and L.~Preziosi.
\newblock {Heat waves}.
\newblock {\em Rev. Mod. Phys.}, 61(1):41, 1989.

\bibitem{joseph1990heat}
D.~D. Joseph and L.~Preziosi.
\newblock {Addendum to the paper: ``Heat waves" [Rev. Modern Phys. 61 (1989),
  no. 1, 41-73]}.
\newblock {\em Rev. Mod. Phys.}, 62:375--391, 1990.

\bibitem{karatzas2012brownian}
I.~Karatzas and S.~Shreve.
\newblock {\em {Brownian Motion and Stochastic Calculus}}, volume 113.
\newblock Springer Science \& Business Media, 2012.

\bibitem{li2019asymptotic}
L.~Li, J.~Shu, Q.~Bai, and H.~Li.
\newblock Asymptotic behavior of fractional stochastic heat equations in
  materials with memory.
\newblock {\em Appl. Anal.}, pages 1--22, 2019.

\bibitem{liu2017well}
L.~Liu and T.~Caraballo.
\newblock Well-posedness and dynamics of a fractional stochastic
  integro-differential equation.
\newblock {\em Physica D}, 355:45--57, 2017.

\bibitem{liu2019asymptotic}
Y.~Liu, W.~Liu, X.-G. Yang, and Y.~Zheng.
\newblock {Asymptotic behavior for 2D stochastic Navier-Stokes equations with
  memory in unbounded domains}.
\newblock {\em arXiv preprint arXiv:1903.07251}, 2019.

\bibitem{miller1974linear}
R.~Miller.
\newblock {Linear Volterra integrodifferential equations as semigroups}.
\newblock {\em Funkcial. Ekvac}, 17:39--55, 1974.

\bibitem{nguyen2018small}
H.~D. Nguyen.
\newblock {The small-mass limit and white-noise limit of an infinite
  dimensional generalized Langevin equation}.
\newblock {\em J. Stat. Phys.}, 173(2):411--437, 2018.

\bibitem{nguyen2022ergodicity}
H.~D. Nguyen.
\newblock Ergodicity of a nonlinear stochastic reaction--diffusion equation
  with memory.
\newblock {\em Stoch. Process. Their Appl.}, 155:147--179, 2023.

\bibitem{ottobre2011asymptotic}
M.~Ottobre and G.~A. Pavliotis.
\newblock {Asymptotic analysis for the generalized Langevin equation}.
\newblock {\em Nonlinearity}, 24(5):1629, 2011.

\bibitem{pata2001attractors}
V.~Pata and A.~Zucchi.
\newblock Attractors for a damped hyperbolic equation with linear memory.
\newblock {\em Adv. Math. Sci. Appl.}, 11:505--529, 2001.

\bibitem{pavliotis2014stochastic}
G.~A. Pavliotis.
\newblock {\em {Stochastic Processes and Applications: Diffusion Processes, the
  Fokker-Planck and Langevin Equations}}, volume~60.
\newblock Springer, 2014.

\bibitem{robinson2001infinite}
J.~C. Robinson.
\newblock {\em {Infinite-dimensional Dynamical Systems: an Introduction to
  Dissipative Parabolic PDEs and the Theory of Global Attractors}}, volume~28.
\newblock Cambridge University Press, 2001.

\bibitem{shangguan2024geometric}
D.~Shangguan, Q.~Zhang, J.~Hu, and X.~Li.
\newblock Geometric ergodicity of a stochastic reaction--diffusion tuberculosis
  model with varying immunity period.
\newblock {\em J. Nonlinear Sci.}, 34(6):114, 2024.

\bibitem{shu2020asymptotic}
J.~Shu, H.~Li, X.~Huang, and J.~Zhang.
\newblock Asymptotic behaviour of stochastic heat equations in materials with
  memory on thin domains.
\newblock {\em Dyn. Syst.}, pages 1--25, 2020.

\end{thebibliography}

\end{document}